\title[A stationary set method for oscillatory integrals]{A stationary set method\\ for estimating oscillatory integrals}
\author{Saugata Basu}
\address[SB]{Department of Mathematics\\ Purdue University\\ USA}
\email{sbasu@math.purdue.edu}
\author{Shaoming Guo}
\address[SG]{Department of Mathematics\\ University of Wisconsin Madison\\ USA}
\email{shaomingguo@math.wisc.edu}
\author{Ruixiang Zhang}
\address[RZ]{Department of Mathematics, University of California, Berkeley\\ USA}
\email{ruixiang@berkeley.edu}
\author{Pavel Zorin-Kranich}
\address[PZ]{Mathematical Institute\\ University of Bonn\\ Germany}
\email{pzorin@math.uni-bonn.de}
\subjclass[2020]{42B20 (Primary) 03C10 (Secondary)}
\def\R{\mathbb{R}}
\def\N{\mathbb{N}}
\def\C{\mathbb{C}}
\newcommand{\Part}[1]{\mathcal{P}(#1)}
\newcommand{\con}{\mathrm{Con}}
\newcommand{\dual}{\mathrm{Dual}}
\newcommand{\bxi}{\bar{\xi}}
\newcommand{\bdelta}{{\bf \Delta}}
\newcommand{\one}{\mathbf{1}}
\newcommand{\bfr}{{\bf r}}
\newcommand{\an}{\mathrm{an}}
\def\mc{\mathcal}
\def\lesim{\lesssim}
\def\PZdefchar#1{
	\expandafter\def\csname frak#1\endcsname{\mathfrak{#1}}
	\expandafter\def\csname bf#1\endcsname{\mathbf{#1}}
	\expandafter\def\csname scr#1\endcsname{\mathcal{#1}}
	\expandafter\def\csname cal#1\endcsname{\mathcal{#1}}}
\def\PZdefloop#1{\ifx#1\PZdefloop\else\PZdefchar#1\expandafter\PZdefloop\fi}
\DeclarePairedDelimiter\abs{\lvert}{\rvert}
\DeclarePairedDelimiter\norm{\lVert}{\rVert}
\DeclarePairedDelimiterX\inn[2]{\langle}{\rangle}{#1,#2}
\providecommand\given{}
\newcommand\SetSymbol[1][]{%
	\nonscript\:#1\vert
	\allowbreak
	\nonscript\:
	\mathopen{}}
\DeclarePairedDelimiterX\Set[1]\lbrace\rbrace{\renewcommand\given{\SetSymbol[\delimsize]}#1}
\newcommand{\extend}{E_k 1}
\newcommand{\gammax}{Z_x}
\newcommand{\qk}{q_k}
\numberwithin{equation}{section}
\theoremstyle{plain}
\newtheorem{thm}{Theorem}[section]
\newtheorem{prop}[thm]{Proposition}
\newtheorem{lem}[thm]{Lemma}
\newtheorem{lemma}[thm]{Lemma}
\newtheorem{cor}[thm]{Corollary}
\newtheorem{corollary}[thm]{Corollary}
\theoremstyle{definition}
\newtheorem*{conj*}{Conjecture}
\newtheorem*{openproblem*}{Open Problem}
\theoremstyle{remark}
\newtheorem{rem}[thm]{Remark}
\newtheorem{remark}[thm]{Remark}
\newtheorem{exam}[thm]{Example}
\newcommand {\hide}[1]{}
\begin{document}

\maketitle
\begin{abstract}
We propose a new  method of estimating oscillatory integrals, which we call a ``stationary set'' method.
We use it to obtain the sharp convergence exponents of Tarry's problems in dimension two for every degree $k\ge 2$.
As a consequence, we obtain sharp $L^{\infty} \to L^p$ Fourier extension estimates for a family of monomial surfaces.
\end{abstract}

\section{Statement of the main results}

Our main result, Theorem \ref{201004lem2_1_general}, gives upper bounds on certain oscillatory integrals of bounded complexity.
\begin{thm}[``Stationary set'' estimate]\label{201004lem2_1_general}
Let $d, k\ge 1$. Let $Q(\xi)$ be a bounded semialgebraic function in variables $\xi\in [0, 1]^d$ with complexity $\leq k$.
Then
\begin{equation}
\label{stationary_set_bound}
\Big|\int_{[0, 1]^d} e(Q(\xi))d\xi\Big|\lesim_{d, k} \sup_{\mu\in\R} \abs{\{\xi\in [0, 1]^d: \mu\le Q(\xi)\le \mu+1\}}.
\end{equation}
The implicit constant depends only on $d$ and $k$.
\end{thm}

Theorem~\ref{201004lem2_1_general} relies mainly on tools from real algebraic geometry and model theory, in particular, o-minimal geometry.

We will apply Theorem \ref{201004lem2_1_general} to obtain new and often sharp results on the convergence exponent of Tarry's problem  in dimension $2$ and $L^{\infty} \to L^p$ Fourier restriction for two-dimensional Parsell-Vinogradov surfaces.
These applications only require Theorem~\ref{201004lem2_1_general} with functions $Q$ being polynomials of bounded degrees, but our proof does not simplify in this special case.
The more general setting of semialgebraic functions may also be of independent interest.

\subsection{Sharpness of Theorem \ref{201004lem2_1_general} in an average sense.} The argument in this subsection was shared to us by Wright, who further attributed it to Stein in the real setting and to Hua (see \cite[page 15]{MR0194404}) in the $p$-adic setting. A similar form of it was used for instance in \cite{wright2020h}. Their argument also works for general valued fields, Archimedean or non-Archimedian. Here we only present this argument in the real setting. It shows that the upper bound \eqref{stationary_set_bound} is tight in an average sense (up to a log factor). Let us be more precise. Denote
\begin{equation}
L_Q:=\sup_{c\in \R} |\{\xi\in [0, 1]^d: c\le Q(\xi)\le c+1\}|
\end{equation}
and
\begin{equation}
I(s Q)=\int_{[0, 1]^d}e(sQ(\xi))d\xi.
\end{equation}
We will prove that
\begin{equation}\label{210828e1_3zz}
\int_{-1}^1 |I(sQ)|ds\lesim \int_{-1}^1 L_{sQ} ds\lesim (|\log \delta|+1) \int_{-1}^1 |I(sQ)|ds,
\end{equation}
where $Q$ is a bounded semialgebraic function of complexity $\le k$, $\delta:=L_Q\le 1$, and the implicit constants are allowed to depend on $d$ and $k$. \\

The first inequality in \eqref{210828e1_3zz} follows directly from Theorem \ref{201004lem2_1_general}. We only need to prove the second inequality. Let $g: [0, 1]^d\to \R$ be a measurable function. Let $\psi: \R\to \R$ be a Schwartz function satisfying $\mathbf{1}_{[0, 1]}(x)\le \psi(x)$ for every $x\in \R$ whose Fourier transform is compactly supported. 
We first show that
\begin{equation}\label{210828e1_3}
L_g\le \int_{\R} |\widehat{\psi}(s)| |I(sg)|ds.
\end{equation}
To see this, we write
\begin{equation}
|\{\xi\in [0, 1]^d: c\le g(\xi)\le c+1\}|=\int_{[0, 1]^d} \mathbf{1}_{[0, 1]}(g(\xi)-c)d\xi.
\end{equation}
By the definition of $\psi$, this is bounded by
\begin{equation}
\begin{split}
& \int_{[0, 1]^d} \psi(g(\xi)-c)d\xi=\int_{[0, 1]^d} \big[\int_{\R} \widehat{\psi}(s)e(s(g(\xi)-c))ds\big] d\xi
\end{split}
\end{equation}
The bound \eqref{210828e1_3} now follows from the triangle inequality. \\

%

Recall that $\delta=L_Q$. Assume that $\widehat{\psi}(s)$ is supported on $[-C, C]$. For every $j$ with $\delta\lesim 2^{-j}\le 1$, it holds that
\begin{equation}
\int_{|s|\sim 2^{-j}}  L_{sQ} ds \sim 2^{-j} L_{2^{-j}Q} \sim 2^{-j} L_{2^{-j}Q/C} \lesim 2^{-j} \int_{-C}^{C} |I(2^{-j}s Q/C)|ds \lesim \int_{-1}^{1} |I(sQ)|ds,
\end{equation}
where in the second last inequality we applied \eqref{210828e1_3}. This, combined with the trivial bound
\begin{equation}
\int_{|s|\le \delta} L_{sQ} ds\le \int_{|s|\le \delta} 1 ds\le 2\delta \lesim \int_{-1}^{1} |I(sQ)|ds,
\end{equation}
implies the second inequality in \eqref{210828e1_3zz}.

\subsection{Comparison to previously known tools for estimating oscillatory integrals}

A well-known method of estimating oscillatory integrals is through van der Corput estimates, which assume a lower bound for $D^{\beta} Q$ for some multi-index $\beta\in \N^d$.
This method is very efficient in dimension one, see for instance Stein \cite{MR1232192}, or Arkhipov, Chubarikov and Karatsuba \cite[Section 1.1]{MR552548} for a more quantitative, but perhaps lesser-known, formulation.
In \cite{MR1683156}, Carbery, Christ, and Wright established van der Corput estimates in dimensions greater than one with very mild assumptions on the relevant phase functions. We also refer to Wright \cite{MR2755683}, \cite{MR2784669} and \cite{wright2020h} for estimating multi-dimensional oscillatory integrals in settings other than $\R^d$ (for instance the $p$-adic setting).

The results in the literature that are perhaps closest to ours were obtained by Bruna, Nagel and Wainger \cite{MR932301} (see also Nagel, Seeger and Wainger \cite{MR1231151} for a related result).
To describe this result, let $S\subset \R^{n+1}$ be a smooth, compact hypersurface that is convex and of finite type.
Let $d\sigma$ be the surface measure on $S$.
For $x\in S$, let $\nu_x$ denote the outward unit normal to $S$ at $x$.
Bruna, Nagel, and Wainger \cite[Theorem B]{MR932301} proved that, for large $\lambda\gg 1$, we have
\begin{equation}
\label{eq:BNW}
\Bigl| \int_S e(\inn{y}{\lambda\nu_{x}}) d\sigma(y) \Bigr|
\lesim
\sigma \{y\in S: |\inn{y-x}{\lambda\nu_{x}}|<1 \}.
\end{equation}
The left-hand side of \eqref{eq:BNW} is the Fourier transform of the surface-carried measure $s$.
The neighborhood of $x$ on the right-hand side of \eqref{eq:BNW} plays a similar role to our stationary sets in Theorem~\ref{201004lem2_1_general}.

In a slightly different direction, Varchenko \cite{MR0422257} obtained asymptotic estimates for oscillatory integrals
\begin{equation}
\int_{\R^d} e(\lambda \varphi(x))\chi(x)dx,
\end{equation}
where $\lambda\in \R$ is a large parameter, $\varphi$ is an analytic phase function, and $\chi$ is a function supported near one point, say the origin. The asymptotic expansion involves terms in the Newton polyhedron of the phase function.  Instead of asking a ``global'' condition of the form $D^{\beta} \varphi\ge 1$ as in \cite{MR1683156}, Varchenko assumed the support of $\chi$ to be small enough; he used the resolution of singularities for analytic functions, and his results are more ``local''.
For further work along this line, we refer to Karpushkin \cite{MR585530} and \cite{MR731895}, Phong, Stein and Sturm \cite{MR1738409} and Greenblatt \cite{MR2587095}, and the references therein.
Operator-valued oscillatory integrals in a similar spirit have also been extensively studied. It is beyond our capability to give a review of even a small part of it, and therefore we refer interested readers to more classical works including Phong and Stein \cite{MR1484770} and Seeger, Sogge and Stein \cite{MR1127475}, and more recent works including Ikromov, Kempe and M\"uller \cite{MR2653054} and Ikromov and M\"uller \cite{MR3524103}.

\subsection{Applications of the stationary set estimate}

As an application of our stationary set estimate, we obtain the sharp convergence exponent of Tarry's problem in dimension two for an arbitrary degree. Let $k\ge 2$ and $\xi=(\xi_1, \dots, \xi_d)\in \R^d$ with $d\ge 1$. Set
\begin{equation}
N=N_{d, k}=\binom{d+k}{k}-1.
\end{equation}
For a multi-index $\beta=(\beta_1, \dots, \beta_d)\in \N_0^d$, denote $\xi^{\beta}=\xi_1^{\beta_1}\dots \xi_d^{\beta_d}$. Let $\phi_{d, k}(\xi)$ denote the vector $(\xi^{\beta})_{1\le |\beta|\le k}\in \R^N$.
Let $S_{d, k}$ denote the $d$-dimensional manifold
\begin{equation}
\{\phi_{d, k}(\xi): \xi\in [0, 1]^d\},
\end{equation}
which is sometimes referred to as a \emph{Parsell-Vinogradov manifold}.
For $W\subset [0, 1]^d$, define the \emph{Fourier extension operator} associated to $W$ to be
\begin{equation}\label{sharp_cutoff}
E^{(d, k)}_W f(x)=\int_W f(\xi) e(x\cdot \phi_{d, k}(\xi))d\xi, \ \ x\in \R^N.
\end{equation}
The convergence exponent of Tarry's problem is defined to be
\begin{equation}
\label{eq:convergence-exponent}
p_{d, k}:=\inf \{p: \|E_{[0, 1]^d}^{(d, k)} 1\|_{L^p(\R^{N})}<\infty\}.
\end{equation}
Theorem \ref{201004lem2_1_general} will serve as a main ingredient in the proof of the following theorem, in which we obtain the sharp convergence exponent of Tarry's problem in dimension two.
\begin{thm}\label{main_tarry}
For every $k\ge 2$, we have
\begin{equation}
p_{2, k}=\frac{1}{6} k(k+1)(k+2)+2.
\end{equation}
\end{thm}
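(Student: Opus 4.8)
\emph{Proof proposal.} The plan is to prove the two inequalities $p_{2,k}\le\frac16 k(k+1)(k+2)+2$ and $p_{2,k}\ge\frac16 k(k+1)(k+2)+2$ separately. Only the first uses Theorem~\ref{201004lem2_1_general}; the second is a lower-bound construction in the spirit of the classical analysis of Tarry's problem and does not use it.

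For the upper bound on $p_{2,k}$ one must show $\norm{E_{[0,1]^2}^{(2,k)}1}_{L^p(\R^N)}<\infty$ for every $p>\frac16 k(k+1)(k+2)+2$, where $N=N_{2,k}$. For $x\in\R^N$ write $P_x(\xi)=x\cdot\phi_{2,k}(\xi)=\sum_{1\le\abs\beta\le k}x_\beta\xi^\beta$; this is a polynomial of degree $\le k$ in two variables, hence a bounded semialgebraic function on $[0,1]^2$ of complexity $\le C(k)$ \emph{uniformly in $x$}. Applying Theorem~\ref{201004lem2_1_general} with $d=2$ and $Q=P_x$ gives
\begin{equation*}
\abs{E_{[0,1]^2}^{(2,k)}1(x)}=\Bigl|\int_{[0,1]^2}e(P_x(\xi))\,d\xi\Bigr|\lesim_k \sigma(x),\qquad \sigma(x):=\sup_{\mu\in\R}\abs{\Set{\xi\in[0,1]^2\given \mu\le P_x(\xi)\le\mu+1}}.
\end{equation*}
Since $0\le\sigma(x)\le1$, the layer-cake formula $\int_{\R^N}\sigma(x)^p\,dx=p\int_0^1 t^{p-1}\abs{\Set{x\given\sigma(x)>t}}\,dt$ reduces the claim to the sublevel-distribution bound
\begin{equation}\label{eq:sublevel-distribution}
\abs{\Set{x\in\R^N\given \sigma(x)>t}}\lesim_{k,\epsilon} t^{-\frac16 k(k+1)(k+2)-2-\epsilon}\qquad(0<t<1)
\end{equation}
for every $\epsilon>0$, which then yields finiteness of the integral as soon as $p>\frac16 k(k+1)(k+2)+2$.

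Estimate~\eqref{eq:sublevel-distribution} is a purely real-algebraic statement about the distribution, over the coefficient space $\R^N$, of the measure of the largest unit-width level slab of a bivariate polynomial of degree $\le k$. I would prove it by decomposing $\R^N$ into $O_k(1)$ semialgebraic pieces — equivalently, stratifying according to the ``shape'' of $P_x$ (its Newton data, the rank and location of the critical points of $P_x$ and of its restrictions to the edges of $[0,1]^2$, and so on), a decomposition of bounded complexity by o-minimality — and, on each piece, dyadically in the sizes $\abs{x_\beta}\sim\lambda_\beta$ of the coefficients. On a dyadic block one renormalises: if $\sigma(x)>t$, then after a translation and an anisotropic rescaling of $\xi$ (and, on the relevant stratum, a linear change of variables) the polynomial $P_x$ is within $O(1)$ of a constant on a set of measure $\gtrsim t$, which forces a system of near-identities among the $x_\beta$ and drops the effective degree; iterating this reduction controls the measure of the block in terms of $t$, and summing the resulting geometric series produces exactly the exponent $\frac16 k(k+1)(k+2)+2=\tfrac12\sum_{1\le\abs\beta\le k}\abs\beta+2$ — the $\tfrac12\sum\abs\beta$ from balancing the anisotropic coefficient volume against the power of $t$, the $+2$ from the two-dimensional constancy region at the extremal scale. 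Carrying out this reduction uniformly over all degeneracy types, and verifying that no power of $t$ is lost, is the main obstacle; this is where the real-algebraic and model-theoretic input behind Theorem~\ref{201004lem2_1_general} re-enters.

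For the matching lower bound one exhibits the extremal configurations directly. For a small dyadic $\rho$ and a suitable choice of scales $c_\beta$, let $\mathcal B_\rho\subset\R^N$ be the anisotropic box $\{\,\abs{x_\beta}\sim\rho^{-c_\beta}\,\}$, chosen so that for $x\in\mathcal B_\rho$ the polynomial $P_x$, read in the rescaled variable $\xi\mapsto\rho\xi$ near a fixed base point, is a small perturbation of one fixed polynomial possessing a maximally degenerate critical point; a stationary-phase (Fresnel-type) computation then gives $\abs{E_{[0,1]^2}^{(2,k)}1(x)}\gtrsim\rho^{\,b}$ on a large portion of $\mathcal B_\rho$, with $\abs{\mathcal B_\rho}\sim\rho^{-a}$ and $a/b=\frac16 k(k+1)(k+2)+2$, whence $\int_{\R^N}\abs{E_{[0,1]^2}^{(2,k)}1}^p\gtrsim\sum_{\rho}\rho^{\,bp-a}=\infty$ for $p\le\frac16 k(k+1)(k+2)+2$. (For $k=2$ this is the familiar region in which the quadratic part of the phase has one eigenvalue of size $\rho^{-1}$ and one of size $O(1)$ while the linear part places the critical point inside the unit square; in general one takes the analogous maximally degenerate anisotropic profile.) Combining the two directions gives $p_{2,k}=\frac16 k(k+1)(k+2)+2$.
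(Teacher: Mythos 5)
Your reduction of the upper bound to the sublevel-distribution estimate $\abs{\Set{x\given\sigma(x)>t}}\lesssim t^{-q_k-\epsilon}$ (with $q_k=\frac16k(k+1)(k+2)+2$) is exactly the paper's first step, but the estimate itself is where essentially all of the work lies, and your proposal does not prove it. ``Stratify by degeneracy type, renormalise, and iterate'' is a plausible-sounding heuristic, but you give no mechanism that actually produces the exponent $q_k$ rather than, say, $\frac16k(k+1)(k+2)$, and you concede that carrying it out uniformly ``is the main obstacle.'' The paper's argument is quite different in structure and genuinely delicate: it runs an induction on the scale $\delta$ via a concentrated/spread-out dichotomy with respect to vertical $1/K$-strips. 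The concentrated case is handled by anisotropic rescaling (giving the recursion $\abs{L_\delta}\le K^{1+k(k+1)(k+2)/6}\abs{L_{K\delta/C_{k,1}}}+\dots$, whose closure requires the first exponent to be strictly less than $q_k$). The spread-out case requires showing that the stationary set has a long projection onto the $\xi_1$-axis (Tarski--Seidenberg plus a boundary-count lemma), a rigidity statement via Lagrange interpolation placing $x$ in the dual box of $\con(\phi_k(\bdelta))$ for a tuple $\bdelta$ of $\delta$-squares above $k+1$ separated abscissas, and finally a careful lower bound on $\abs{\con(\phi_k(\bdelta))}$ via Vandermonde determinants, with a dyadic pigeonholing in the determinant sizes that produces the $\abs{\log_+\delta}^{k-1}$ loss. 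None of this is recoverable from your sketch, so the upper bound is not established.

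The lower bound as you state it also falls short, and by exactly the critical $+2$. A single anisotropic coefficient box $\mathcal B_\rho=\{\abs{x_\beta}\sim\rho^{-c_\beta}\}$ on which $\abs{E_k1}\gtrsim\rho$ can have volume at most $\sim\rho^{-k(k+1)(k+2)/6}$ (take $c_\beta=\beta_1$: the phase must be $O(1)$ on a $\rho\times 1$ strip), which only yields divergence for $p\le\frac16k(k+1)(k+2)$. To reach $q_k$ the paper uses a \emph{two-parameter family} of $\sim\lambda^2$ regions $\Omega_{\bfr}$, indexed by a shear $\theta_r$ and a translation $w_{r'}$ of the strip, each of volume $\gtrsim\lambda^{k(k+1)(k+2)/6}$ and each supporting $\abs{E_k1}\gtrsim\lambda^{-1}$; the extra factor $\lambda^2$ is precisely the $+2$. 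Moreover the pairwise disjointness of these regions is not automatic --- it is a separate lemma, proved by exhibiting a coefficient of the difference of the two expansions of size $\gtrsim\lambda^{k-1}$ --- and this use of the shear parameter is what improves Ikromov's $+1$ to $+2$. Your proposal asserts $a/b=q_k$ for a single box without justification, and the dimensional analysis above shows it cannot hold; at minimum you must incorporate the disjoint union over translations and shears and prove the disjointness.
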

The quantity $\|E_{[0, 1]}^{(1, k)} 1\|_{L^p(\R^{N})}$ appears in the leading coefficient of the asymptotic expansion of Vinogradov's mean value, as was first shown by Hua for large exponents $p$ \cite[Chapter 10.3]{MR0029416}, and in the full range of $p$ in \cite[Corollary 1.3]{MR3938716}.
The problem of determining the convergence exponent \eqref{eq:convergence-exponent} goes back to Hua's works \cite{MR0064806,Hua}.
In our notation, Hua proved that $p_{1, k}\le k^2/2+k$, and posed the problem of determining $p_{1,k}$.
This problem was resolved by Arkhipov, Chubarikov and Karatsuba \cite{MR552548} (see also \cite[Theorem 1.3]{MR2113479} in their book), where they proved that
\begin{equation}\label{eq:one_tarry}
p_{1, k}=(k^2+k)/2+1
\end{equation}
for every $k$.
For the connection to the original Tarry problem, we refer to \cite[Section 13]{MR3938716}.


In the same paper \cite{MR552548}, the authors also studied Tarry's problem in higher dimensions, and obtained that
\begin{equation}\label{201219e1_9}
p_{d, k}\le d\binom{k+d}{d+1}+d,
\end{equation}
for every $d, k\ge 2$.
At the end of their paper, they also stated as an open problem to find the exact value of $p_{d, k}$ in higher dimensions.
Later, Ikromov \cite{MR1636721} obtained the lower bound
\begin{equation}
p_{d, k}\ge \binom{k+d}{d+1}+1.
\end{equation}
In the quadratic case, that is, the case $k=2$, the problem was resolved by Mockenhaupt \cite{Mockenhaupt}, who proved that $p_{d, 2}=2(d+1)$ for every $d\ge 2$.

For more partial results and for results on other systems of monomials, we refer to Chakhkiev \cite{MR1972998}, Ikromov and Safarov \cite{MR3496236}, Safarov \cite{MR3916135}, and Dzhabbarov \cite{MR3920413,MR4092565}.
\begin{remark}
In \cite{MR4092565}, Dzhabbarov considered the case $d=2, k=3$ and claimed that $p_{2, 3}\le 12.$ The strategy in that paper is to show that $E_{[0, 1]^2}^{(2, 3)}1\in L^{12}(\R^9)$.  To bound the $L^{12}$ norm, the author there expanded the 12-th power of $E_{[0, 1]^2}^{(2, 3)}1$ and applied some delicate change of variables, which further required some careful computations of Jacobians.
Although this approach looks very natural, our results below (see Section \ref{210216section5}, in particular, Remark \ref{210216rem}) indicates that $E_{[0, 1]^2}^{(2, 3)}1$ narrowly misses being $L^{12}$ integrable. Indeed,
\begin{equation}
\int_{B_R}|E_{[0, 1]^2}^{(2, 3)}1|^{12} \gtrsim \log R,
\end{equation}
for every large $R$ where  $B_R\subset \R^9$ is the ball of radius $R$ centered at the origin.
\end{remark}

As a consequence of Theorem \ref{main_tarry}, we obtain the following Fourier restriction estimates.
\begin{cor}\label{coro1_3}
Let $d=2$. For $k\ge 2$, let $\bar{p}_{d, k}$ denote the smallest even number that is greater than or equal to $p_{d, k}$. Then
\begin{equation}\label{201219e1_11}
\|E_{[0, 1]^d}^{(d, k)} f\|_p \lesim_{k, p} \|f\|_{\infty},
\end{equation}
for every $p>\bar{p}_{d, k}$. Moreover, when $p_{d, k}$ is even, which is the same as saying $k\not\equiv 1 \mod 4$, the above estimate is sharp in the sense that it fails for every $p\le \bar{p}_{d, k}$.
\end{cor}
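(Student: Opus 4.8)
The plan is to derive \eqref{201219e1_11} from Theorem~\ref{main_tarry} by a tensor power computation — using interpolation when $p_{d,k}$ is odd and Hausdorff--Young when $p_{d,k}$ is even — and to derive the sharpness by testing \eqref{201219e1_11} against $f\equiv1$. For $W\subseteq[0,1]^d$ set $\mu_f:=(\phi_{d,k})_*(f\,d\xi|_W)$, so that $E^{(d,k)}_Wf$ is, up to a reflection, the Fourier transform of $\mu_f$, and $(E^{(d,k)}_Wf)^m$ the Fourier transform of $\mu_f^{*m}$. For $2m>N$ the measure $\mu_f^{*m}$ is absolutely continuous — the $m$-fold sumset of $S_{d,k}$ being then full-dimensional — with density $g_{f,m}$ satisfying the pointwise bound $\abs{g_{f,m}}\le\norm{f}_\infty^m\,g_{1,m}$, $g_{1,m}$ being the density of $\mu_1^{*m}$, because $\abs{\mu_f}\le\norm{f}_\infty\mu_1$ as measures. (One has $p_{d,k}>N$ for $d=2$, $k\ge2$, so all values of $m$ below satisfy $2m>N$.)

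When $p_{d,k}$ is odd, $\bar p_{d,k}=p_{d,k}+1$ is an even integer exceeding $p_{d,k}$; writing $\bar p_{d,k}=2n$, Plancherel together with the density bound gives $\norm{E^{(d,k)}_Wf}_{L^{2n}(\R^N)}\le\norm{f}_\infty\,\norm{E^{(d,k)}_W1}_{L^{2n}(\R^N)}$, whose right-hand side with $W=[0,1]^d$ is finite by Theorem~\ref{main_tarry}, as $\bar p_{d,k}>p_{d,k}$. Combining this with the trivial bound $\norm{E^{(d,k)}_Wf}_{L^\infty}\le\abs{W}\norm{f}_\infty\le\norm{f}_\infty$ and log-convexity of $L^p$-norms yields \eqref{201219e1_11} for every $p\ge\bar p_{d,k}$, which is more than claimed.

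When $p_{d,k}=2n$ is even, the same argument only reaches $p\ge p_{d,k}+2$ (the tensor power bound at $p_{d,k}$ itself being vacuous, since $\norm{E^{(d,k)}_{[0,1]^d}1}_{L^{p_{d,k}}}=\infty$), so the band $p_{d,k}<p<p_{d,k}+2$ has to be reached differently. Here I would apply Hausdorff--Young in place of Plancherel: for $q\in[1,2)$,
\[
\norm{E^{(d,k)}_{[0,1]^d}f}_{L^{nq'}(\R^N)}^{n}=\norm{(E^{(d,k)}_{[0,1]^d}f)^n}_{L^{q'}(\R^N)}=\norm{\widehat{g_{f,n}}}_{L^{q'}(\R^N)}\le C_q\norm{g_{f,n}}_{L^q}\le C_q\norm{f}_\infty^n\norm{g_{1,n}}_{L^q},
\]
and as $q$ runs through $[1,2)$ the exponent $nq'$ runs through $(p_{d,k},\infty]$; so everything comes down to showing that $g_{1,n}\in L^q(\R^N)$ for every $q\in[1,2)$, with $\norm{g_{1,n}}_{L^q}\lesim_{q,k}1$. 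This is a quantitative integrability statement about the fiber-volume density of $n$-fold sums on the two-dimensional Parsell--Vinogradov surface at the critical level $2n=p_{d,k}$; its natural proof passes through an upper bound $\int_{B_R}\abs{E^{(d,k)}_{[0,1]^d}1}^{p_{d,k}}\lesim_{d,k}(\log R)^{O(1)}$ — the matching upper bound to the lower bound of Remark~\ref{210216rem} — which is exactly the sort of control over sublevel sets of semialgebraic functions that the stationary set estimate, Theorem~\ref{201004lem2_1_general}, is built to supply, and which I would expect to be established in Section~\ref{210216section5}. I expect this to be the main obstacle: the tensor power manipulation and the two elementary tools (Plancherel or Hausdorff--Young, and log-convexity) are routine, whereas pinning down the precise strength of the singularity of $g_{1,n}$ requires the quantitative machinery.

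For the sharpness, applying \eqref{201219e1_11} to $f\equiv1$ forces $\norm{E^{(d,k)}_{[0,1]^d}1}_{L^p(\R^N)}<\infty$, so by the definition \eqref{eq:convergence-exponent} and Theorem~\ref{main_tarry} one must have $p\ge p_{d,k}$; this rules out every $p<p_{d,k}=\bar p_{d,k}$ when $p_{d,k}$ is even. For the endpoint $p=p_{d,k}=2n$ one has $\norm{E^{(d,k)}_{[0,1]^d}1}_{L^{p_{d,k}}}^{2n}=\norm{g_{1,n}}_{L^2}^2=\int_{\R^N}g_{1,n}^2$, which is infinite because $g_{1,n}$ has a singularity at precisely the borderline-$L^2$ rate near the image of the most degenerate stationary configuration — equivalently, because $\int_{B_R}\abs{E^{(d,k)}_{[0,1]^d}1}^{p_{d,k}}\gtrsim\log R$, the general-$k$ analogue of Remark~\ref{210216rem}. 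Finally, recasting ``$p_{d,k}$ even'' as a congruence on $k$ modulo $4$ is an elementary parity computation.
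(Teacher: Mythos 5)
Your argument for the case where $p_{d,k}$ is odd (global Plancherel applied to $(E^{(d,k)}_{[0,1]^d}f)^{n}=\widehat{g_{f,n}}$, the domination $\abs{g_{f,n}}\le\norm{f}_\infty^n g_{1,n}$ coming from $\abs{\mu_f}\le\norm{f}_\infty\mu_1$, then interpolation with the trivial $L^\infty$ bound) is correct, and so is the sharpness part: failure for $p<p_{d,k}$ follows from testing on $f\equiv 1$, and failure at the endpoint $p=p_{d,k}$ (even case) is exactly what Lemmas~\ref{210104lem6_1} and~\ref{210104lem6_2} give via Remark~\ref{210216rem}. The problem is the even case, which is the only case where anything beyond the standard Mockenhaupt tensor-power argument is needed, and there your proposal has a genuine gap. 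You reduce the range $p_{d,k}<p<p_{d,k}+2$ to the claim that $g_{1,n}\in L^q(\R^N)$ for every $q\in[1,2)$, and assert that this should follow from the logarithmic local bound $\int_{B_R}\abs{E^{(d,k)}_{[0,1]^d}1}^{p_{d,k}}\lesssim(\log R)^{O(1)}$. It does not: that bound says $\widehat{g_{1,n}}=(E^{(d,k)}_{[0,1]^d}1)^n$ lies in $L^2(B_R)$ with slowly growing norm (equivalently, by the distributional estimate \eqref{eq:Ldelta-est}, in weak $L^2$ up to logarithms), and Hausdorff--Young cannot be run backwards to convert information about $\widehat{g_{1,n}}$ into $L^q$ membership of $g_{1,n}$ for $q>1$. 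Even the absolute continuity of $\mu_1^{*n}$ at the critical power $2n=p_{d,k}$ is not free, since $(E^{(d,k)}_{[0,1]^d}1)^n\notin L^2$ there. So the statement $g_{1,n}\in\bigcap_{q<2}L^q$ is an unproved (if plausible) structural claim about the $n$-fold convolution density that would require its own analysis of the singularities of the sumset map; nothing in the paper supplies it.

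The paper closes this band by a different and more robust mechanism: it only ever proves \emph{local} estimates. From Theorem~\ref{main_tarry} and H\"older one gets $\norm{E^{(2,k)}_{[0,1]^2}1}_{L^{\bar p_k}(B_R)}\lesssim_\epsilon R^\epsilon$; this is transferred to general $f$ with $\norm{f}_\infty=1$ by a local version of your Plancherel step, multiplying by $\widehat{\chi_R}$ for a bump $\chi$ with $\widehat{\chi}\ge 0$ and $\widehat{\chi}\gtrsim 1$ on the unit ball, so that $\norm{(\widehat{\mu}\,\widehat{\chi_R})^{\bar p_k/2}}_2=\norm{(\mu*\chi_R)^{*\bar p_k/2}}_2\le\norm{(\mu_0*\chi_R)^{*\bar p_k/2}}_2$ by positivity. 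This yields $\norm{E^{(2,k)}_{[0,1]^2}f}_{L^{\bar p_k}(B_R)}\lesssim_\epsilon R^\epsilon\norm{f}_\infty$, and the passage from such local estimates with $R^\epsilon$ loss to the global estimate \eqref{201219e1_11} for every $p>\bar p_{d,k}$ is Tao's epsilon-removal lemma (and its variants in Bourgain--Guth and Kim), which is the ingredient missing from your outline. If you want to keep your global Hausdorff--Young route, you would have to actually prove the $L^q$ integrability of $g_{1,n}$ for all $q<2$, which is a substantial new task rather than a routine consequence of the material in Sections~3--5.
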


When $k\equiv 3 \mod 4$, it is reasonable to believe that \eqref{201219e1_11} also holds for every $p>p_{2, k}$. Here our result covers the range $p>p_{2, k}+1$.

Recall that Theorem \ref{main_tarry} states that \eqref{201219e1_11} holds with $f=1$ whenever $p>p_{2, k}$. That such an estimate implies \eqref{201219e1_11} for a general $f\in L^{\infty}$ for an even $p$ is standard, and is exactly how Mockenhaupt \cite{Mockenhaupt} proved \eqref{201219e1_11} for every $p>p_{d, 2}$ and every $d\ge 2$. In the case of Mockenhaupt, the exponent $p_{d, 2}=2(d+1)$ is always even, which allows him to obtain sharp estimates in $p$. In our case, our exponent $p_{2, k}$ may be odd, and by doing this reduction we may be off the sharp exponent of $p$ by 1.

Because of its applications to areas like partial differential equations, analytic number theory, and combinatorics, the Fourier restriction theory has received significant amount of attentions in the past few decades.
A number of methods, including the bilinear method (see for instance Tao, Vargas and Vega \cite{MR1625056}, Wolff \cite{MR1836285} and Tao \cite{MR2033842}), the multilinear method (see for instance Bennett, Carbery and Tao \cite{MR2275834} and Guth \cite{MR2746348}), the broad-narrow analysis of Bourgain and Guth \cite{MR2860188} and the polynomial method (see for instance Guth \cite{MR3454378}), have been introduced to study the Fourier restriction phenomenon. Perhaps it is not exaggerating to say that each of the above mentioned papers generated an area of active research. Our method in the current paper seems to be disjoint from the above mentioned ones, and it would be very interesting to see how they can be connected.

\subsection{A few remarks on Theorem \ref{main_tarry} and Corollary \ref{coro1_3}}

Let us first remark on the methods of proof of Theorem \ref{main_tarry}, and compare with those in the literature.

First of all, in the quadratic case $k=2$ in \cite{Mockenhaupt}, in order to bound the $L^p$ norm of $E_{[0, 1]^d}^{(d, 2)} 1$, Mockenhaupt computed the function $E_{[0, 1]^d}^{(d, 2)} 1$ directly. To be more precise, he first smoothed out the function $\one_{[0, 1]^d}$ to $e^{-|\xi|^2/2}$, took the advantage of having a quadratic phase function and computed a Fourier transform directly (roughly speaking via completing squares). It seems difficult to generalize this approach to surfaces of cubic degree or higher.
The result of Mockenhaupt \cite{Mockenhaupt} has recently been recovered in \cite{guo2020decoupling} via the broad-narrow analysis of Bourgain and Guth and decoupling inequalities for quadratic forms, and the approach there does not work either for degrees $k\ge 3$.

Next, we turn to the upper bound \eqref{201219e1_9} established in \cite{MR552548}.
The authors of \cite{MR552548} first derived an accurate pointwise bound for a $d$-dimensional oscillatory integral, and then applied this bound to obtain a good pointwise bound for $|E_{[0, 1]^d}^{(d, k)} 1(x)|$ in terms of $x$.
Their pointwise bound is sharp for some, but not all, $x\in \R^N$, resulting in an integrability exponent that is not sharp.

In our paper, we propose an entirely different approach, which we call a stationary set method. This method, combined with a careful study of the geometry of stationary sets in our problem (see Subsection \ref{subsection4_1}), and certain rigidity properties of stationary sets (see Subsection \ref{201118subsection7_3}), allows us to prove sharp $L^p$ integribility of $|E_{[0, 1]^d}^{(d, k)} 1(x)|$ for $d=2$ and every $k\ge 2$.
\begin{remark}\label{newpaperannounce}
We expect our stationary set method to be useful in obtaining the sharp convergence exponents of Fourier extension operators for more general semialgebraic sets.
In a follow up paper, we are planning to show that the stationary set approach is often ``tight'' in the following sense.
Let $\phi : \R^{d} \to \R^{N}$ be a semialgebraic function and
\[
L(x):=\sup_{\mu\in \R} |\{\xi\in [0, 1]^d: \mu\le x\cdot \phi(\xi)\le \mu+1\}|,
\quad
x \in \R^{N},
\]
be the size of the stationary set associated to $x \cdot \phi$.
Define the convergence exponent and the stationary exponent by
\begin{align*}
p_c &:= \inf \Big\{p: \Big\|\int_{[0, 1]^d} e(x\cdot \phi(\xi))d\xi\Big\|_{L^p_x(\R^N)}<\infty\Big\}, \\
p_s &:=\inf \Big\{p: \|L(x)\|_{L^p_x(\R^N)}<\infty \Big\}.
\end{align*}
We will show that $p_s = \max \{p_c, N\}$.
The same principle also applies to Fourier transforms of surface measures of semialgebraic sets.

Moreover, via the method of the current paper, we will also show that, for a semialgebraic set $S\subset \R^N$ of dimension $d$, it holds that $p_c<\infty$ if and only if the intersection of $S$ with every hyperplane of $\R^N$ has dimension $<d$.

Although the problem of finding $p_c$ can be reduced to finding $p_s$ whenever $p_c>N$, we remark that already for three dimensional Parsell-Vinogradov surfaces (for which the condition $p_c>N$ holds), we encountered significant technical difficulties when estimating the relevant stationary sets.
We refer to Remark~\ref{rem:higher-dim-tarry} for a more detailed discussion.
\end{remark}

In the end, we comment on Corollary \ref{coro1_3}.
The estimate \eqref{201219e1_11} with a sharp range of $p$ when $d=1$ was proven by Drury \cite{MR781781}; indeed, he proved a stronger result with $\|f\|_{\infty}$ replaced by $\|f\|_q$ for an optimal range of $q$ as well.
As a consequence, Drury \cite{MR781781} recovered the result of Arkhipov, Chubarikov and Karatsuba \cite{MR552548} and gave an alternative proof for \eqref{eq:one_tarry}.
It is worth mentioning that Drury's result can be partially recovered via the broad-narrow analysis of Bourgain and Guth \cite{MR2860188} and the multi-linear restriction estimates of Bennett, Carbery and Tao \cite{MR2275834}; to be more precise, via these more recent tools, one can prove \eqref{201219e1_11} for an optimal range of $p$, and therefore obtain another proof of \eqref{eq:one_tarry}.  Moreover, as was shown in \cite{guo2020decoupling}, if one combines the broad-narrow analysis of Bourgain and Guth, with the multi-linear Fourier restriction estimates of Bennett, Bez, Flock and Lee \cite{MR3783217} and the decoupling inequalities for quadratic Parsell-Vinogradov surfaces in Guo and Zhang \cite{MR3994585}, then one can also recover the result of Mockenhaupt \cite{Mockenhaupt} (i.e. the case where $d\ge 2$ and $k=2$).
We tried to prove Corollary \ref{coro1_3} by this approach, but did not succeed.
For $d=2$ and $k=3$, we encountered problems in the narrow part of the analysis; while for $k\ge 4$, we encountered significant difficulties in both the broad and the narrow part.\\

\noindent \textbf{Acknowledgements.}
S.B. was supported in part by the NSF grant CCF-1910441.
S.G. was supported in part by the NSF grant DMS-1800274.
R.Z. was supported by the NSF grant DMS-1856541, DMS-1926686 and by the Ky Fan and Yu-Fen Fan Endowment Fund at the Institute for Advanced Study.
P.Z. was supported in part by DFG (German Research Foundation) -- EXC-2047/1 -- 390685813.
The authors would like to thank Xianghong Chen, Ziyang Gao, Phil Gressman, Jonathan Pila, Chieu-Minh Tran, Shaozhen Xu, Trevor Wooley and James Wright for valuable discussions at various stages of this project.




\section{Bounding oscillatory integrals by the size of stationary sets}

In this section, we prove
Theorem \ref{201004lem2_1_general}, which says that we can bound the oscillatory integral
\begin{equation}
I(Q) = \int_{[0, 1]^d} e (Q(\xi)) d \xi
\end{equation}
by a constant times the measure of the largest ``stationary set''
\[
\epsilon := \sup_{\mu\in\R} \abs{\{\xi\in [0, 1]^d: \mu\le Q(\xi)\le \mu+1\}}
\]
if the real phase function $Q$ is semialgebraic with bounded complexity.

In order to prove the main theorem, we will change the integral $I(Q)$ to a one-dimensional integral of the form
\begin{equation}
\int_{\R} S(\beta)\cdot e(\beta) d\beta.
\end{equation}
Then we will see that the conclusion follows once we prove that $S$ changes monotonicity finitely many times.

To prove the latter property, we have to use some tools from real algebraic, and more generally,
o-minimal geometry.
Recall that a  \emph{semi-algebraic subset $S \subset \R^n$} is, by definition,
a finite union of subsets, each of which is defined by a formula of the
form $P = 0, Q_1, \ldots,Q_\ell > 0$, where each $P,Q_1,\ldots,Q_\ell \in \mathbb{R}[X_1,\ldots,X_n]$.
If the total number of polynomials occurring in the above formulas is bounded by $s$, and the maximum degree
of these polynomials is bounded by $D$, we will say that the \emph{complexity} of $S$ (as well as that of the formula) is bounded by $k = s D$.
A \emph{semi-algebraic function $f$} is a function whose graph is a semi-algebraic set.
We will say that the \emph{complexity} of $f$ is bounded by $k$ if the complexity of the semi-algebraic set $\mathrm{graph}(f)$ is bounded by $k$.

We will need to define generalizations of the notion of semi-algebraic sets and functions.
For this purpose, it is convenient to introduce some basic terminology from model theory, where these concepts originated.

\subsection{A detour into model theory}
\label{subsec:logic}
In this subsection, we introduce some terminologies and tools in model theory. They will be useful when we prove Lemma \ref{uncertaintysignchangelemma}, which is a crucial ingredient in the proof of Theorem \ref{201004lem2_1_general}.

A \emph{language}  $L$ is a triple of tuples
\[((c_i)_{i \in I}, (f_j^{(m_j)})_{j \in J}, (R_k^{(n_k)})_{k\in K}),
\]
where each $c_i, i \in I,$ is a constant symbol,
each $f_j^{(m_j)}, j\in J,$ is a function symbol of arity $m_j$, and
each $R_k^{(n_k)}, k \in K,$ is a relation symbol of arity $n_k$.

An \emph{$L$-structure} $\mathcal{M}$ is a tuple
\[
\left(
  M, (c_i^{\mathcal{M}})_{i \in I}, (f_j^{(m_j),\mathcal{M}})_{j \in J}, (R_k^{(n_k),\mathcal{M}})_{k\in K}
\right),
\]
where $M$ is a set, for each $i \in I$,  $c_i^{\mathcal{M}} \in M$, for each $j \in J$,
$f_j^{(m_j),\mathcal{M}}$ is a function $M^{m_j} \rightarrow M$, and for each $k \in K$,
$R_k^{(n_k),\mathcal{M}} \subset M^{n_k}$ is a $n_k$-ary relation on $M$.
We call
\[
(c_i^{\mathcal{M}})_{i \in I}, (f_j^{(m_j),\mathcal{M}})_{j \in J}, (R_k^{(n_k),\mathcal{M}})_{k\in K}
\]
\emph{interpretations} in $\mathcal{M}$ of the corresponding symbols in $L$.
It is a common abuse of notation to denote the structure $\mathcal{M}$ by the underlying set $M$, and
we will take this liberty.

\hide{
  An \emph{$L$-formula}, $\phi(X_1,\ldots,X_n)$ with \emph{free variables}
  $X_1,\ldots,X_n$,
  is obtained using the logic connectives $\vee,\wedge,\neg$
  (denoting disjunction, conjunction and negation, respectively),
  and the quantifiers $\exists,\forall$,
  from \emph{atomic formulas}.
  An atomic formula is a formula either of the form $t_1 = t_2$, where $t_1,t_2$ are \emph{terms}
  built out of the function and constant symbols and variables, or of the form
  $R^{(n)}(t_1,\ldots,t_n)$ where $R^{(n)}$ is an $n$-ary relation symbol and $t_1,\ldots,t_n$ are terms.
}

An \emph{$L$-formula},
is obtained using the logic connectives $\vee,\wedge,\neg$
(denoting disjunction, conjunction and negation, respectively),
and the quantifiers $\exists,\forall$,
from \emph{atomic formulas}.
An atomic formula is a formula either of the form $t_1 = t_2$, where $t_1,t_2$ are \emph{terms}
built out of the function and constant symbols and variables, or of the form
$R^{(n)}(t_1,\ldots,t_n)$ where $R^{(n)}$ is an $n$-ary relation symbol and $t_1,\ldots,t_n$ are terms.
A variable $X$ occurring in an $L$-formula $\phi$ is called a \emph{bound variable} of the formula, if it appears as $\exists X$ or $\forall X$ in $\phi$.
Otherwise, $X$ is called a \emph{free variable} of $\phi$.
If $X_1,\ldots,X_n$ are the free variables of a formula $\phi$, we will denote this by writing $\phi$ as $\phi(X_1,\ldots,X_n)$.

Given an $L$-formula $\phi(X_1,\ldots,X_n)$,
an $L$-structure $\mathcal{M} = (M,\ldots)$,
and $\bar{a} \in M^n$,
one can define in the obvious way
(using induction on the structure of the formula $\phi$)
whether $\phi(\bar{a})$ is true in the structure $M$ (denoted
$M \models \phi(\bar{a})$). Thus, every $L$-formula $\phi(X_1,\ldots,X_n)$ defines a subset, $\phi(M)$,
of $M^n$,
defined by
\[
\phi(M) = \{ (a_1,\ldots,a_n) \in M^n : M \models \phi(a_1,\ldots,a_n)\}.
\]

\begin{exam}
If $L=((0,1),(+,\cdot),(\leq))$ denotes the language of ordered fields, where $0,1$ are constant
symbols, $+,\cdot$ are binary function symbols, and $\leq$ is a binary relation symbol, then
$X_1,X_2,X_3$ are variables, $0, X_1^2 + (1+1)\cdot X_2\cdot X_1 + X_3$ are terms, and
\begin{equation}
\label{eqn:formula}
\phi(X_2,X_3) :=   (\forall X_1) (0 \leq X_1^2 + (1+1)\cdot X_2\cdot X_1 + X_3)
\end{equation}
is a formula with free variables $X_2,X_3$, and a bound variable $X_1$.

Denoting by $\mathbb{R}$ the $L$-structure whose underlying set is the real numbers, and with $0,1,+,\cdot,\leq$
interpreted in the usual way, we have
\[
\phi(\mathbb{R}) =  \{(x_2, x_3) \in \mathbb{R}^2: (\forall x_1 \in \R) 0 \leq x_1^2 + 2x_2 x_1 + x_3\}.
\]
\end{exam}

Given an $L$-structure $M$, a formula $\phi(Y_1,\ldots,Y_m,X_1,\ldots,X_n)$ with free variables
\[
Y_1,\ldots,Y_m, X_1,\ldots,X_n,
\]
and
a tuple $\bar{a} = (a_1,\ldots,a_m) \in M^m$,
we call  $\phi(\bar{a},X_1,\ldots,X_n)$ an $L(M)$-formula ($L$-formula with parameters in $M$)
with free variables $X_1,\ldots,X_n$.

\hide{
  Given an $L$-formula $\phi(X_1,\ldots,X_n)$ and $\bar{a} \in M^n$,
  one can define in the obvious way
  (using induction on the structure of the formula $\phi$)
  whether $\phi(\bar{a})$ is true in the structure $M$ (denoted
  $M \models \phi(\bar{a})$).
}

An \emph{$L$-theory $T$} is a set of of $L$-formulas without free variables  (also called \emph{$L$-sentences}).
An $L$-structure $M$ is a \emph{model of $T$} (denoted $M \models T$),
if $M\models \phi $ for each
$\phi \in T$.
An $L$-theory $T$ admits \emph{quantifier elimination} if for every $L$-formula $\phi(X_1,\ldots,X_n)$
there exists a quantifier-free $L$-formula $\psi$ such that for every model $M$ of $T$,
$M \models (\phi(X_1,\ldots,X_n) \leftrightarrow \psi(X_1,\ldots,X_n){\color{blue})}$
(here and elsewhere, $\phi \leftrightarrow \psi$ is the standard abbreviation for the formula $(\neg \phi \vee \psi) \wedge (\neg \psi \vee \phi)$).
\hide{
  A theory $T$ is \emph{consistent} if it has a model and is \emph{complete}
  if every $L$-sentence $\phi$ has
  the property that either $M \models \phi$ for every model $M$ of $T$, or $M \models \neg \phi$ for every model of
  $M$ of $T$.
}

The \emph{theory of the $L$-structure $M$}, denoted $\mathrm{Th}(M)$, is the set of $L$-sentences which are true in $M$.
Two $L(M)$-formulas $\phi(\bar{a},X_1,\ldots,X_n)$ and $\psi(\bar{b},X_1,\ldots,X_n)$ are said to be \emph{equivalent modulo $\mathrm{Th}(M)$},
if  $M \models (\forall X_1) \cdots (\forall X_n) (\phi(\bar{a},X_1,\ldots,X_n) \leftrightarrow \psi(\bar{b},X_1,\ldots,X_n))$.
The \emph{definable sets} of an $L$-structure $M$ are sets of the form
\[
\{(x_1,\ldots,x_n) \in M^n : M \models \phi(x_1,\ldots,x_n)\},
\]
where $\phi$ is an $L(M)$-formula.
The \emph{definable functions} are those whose graphs are definable sets.

For the rest of the article, we will denote by
$L=((0,1),(+,\cdot),(\leq))$ the \emph{language of ordered fields}. 

The set $\mathbb{R}$ with the usual interpretations of $0,1,+,\cdot,\leq$ is an $L$-structure.
\hide{
  The formulas  $(X_1 + X_2)\cdot X_3 = 1$, $X_1 \leq X_2$ are atomic formulas in the
  language $L$.

  If $T = \mathrm{Th}(\mathbb{R})$, every atomic formula
  is equivalent  modulo $T$ to a
  a formula of the type $P \leq 0, P \in \mathbb{Z}[X_1,\ldots,X_n]$.
  It is an easy consequence that each $L(\mathbb{R})$-formula
  $\phi(X_1,\ldots,X_n)$ with free variables $X_1,\ldots,X_n$,
  is equivalent modulo $T$ to a formula
  obtained using logical connectives $\vee,\wedge,\neg$ and the quantifiers $\exists,\forall$,
  from formulas of the type $P \leq 0, P \in \mathbb{R}[X_1,\ldots,X_n]$.
  Thus, $\phi(X_1,X_2) = (\exists X_3) (X_3^2 + 2X_1 X_3 + X_2 \geq 0)$
  is an example of an $L(\mathbb{R})$-formula.
}
Notice that, by definition, semi-algebraic sets are definable sets in the $L$-structure $\mathbb{R}$.
The Tarski-Seidenberg theorem \cite{Tarski} states that the
the $L$-theory, $\mathrm{RCOF}$,  of \emph{real closed ordered fields} admits quantifier elimination.
Here, $\mathrm{RCOF}$ is the set of $L$-sentences axiomatizing ordered fields, in which every
non-negative element is a square and every polynomial having odd degree has a root.
By definition, we see that the field $\R$ (with the usual interpretations of $0,1,+,\cdots,\leq$) is a model of $\mathrm{RCOF}$.
As a consequence of the Tarski-Seidenberg theorem \cite{Tarski}, one
immediately obtains that every definable set of the $L$-structure $\mathbb{R}$ is semi-algebraic.

As an illustration
of Tarski-Seidenberg theorem, observe that the $L$-formula $\phi$ in \eqref{eqn:formula} (which has quantifiers) is equivalent modulo
the theory $\mathrm{RCOF}$ to the quantifier-free $L$-formula
\[
\psi(X_2,X_3) : = X_2 \cdot X_2 \leq X_3.
\]
As a consequence,
\[
\phi(\R) = \psi(\R) = \{(x_2,x_3) \in \R^2 : x^2 \leq x_3 \}.
\]
Another simple geometric consequence of the Tarski-Seidenberg theorem is
that if $S_1$ and $S_2$ are a semialgebraic sets in $\R$, then the set $\{x \in \R  : \exists y \in S_1 \text{ s.t. } x+y \in S_2\}$ is also semialgebraic.


While the Tarski-Seidenberg theorem is not quantitative as stated above, there are effective versions with
complexity estimates. In particular, given any $L$-formula $\phi$, there exists a quantifier-free $L$-formula
$\psi$ equivalent to $\phi$ modulo $\mathrm{RCOF}$, such that the
number and degrees of the polynomials appearing in $\psi$ is bounded by a function of the
number and degrees of the polynomials appearing in $\phi$, as well as the number of quantified and free variables.
This is in fact a consequence of the original proof of Tarski \cite{Tarski} of the Tarski-Seidenberg theorem, and we omit detailed explanation here.
We remark that the above mentioned function can be  made explicit, see for instance \cite[Theorem 14.16]{MR2248869}.

\hide{
  {\color{red}
    \begin{remark}[Remark about complexity]
    \label{rem:complexity}
    We would need for later use in the paper an ``effective" version of the Tarski-Seidenberg theorem, which we clarify below.
    Notice that that it is an immediate consequence of Tarski-Seidenberg theorem that any
    $L(\R)$-formula is equivalent to a quantifier-free $L(\R)$-formula. To see this just observe, that an $L(\R)$-formula
    $\phi(a_1,\ldots,a_D,X_1,\ldots,X_d)$ (where $a_1,\ldots,a_D \in \R$, and $\phi$ is an $L$-formula),
    is equivalent to $\psi(a_1,\ldots,a_D,X_1,\ldots,X_d)$, where $\psi(Y_1,\ldots,Y_D,X_1,\ldots,X_d)$ is a quantifier-free $L$-formula equivalent modulo RCOF to the $L$-formula  $\phi(Y_1,\ldots,Y_D,X_1,\ldots,X_d)$.
    The quantifier-free $L$-formula $\psi(Y_1,\ldots,Y_D,X_1,\ldots,X_d)$
    exists by virtue of the Tarski-Seidenberg theorem.

    Every atomic $L(\R)$-formula,
    is equivalent to a formula of the form $P = 0$ or $P > 0$, for some $P\in \R[X_1,\ldots,X_d]$. We will say that
    an $L(\R)$-formula whose atomic formulas are of the above form has \emph{complexity bounded by $k$}, if $k$ is greater than equal to the
    number of atomic formulas times the maximum degrees of the polynomials appearing in it.
    Now observe that for any given
    $k,d$,
    there exists $D =D(k,d), N = N(k,d)$, and
    a finite set
    \[
    \{\phi_i(Y_1,\ldotd,Y_D,X_1,\ldots,X_d) \mid 1 \leq i \leq N\}
    \]
    of $L$-formulas,
    such that for any $L(\R)$-formula
    $\phi$ with complexity bounded by $k$, there exists $i, 1\leq i \leq N$,
    and $(a_1,\ldots,a_D) \in \R^{D}$, such that $\phi = \psi_i(a_1,\ldots,a_D,X_1,\ldots,X_d)$.
    Since, by Tarski-Seidenberg theorem there exists
    for each $i, 1 \leq i \leq N$,
    a quantifier-free $L$-formula
    \[
    \psi_i(Y_1,\ldots,Y_D,X_1,\ldots,X_d)
    \]
    equivalent modulo RCOF to
    \[
    \phi_i(Y_1,\ldots,Y_D,X_1,\ldots,X_d).
    \]
    It follows that for each $L(\R)$-formula
    with complexity bounded by $k$, there exists a quantifier-free
    $L(\R)$-formula equivalent to it, whose complexity is bounded by the minimum, $n(d,k)$,
    of the set of numbers
    $n$,  such that the complexity of each $\psi_i$ is bounded by $n$. In other words, there exists
    $n(d,k) \in \mathbb{N}$, such that
    for every $L(\R)$-formula $\phi(X_1,\ldots,X_d)$ of complexity bounded by $k$, there exists by a quantifier-free $L(\R)$-formula of complexity bounded by $n(d,k)$ equivalent to $\phi$.
    The original proof of Tarski \cite{Tarski} of the Tarski-Seidenberg theorem
    yields an upper bound on the function $n(d,k)$ which is not elementary recursive.
    However, much tighter bounds are known (see for instance \cite[Theorem 14.16]{MR2248869}).
    \end{remark}
  }
}

\subsubsection{o-minimal structures}
\label{subsubsec:o-minimal}
Let $L'$ be any language containing the binary relation $\leq$, and let $M$ be an $L'$-structure such that the relation $\leq$ defines a total order on $M$.
The structure $M$ is called \emph{o-minimal} if the set of definable subsets of $M$ is the smallest possible --
namely, the set whose elements are finite unions
of points and intervals (such subsets are clearly definable by quantifier-free $L'(M)$-formulas).

\begin{exam}
Recall that $L$ is the language of ordered fields.
The $L$-structure $\mathbb{R}$ is an example of an o-minimal structure.
\end{exam}

Suppose that $L_{+}$ is a language containing $L$ (we call $L_{+}$ an
\emph{expansion} of $L$). An $L_{+}$-structure is called an \emph{o-minimal expansion of $\mathbb{R}$}
if the underlying set of the structure is $\mathbb{R}$, and the structure is o-minimal (i.e., the definable subsets
of $\mathbb{R}$ are precisely the semi-algebraic subsets of $\mathbb{R}$).

Let us look at an example to better digest this notion. In the following example, we expand the language $L$ with one additional function symbol.

\begin{exam}
Let $L_{+}$ be the expansion of $L$ by an additional $1$-ary function
symbol $f$, and consider the $L_{+}$-structure $\mathbb{R}_{\sin}$ whose underlying set is $\mathbb{R}$,
with the symbols of $L$ interpreted as before,
and the $1$-ary function symbol $f$  interpreted by the trigonometric function $\sin: \mathbb{R} \rightarrow \mathbb{R}$.
Then, $\mathbb{R}_{\sin}$ is \emph{not} an o-minimal expansion of $\mathbb{R}$,
since the definable subset of $\mathbb{R}$ defined by
the formula $f(X)=0$ is the set $\{n\pi: n \in \mathbb{Z}\}$,
which cannot be expressed as a \emph{finite} union of points and intervals of $\mathbb{R}$.
\end{exam}

\subsubsection{The o-minimal structures $\mathbb{R}_{\mathrm{an}}$ and $\mathbb{R}_{\mathrm{an},\exp}$}
\label{subsubsec:an-exp}
The above example shows that the real analytic functions cannot all be definable functions in any o-minimal expansion of
$\mathbb{R}$.
For instance, the function $\sin(x)$ is never
definable in any o-minimal expansion of $\mathbb{R}$.

However, notice that if we consider a function $f$,  which is equal to $\sin(x)$ on $[-1,1] \subset \mathbb{R}$, and defined to be $0$ outside of the
$[-1,1]$, then the set of zeros of $f$ is a semi-algebraic set
and hence definable in every o-minimal expansion of $\mathbb{R}$.

Motivated by this example, we call $f:\mathbb{R}^n \rightarrow \mathbb{R}$
a \emph{restricted analytic function}, if there is an open subset $U$ containing $[-1,1]^n$ in $\mathbb{R}^n$, and an analytic function $g:U \rightarrow \mathbb{R}$, such that
\begin{equation}
f(x) =
\begin{cases}
g(x) & \mbox{ for } x \in [-1,1]^n,\\
0 & \mbox{ otherwise}.
\end{cases}
\end{equation}
Let $L_{\mathrm{an}}$ denote the expansion of $L$ obtained by including a function symbol (of appropriate arity)
for each restricted analytic function. We denote by $\mathbb{R}_{\mathrm{an}}$ the $L_{\mathrm{an}}$
structure with underlying set $\mathbb{R}$, with the usual interpretations
of the symbols of $L$, and where we interpret the new symbols by the corresponding restricted analytic functions.
It is a theorem of Gabrielov \cite{Gabrielov} that
the $L_{\mathrm{an}}$-structure $\mathbb{R}_{\mathrm{an}}$ is an o-minimal expansion of $\mathbb{R}$.

The (unrestricted) exponential function is not a restricted analytic function. We expand the language
$L_{\mathrm{an}}$ further and include a symbol $\exp$ for the exponential function and denote the new language
by $L_{\mathrm{an},\exp}$. Denote by $\mathbb{R}_{\mathrm{an},\exp}$ the $L_{\mathrm{an},\exp}$-structure
obtained by interpreting the new symbol $\exp$ by the exponential function, and interpreting the symbols of
$L_{\mathrm{an}}$ as before. It is a theorem of van den Dries, Macintyre, and Marker  that:

\begin{thm}[\cite{MR1289495}]
The $L_{\mathrm{an},\exp}$-structure $\mathbb{R}_{\mathrm{an},\exp}$ is an o-minimal expansion of
$\mathbb{R}$.
\end{thm}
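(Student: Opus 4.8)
\smallskip
\noindent\textbf{Proof strategy.} The plan is to deduce o-minimality of $\mathbb{R}_{\mathrm{an},\exp}$ from a quantifier-elimination statement, the two main inputs being Gabrielov's theorem that $\mathbb{R}_{\mathrm{an}}$ is o-minimal \cite{Gabrielov}, which we may assume, and a \emph{preparation theorem} for the functions definable in $\mathbb{R}_{\mathrm{an},\exp}$. First, by the very definition of o-minimality recalled above, it suffices to prove that every subset of $\mathbb{R}$ definable in $\mathbb{R}_{\mathrm{an},\exp}$ is a finite union of points and open intervals, i.e.\ semialgebraic. To get a handle on \emph{all} definable sets I would pass to the language $L^{*} := L_{\mathrm{an}} \cup \{\exp,\log\}$, where $\exp$ is total as before and $\log$ is made total by setting $\log x := 0$ for $x \le 0$; since $\log$ is already definable in $\mathbb{R}_{\mathrm{an},\exp}$, this does not change the class of definable sets. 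Write $T$ for the $L^{*}$-theory of this structure. The goal is to show that $T$ admits quantifier elimination.

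\smallskip
\noindent\textbf{From QE to o-minimality.} Granting QE, a one-variable definable set becomes a finite Boolean combination of sets $\{t > 0\}$ and $\{t = 0\}$ for $L^{*}$-terms $t$, and each such $t$ defines an \emph{$\mathcal{L}$-analytic} function: a composition of restricted analytic functions, $\exp$, $\log$ and the field operations. For these the relevant tool is the one-variable case of a \emph{preparation theorem}: $\mathbb{R}$ can be cut into finitely many points and open intervals on each of which $t$ has the prepared form $t(x) = \pm\,|x-a|^{q}\exp(g(x))\,u(x)$ (or a bounded iterate of such expressions, to accommodate nested logarithms), with $q \in \mathbb{Q}$, $a$ an endpoint of the interval (possibly $\pm\infty$), $g$ an $\mathcal{L}$-analytic function of strictly smaller ``$\exp$-$\log$ level'', and $u$ a restricted-analytic unit bounded away from $0$ and $\infty$ there. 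Since $\exp(g) > 0$ and $u$ has constant sign on each piece, $t$ has constant sign on each piece, so $\{t>0\}$ and $\{t=0\}$ are finite unions of points and intervals; hence so is every definable subset of $\mathbb{R}$, which is exactly o-minimality. Note that naive analyticity is not enough here (an analytic function on $(0,\infty)$ may have infinitely many zeros, like $\sin$): the non-oscillation is what the preparation theorem delivers.

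\smallskip
\noindent\textbf{Proving the preparation theorem and QE.} I would prove the preparation theorem by induction on the ``$\exp$-$\log$ level'' of an $\mathcal{L}$-analytic function, uniformly in parameters: the identities $\exp(x+y) = \exp x\,\exp y$ and $\log(xy) = \log x + \log y$ let one push all exponentials to the front and all logarithms inside a composition tree, after which Weierstrass division for convergent power series (the analytic input that also underlies Gabrielov's theorem) applied after suitable blow-up substitutions produces the prepared form; the parametrized, multivariable statement uses Weierstrass preparation with parameters, which is precisely where o-minimality of $\mathbb{R}_{\mathrm{an}}$ enters. Then I would prove QE by the standard embedding criterion: if $M, N \models T$ with $M$ suitably saturated and $A$ a common $L^{*}$-substructure, any $L^{*}$-embedding $A \hookrightarrow M$ should extend after adjoining one element of $N$. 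Models of $T$ carry a canonical convex valuation $v$ measuring rate of growth, with divisible value group and with residue field a model of $\mathrm{Th}(\mathbb{R}_{\mathrm{an}})$, hence o-minimal; adjoining one element falls into the classical valuation-theoretic cases (residually algebraic; new value-group element; immediate/pseudo-Cauchy), and in each the preparation theorem certifies that the quantifier-free type is realized in $M$ --- using o-minimality of $\mathbb{R}_{\mathrm{an}}$ on the residue field for the first case, and the fact that $\exp$ induces an order isomorphism between an additive complement of the valuation ring and (a group mapping onto) the value group for the second.

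\smallskip
\noindent\textbf{Main obstacle.} The hard part will be the preparation theorem together with the finiteness bookkeeping --- that only finitely many pieces arise at each stage of the induction --- since this is exactly where the non-oscillation of $\exp$ and $\log$ (as opposed to $\sin$) must be exploited, and it requires a well-chosen complexity measure on $\mathcal{L}$-analytic functions and a uniform-in-parameters use of Weierstrass preparation. Equivalently, the core difficulty is to understand the Hardy field of germs at $+\infty$ of one-variable $\mathcal{L}$-analytic functions and to show it is a real closed field, closed under $\exp$, $\log$ and implicit definition, containing no germ with infinitely many zeros. Finally, I note that the tempting shortcut of merely combining Gabrielov's o-minimality of $\mathbb{R}_{\mathrm{an}}$ with Wilkie's o-minimality of $\mathbb{R}_{\exp}$ does not work, since o-minimality is not preserved under forming the join of two o-minimal expansions of $\mathbb{R}$; the preparation machinery cannot be bypassed.
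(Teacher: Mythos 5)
The paper does not prove this statement at all: it is quoted verbatim as an external black box from van den Dries--Macintyre--Marker \cite{MR1289495}, so there is no in-paper argument to compare yours against. Judged on its own, your outline correctly identifies the shape of the published proofs --- it blends the van den Dries--Macintyre--Marker route (quantifier elimination for the $L_{\mathrm{an}}\cup\{\exp,\log\}$-theory, established via a valuation-theoretic embedding test on models carrying the canonical convex valuation) with the later Lion--Rolin route (a preparation theorem for $\log$-$\exp$-analytic functions), and it correctly flags that one cannot simply ``join'' Gabrielov's $\mathbb{R}_{\mathrm{an}}$ with Wilkie's $\mathbb{R}_{\exp}$.

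However, as a proof it has a genuine gap, in the sense that essentially all of the mathematical content is deferred. The two steps you label as ``the hard part'' --- the parametrized preparation theorem with its finiteness bookkeeping, and the verification that in each valuation-theoretic case the quantifier-free type is realized (which in \cite{MR1289495} rests on Ressayre's axiomatization of the real exponential field, on Wilkie's model completeness results, and on a delicate analysis of the residue field and value group) --- are each theorems comparable in difficulty to the statement being proved; asserting that ``Weierstrass division after suitable blow-up substitutions produces the prepared form'' is a research program, not an argument. There is also a smaller logical gap in the passage from QE to o-minimality: even granting QE, the claim that $\{t>0\}$ is a finite union of points and intervals for every one-variable $L^{*}$-term $t$ is exactly the non-oscillation statement one is trying to prove, so your deduction is circular unless the preparation theorem is proved first --- which you acknowledge, but which means the ``From QE to o-minimality'' paragraph cannot stand independently of the deferred material. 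In short: the outline is faithful to the literature, but it should be presented (as the paper presents it) as a citation, not as a proof.
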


\begin{remark}
\label{rem:logarithm}
Notice that the logarithm function $\log: (0,\infty)\rightarrow \mathbb{R}$ is definable in $\mathbb{R}_{\mathrm{an},\mathrm{exp}}$, since its graph
\[
\mathrm{graph}(\log) = \{(x,y) \in \mathbb{R}^2 \mid x > 0, x = \mathrm{exp}(y) \}
\]
is definable in $\mathbb{R}_{\mathrm{an},\mathrm{exp}}$.
\end{remark}

\hide{
  If we add the graphs of all restricted analytic functions to the set of semi-algebraic sets, and close
  the new collection of sets under finite set theoretic operations, taking Cartesian products and images under projections, the resulting tuple of subsets of $\mathbb{R}^n, n\geq 0$ forms an o-minimal structure (result of Gabrielov \cite{Gabrielov}), which we denote by $\mathbb{R}_{\mathrm{an}}$. Note that the unrestricted
  exponential function, $\mathrm{exp}: \mathbb{R} \rightarrow \mathbb{R}$ is not definable in $\mathbb{R}_{\mathrm{an}}$. If one adds the graph of $\mathrm{exp}$ to the set of definable sets of the o-minimal structure $\mathbb{R}_{\mathrm{an}}$, and again close it under finite set theoretic operations, taking Cartesian products and images under projections, one obtains an o-minimal structure (result of Wilkie \cite{Wilkie}) which we denote by $\mathbb{R}_{\mathrm{an},\mathrm{exp}}$.

}

\hide{
  The set of semi-algebraic subsets of $\mathbb{R}^n, n \geq 0$ satisfy the following important properties.
  \begin{enumerate}[Properties of semi-algebraic sets]
  \item
  \label{itemlabel:1}
  The class of semi-algebraic sets is closed under finite unions and intersections, taking complements, and also finite Cartesian products. This is just a consequence of the definition of semi-algebraic sets.
  \item
  \label{itemlabel:2}
  The class of semi-algebraic sets are closed under projections maps. More precisely, if $S \subset \mathbb{R}^m \times \mathbb{R}^n$ is a semi-algebraic subset, and $\pi: \mathbb{R}^m \times \mathbb{R}^n \rightarrow \mathbb{R}^m$ the projection map on to the first factor, then $\pi(S)$ is a semi-algebraic subset of $\mathbb{R}^m$. This property is usually called the Tarski-Seidenberg theorem.
  \item
  \label{itemlabel:3}
  The semi-algebraic subsets of $\mathbb{R}$ are precisely finite unions of points and open intervals.
  \end{enumerate}

  Items \eqref{itemlabel:1} and \eqref{itemlabel:2} of the properties of semi-algebraic sets listed above has the following equivalent interpretation in the language of first-order logic. Consider the first-order language of the reals consisting of the set $\mathbb{R}$ as constants, $+,\cdot$, as binary function symbols, and the order relation denoted by $\geq$. A first-order formula in this language is defined inductively as follows (we are assuming that we have a countably infinite set of variables):
  \begin{enumerate}
  \item For each polynomial $P\in \mathbb{R}[X_1,\ldots,X_n]$, and $Y_1,\ldots,Y_m$ be a set of variables distinct from $X_1,\ldots,X_n$, then $P \geq 0$ is a formula with free variables $X_1,\ldots,X_n,Y_1,\ldots,Y_m$.
  \item If $\phi_1,\phi_2$ are formulas with the same set of free variables, then so are $\phi_1 \wedge \phi_2$, $\phi_1 \vee \phi_2$, $\neg \phi_1$ with the same set of free variables.
  \item If $\phi$ is a formula with $X$ as a free variable, then $\exists X \phi$, $\forall X \phi$ are formulas.
  \end{enumerate}
  A first-order formula in the language of the reals without any of the quantifiers $\exists,\forall$ is called
  a quantifier-free formula.

  Two formulas $\phi,\psi$ of the first-order language of the reals having free variables $X_1,\ldots,X_n$ are
  said to be equivalent modulo the theory of the reals if and only if for all $(x_1,\ldots,x_n)\in \mathbb{R}^n$, $\mathbb{R} \models \phi(x_1,\ldots,x_n)$ if and only if  $\mathbb{R} \models \psi(x_1,\ldots,x_n)$.
  (We do not define precisely what it means to say that $\mathbb{R} \models \phi(x_1,\ldots,x_n)$, i.e.
  $\phi(x_1,\ldots,x_n)$ is ''true'' over $\mathbb{R}$, but it has an obvious meaning which the reader is encouraged to unravel from the definition of a  first-order formula).

  Item (2) of the list of properties above can now be restated in the language of logic as:
  \begin{thm}[Tarski-Seidenberg]
  \label{thm:TS}
  Every formula in the first order language of the reals is equivalent modulo the theory of the reals to a quantifier-free formula.
  \end{thm}
  A conceptual advantage of the logical formulation of the Tarski-Seidenberg theorem is that many properties of semi-algebraic sets and functions can be expressed using first-order language and quantifiers. For instance,
  while it may be tedious to prove directly that the closure (in the Euclidean topology)  of a semi-algebraic set $S \subset \mathbb{R}^n$ is also semi-algebraic, it is an immediate consequence Theorem~\ref{thm:TS}. By definition every semi-algebraic set $S$ is defined by a quantifier-free first-order formula, and conversely
  every quantifier-free first order formula $\phi(X_1,\ldots,X_n)$ defines a semi-algebraic subset of $\mathbb{R}^n$,
  namely the subset $\{(x_1,\ldots,x_n) \in \mathbb{R}^n \mid \mathbb{R} \models \phi(x_1,\ldots,x_n)\}$. The closure of the set defined by $\phi(X_1,\ldots,X_n)$ is then described by the (quantified) first-order formula
  \[
  \psi(X_1,\ldots,X_n) := \forall \varepsilon (\varepsilon > 0) \Rightarrow (\exists Y_1 \cdots \exists Y_n)
  \phi(Y_1,\ldots,Y_n) \wedge \sum_i (X_i - Y_i)^2 < \varepsilon.
  \]
  Theorem~\ref{thm:TS} now implies that there exists a quantifier-free formula $\widetilde{\psi}$ equivalent to
  $\psi$, and thus the set defined by $\psi$, which is also the closure of the set defined by $\phi$,
  is semi-algebraic.
}

Let $\mathcal{S}$ denote an o-minimal expansion of $\mathbb{R}$.
If we denote by $\mathcal{S}_n$ for each $n \geq 0$ the
set of subsets of $\mathbb{R}^n$ which are definable in $\mathcal{S}$,
then the tuple $(\mathcal{S}_n)_{n \geq 0}$ satisfy the following properties,
which follow directly from the definition of an o-minimal expansion.

\begin{enumerate}[(a)]
\item
\label{itemlabel:1}
If $n \geq 0$ and $A,B \in \mathcal{S}_n$, then $A\cup B,A \cap B, \mathbb{R}^n - A \in \mathcal{S}_n$.
If $n,m\geq 0$ and $A \in \mathcal{S}_m, B \in \mathcal{S}_n$, then $A \times B \in \mathcal{S}_{m+n}$.
\item
\label{itemlabel:2}
If $\pi:\mathbb{R}^m \times \mathbb{R}^n \to \mathbb{R}^m$ is the projection map and $A \in \mathcal{S}_{m+n}$, then $\pi(A) \in \mathcal{S}_m$.
\item
\label{itemlabel:3}
Elements of $\mathcal{S}_1$ are precisely the finite unions of points and open intervals.
\end{enumerate}

\begin{remark}
\label{rem:constructible}
While the set of definable sets in an o-minimal expansion of $\mathbb{R}$ is stable under finite
set operations and projections (Properties \eqref{itemlabel:1} and \eqref{itemlabel:2} above),
the set of definable functions is not necessarily closed under taking parametric integrals.
More precisely, if $f:\R^m \times \R^n \rightarrow \R$ is a definable function in $\mathcal{S}$,
then the function
\begin{equation}\label{210205e2_4}
g(x) =
\begin{cases}
\int_{\R^m} f(y,x) dy &  \mbox{ if } \int_{\R^m} |f(y,x)| dy < \infty, \\
0, & \mbox{ otherwise }
\end{cases}
\end{equation}
is not necessarily definable in $\mathcal{S}$. It suffices to take the example of the function
\begin{equation}
f(y, x) =
\begin{cases}
1/y &  \text{ if } x>0, 1<y<x, \\
0 & \text{ otherwise,}
\end{cases}
\end{equation}
whose graph is a semi-algebraic set and so is clearly definable in the o-minimal structure $\mathbb{R}$. However,
the parametric integral is the logarithm function whose graph is not a semi-algebraic and hence
is not definable in $\mathbb{R}$.
On the other hand, notice that $g(x)$ is definable in the o-minimal expansion
$\mathbb{R}_{\an,\exp}$ (cf. Remark~\ref{rem:logarithm}).

In fact, it is proved in \cite[Theorem 1.3]{MR2769219}, that if $f:\R^m \times \R^n \to \R$ is a \emph{constructible} function, then the function defined in \eqref{210205e2_4}
is constructible (cf. Definition 1.2 in \cite{MR2769219}).
We omit the definition of constructible functions in the sense of \cite{MR2769219}, but
list two of their properties, which are the only facts that we will need in this paper.
\begin{enumerate}
\item
Constructible functions are definable in the o-minimal expansion $\mathbb{R}_{\an,\exp}$ of $\mathbb{R}$. This follows easily from the definition of constructible functions and the fact that the logarithm function is definable in $\mathbb{R}_{\an,\exp}$.
\item
Semi-algebraic functions are constructible.
\end{enumerate}
\end{remark}

An important consequence of the properties \eqref{itemlabel:1}, \eqref{itemlabel:2} and \eqref{itemlabel:3}
is the topological tameness of definable sets in an o-minimal expansion of $\mathbb{R}$.
Let $\mathcal{S}$ be a $L_{\mathcal{S}}$-structure which is an o-minimal expansion of $\mathbb{R}$.
Then the definable sets of $\mathcal{S}$ can only have a finite number of connected components
(this is a consequence of  \cite[Theorem 2.9]{MR1633348}).

Another important property of the definable sets is that of topological tameness in families.
\begin{prop}[{\cite[Chapter 9, Section 2]{MR1633348}}]\label{prop:tame}
Let $\mathcal{S}$ be a $L_{\mathcal{S}}$-structure which is an o-minimal expansion of $\mathbb{R}$.
Let $X$ and $Y$ be definable sets and let $f:X\to Y$ be a definable function.
Then, there are finitely many homeomorphism types amongst the fibers $X_y = X \cap f^{-1}(y), y \in Y$.
In particular, there exists a constant $N_f \geq 0$,
such that the number of connected components of $X_y, y \in Y$ is bounded by $N_f$.
\end{prop}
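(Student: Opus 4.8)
The plan is to deduce the proposition from the o-minimal \emph{trivialization theorem} (generic triviality of definable maps), which itself rests on the cell decomposition theorem. First I would reduce to the case where $f$ is a coordinate projection. Say $X\subseteq\R^m$ and $Y\subseteq\R^n$; replace $X$ by the graph $\Gamma_f=\{(x,f(x)):x\in X\}\subseteq\R^{m+n}$ and replace $f$ by the restriction to $\Gamma_f$ of the projection $\pi\colon\R^{m+n}\to\R^n$ onto the $Y$-coordinates. This is legitimate: $\Gamma_f$ is definable since $f$ is a definable function, for every $y\in Y$ the fiber $\pi^{-1}(y)\cap\Gamma_f$ is definably homeomorphic to $X_y=f^{-1}(y)$ via $(x,y)\mapsto x$, and a homeomorphism preserves both the homeomorphism type and the number of connected components. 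So it suffices to prove the statement for $\pi$.

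Next comes the key input: for the projection $\pi\colon\Gamma_f\to Y$ there is a finite partition $Y=Y_1\cup\dots\cup Y_k$ into definable sets such that $\pi$ is \emph{definably trivial} over each $Y_i$, meaning that there is a definable set $F_i$ and a definable homeomorphism $h_i\colon\pi^{-1}(Y_i)\to Y_i\times F_i$ satisfying $\pi=p_i\circ h_i$, where $p_i\colon Y_i\times F_i\to Y_i$ is the first projection. This is the o-minimal trivialization theorem \cite[Chapter 9]{MR1633348}, and establishing it is the main obstacle. I would prove it by induction on the fiber dimension: after permuting coordinates, the cell decomposition theorem decomposes $\Gamma_f$ into finitely many cells cylindrical over their images in the base, over each of which $\pi$ is continuous, and one then assembles the trivializations cell by cell. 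The delicate point is to glue the trivializations coming from lower-dimensional boundary cells to those from the top-dimensional cells in a \emph{definable} and continuous fashion; here one exploits the uniform finiteness properties of o-minimal structures (finitely many connected components, uniformly in parameters) together with definable choice.

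Granting the trivialization theorem, the conclusion is immediate. For any $y\in Y_i$ the map $h_i$ restricts to a homeomorphism $\pi^{-1}(y)\cap\Gamma_f\cong\{y\}\times F_i\cong F_i$, so every fiber is homeomorphic to one of the finitely many sets $F_1,\dots,F_k$; hence the fibers $X_y$ realize at most $k$ homeomorphism types as $y$ ranges over $Y$. Finally, each $F_i$ is a definable set and therefore has finitely many connected components by \cite[Theorem 2.9]{MR1633348}. Setting $N_f$ to be the maximum over $i$ of the number of connected components of $F_i$, we conclude that every fiber $X_y$, being homeomorphic to some $F_i$, has at most $N_f$ connected components.
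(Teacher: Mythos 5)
Your proposal is correct and follows essentially the same route as the paper, which states this proposition without proof by citing precisely the definable trivialization theorem of \cite[Chapter 9, Section 2]{MR1633348}: your reduction to the (continuous) projection off the graph, the appeal to generic definable triviality, and the final counting of homeomorphism types and connected components of the typical fibers $F_i$ is the standard derivation the citation points to.
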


As an application of Proposition~\ref{prop:tame}, we have:
\begin{prop}
\label{prop:monotonicity}
Let $\mathcal{S}$ be a $L_{\mathcal{S}}$-structure which is an o-minimal expansion of $\mathbb{R}$.
Let $h: \mathbb{R}^D\times \mathbb{R} \rightarrow \mathbb{R}$ be a definable function, and for
$x \in \mathbb{R}^D$, let $N(x)$ denote the number of times the function $h(x,\cdot):\mathbb{R} \rightarrow \mathbb{R}$ changes monotonicity. Then, $\sup_{x \in \mathbb{R}^D}  N(x) < \infty$.
\end{prop}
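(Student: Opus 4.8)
The plan is to realise the pointwise ``turning set'' of $h(x,\cdot)$ as the fibre of a single definable subset of $\mathbb{R}^{D}\times\mathbb{R}$, to show that these fibres are finite, and then to invoke Proposition~\ref{prop:tame} to bound their cardinality uniformly in $x$.

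First I would fix $x\in\mathbb{R}^{D}$ and observe that $h(x,\cdot):\mathbb{R}\to\mathbb{R}$ is definable in $\mathcal{S}$, since its graph is the fibre over $x$ of the definable set $\mathrm{graph}(h)\subseteq\mathbb{R}^{D}\times\mathbb{R}\times\mathbb{R}$. Hence the o-minimal monotonicity theorem (see e.g. \cite{MR1633348}) provides a finite set $F_{x}\subset\mathbb{R}$ such that on each connected component of $\mathbb{R}\setminus F_{x}$ the function $h(x,\cdot)$ is continuous and either constant or strictly monotone. Now set
\[
G:=\bigl\{(x,t)\in\mathbb{R}^{D}\times\mathbb{R} : h(x,\cdot)\text{ is discontinuous at }t,\text{ or for every }\varepsilon>0\text{ the restriction }h(x,\cdot)|_{(t-\varepsilon,\,t+\varepsilon)}\text{ is neither constant nor strictly increasing nor strictly decreasing}\bigr\}.
\]
Each clause here — continuity at $t$, and the existence of some $\varepsilon>0$ making $h(x,\cdot)$ constant, strictly increasing, or strictly decreasing on $(t-\varepsilon,t+\varepsilon)$ — is obtained from the function $h$ by first-order operations (quantifiers over real variables together with the field and order operations of $\mathcal{S}$), so $G$ is a definable subset of $\mathbb{R}^{D+1}$. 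If $t\notin F_{x}$ then $t$ lies in an open component of $\mathbb{R}\setminus F_{x}$ on which $h(x,\cdot)$ is continuous and constant or strictly monotone, so $t\notin G_{x}:=\{t:(x,t)\in G\}$; hence $G_{x}\subseteq F_{x}$, and in particular $G_{x}$ is finite for every $x$.

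Next I would apply Proposition~\ref{prop:tame} to the coordinate projection $\pi:G\to\mathbb{R}^{D}$, which is a definable map between definable sets. It produces a constant $N_{\pi}$ bounding the number of connected components of every fibre $\pi^{-1}(x)\cong G_{x}$; since $G_{x}$ is finite this gives $\abs{G_{x}}\le N_{\pi}$ for all $x$. It then remains to bound $N(x)$ in terms of $\abs{G_{x}}$. On a connected component $J$ of $\mathbb{R}\setminus G_{x}$ every point has a neighbourhood on which $h(x,\cdot)$ is continuous and (constant, or strictly increasing, or strictly decreasing); the three corresponding open subsets of $J$ are pairwise disjoint and cover $J$, so by connectedness exactly one of them is all of $J$, and a routine compactness argument upgrades ``locally strictly increasing at every point of $J$'' to ``strictly increasing on $J$'' (and likewise in the other two cases). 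Thus $h(x,\cdot)$ is continuous and constant or strictly monotone on each of the at most $\abs{G_{x}}+1$ components of $\mathbb{R}\setminus G_{x}$; consequently $N(x)$ is bounded by a quantity depending only on $\abs{G_{x}}$, and since $\abs{G_{x}}\le N_{\pi}$ for every $x$ we conclude $\sup_{x\in\mathbb{R}^{D}}N(x)<\infty$.

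The conceptual content is carried entirely by Proposition~\ref{prop:tame} and the o-minimal monotonicity theorem; the only steps requiring care are bookkeeping ones — writing out the first-order formulas that witness definability of $G$ (in particular that ``$h(x,\cdot)$ is continuous at $t$'' and ``$h(x,\cdot)$ is strictly monotone on some interval around $t$'' are definable conditions on $(x,t)$), and the local-to-global monotonicity step on the components of $\mathbb{R}\setminus G_{x}$, where one must pin down the precise meaning of ``changes monotonicity'' and check that it remains controlled by the combinatorial count $\abs{G_{x}}+1$. I expect this last bookkeeping point to be the only mildly delicate part of the argument.
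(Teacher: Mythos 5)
Your proposal is correct and follows essentially the same route as the paper: both realise the set of points where $h(x,\cdot)$ changes monotonicity as the fibres of a single definable subset of $\mathbb{R}^{D}\times\mathbb{R}$, use the o-minimal monotonicity theorem to see each fibre is finite, and then apply Proposition~\ref{prop:tame} to the coordinate projection to bound the fibre cardinalities uniformly. Your extra care in the final bookkeeping step (passing from $\abs{G_x}$ to $N(x)$ via the components of $\mathbb{R}\setminus G_x$) is a sound elaboration of what the paper leaves implicit.
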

\begin{proof}
First observe that for each $x \in \R^D$, the function $h(x,\cdot)$ is a definable function. Using the monotonicity theorem for definable functions \cite[Theorem 1.2, Chapter 3]{MR1633348}, for each $x \in \R^D$, there exists a finite partition of $\R$ into points and open intervals, such that over each open interval the function $h(x,\cdot)$ is either constant, strictly increasing, or strictly decreasing.
Hence, the set of points at which the function $h(x,\cdot)$ changes monotonicity is contained in the set of end points of these intervals, and in particular is a finite subset of $\R$.

Now, let $S = \{(x,t) \in \mathbb{R}^D \times \mathbb{R}\mid h(x,\cdot) \mbox{ changes monotonicity at } t\}$.
Since changing monotonicity of a function is expressible using a first-order formula (involving quantifiers) in the language $L_{\mathcal{S}}$,
it follows that the set $S$ is definable in o-minimal structure $\mathcal{S}$.

Now, applying Proposition~\ref{prop:tame}, we get that there are finitely many homeomorphism types
amongst the fibers of the map $(x,t) \mapsto x$ restricted to $S$, noting that this map is clearly definable in
$\mathcal{S}$. Moreover, each fiber has finitely many points using the observation in the first paragraph of the proof. The proposition follows.
\end{proof}

\hide{
  It is worth noting that Propositions~\ref{prop:tame} and \ref{prop:monotonicity} and the preceding properties mentioned of semi-algebraic sets are all consequences of properties (1),(2) and (3) listed above.

  \subsection{o-minimal expansions of $\mathbb{R}$}
  In this paper we will need to consider functions which are not necessarily semi-algebraic but we will prove that they satisfy tameness properties analogous to semi-algebraic functions. We will show this by proving that these functions belong to an \emph{o-minimal expansion of $\mathbb{R}$}. We recall the notion of o-minimality. As mentioned above the topological tameness properties of semi-algebraic sets can all be deduced from the properties (1),(2) and (3).
  An o-minimal expansion, $\mathcal{S}$,  of $\mathbb{R}$ is a tuple  $(\mathcal{S}_n)_{n \ge 0}$, where each $\mathcal{S}_n$ is a set of subsets of $\mathbb{R}^n$, such that the sets satisfy properties (1), (2) and (3). More precisely:
  \begin{enumerate}
  \item For $n \geq 0$, $A,B \in \mathcal{S}_n$, $A\cup B,A \cap B, \mathbb{R}^n - A \in \mathcal{S}_n$. For
  $A \in \mathcal{S}_m, B \in \mathcal{S}_n, A \times B \in \mathcal{S}_{m+n}$.
  \item Denoting $\pi:\mathbb{R}^m \times \mathbb{R}^n \rightarrow \mathbb{R}^m$ the projection map, for each $A \in \mathcal{S}_{m+n}$, $\pi(A) \in \mathcal{S}_m$.
  \item Elements of $\mathcal{S}_1$ are precisely the finite unions of points and open intervals.
  \end{enumerate}
  It is customary to call the elements of the various $\mathcal{S}_n$ of an o-minimal structure $\mathcal{S}$ to be
  the definable sets of that structure. As in the case of semi-algebraic sets, a function is definable if its graph is a definable set.

  If one takes for $\mathcal{S}_n$ all the semi-algebraic subsets of $\mathbb{R}^n$, then one obtains an
  o-minimal expansion of $\mathbb{R}$. Since the proofs of the various topological properties of semi-algebraic sets
  described above (such as Propositions~\ref{prop:SA}, \ref{prop:monotonicity}) depend only the properties
  (1), (2) and (3), these propositions remain valid over arbitrary o-minimal structures once we replace the
  word "semi-algebraic" by "definable".

  In this paper, we will need to consider functions arising from integrating semi-algebraic functions, which are
  not semi-algebraic anymore. But we will prove that such functions are definable in a larger o-minimal structure and
  hence Proposition~\ref{prop:monotonicity} will still be valid for such functions.

  Clearly, all real analytic functions (say the function $\sin(x)$) cannot all be definable function in any o-minimal structure. If $\sin(x)$ is definable, then the set defined by $\sin(x) = 0$ will be a definable subset of $\mathbb{R}
  $, which is impossible since it is not a finite union of points and open intervals. However,  we consider the function $f$,  which is equal to $\sin(x)$ on a compact subset of $\mathbb{R}$, and defined to be $0$ outside of the
  compact subset, then the set of zeros of $f$ is a semi-algebraic and hence definable in every o-minimal structure.
  Motivated by this, we call $f:\mathbb{R}^n \rightarrow \mathbb{R}$ to be a restricted analytic function, if there exists a compact subset $C$ contained in an open subset $U$ in $\mathbb{R}^n$, and an analytic function $g:\mathbb{R}^n \rightarrow \mathbb{R}$, such that
  \begin{eqnarray*}
  f(x) &=& g(x) \mbox{ for } x \in C, \\
  &=& 0, \mbox{ otherwise}.
  \end{eqnarray*}

  If we add the graphs of all restricted analytic functions to the set of semi-algebraic sets, and close
  the new collection of sets under finite set theoretic operations, taking Cartesian products and images under projections, the resulting tuple of subsets of $\mathbb{R}^n, n\geq 0$ forms an o-minimal structure (result of Gabrielov \cite{Gabrielov}), which we denote by $\mathbb{R}_{\mathrm{an}}$. Note that the unrestricted
  exponential function, $\mathrm{exp}: \mathbb{R} \rightarrow \mathbb{R}$ is not definable in $\mathbb{R}_{\mathrm{an}}$. If one adds the graph of $\mathrm{exp}$ to the set of definable sets of the o-minimal structure $\mathbb{R}_{\mathrm{an}}$, and again close it under finite set theoretic operations, taking Cartesian products and images under projections, one obtains an o-minimal structure (result of Wilkie \cite{Wilkie}) which we denote by $\mathbb{R}_{\mathrm{an},\mathrm{exp}}$.
  Note that the function $\log: (0,\infty)\rightarrow \mathbb{R}$ is definable in $\mathbb{R}_{\mathrm{an},\mathrm{exp}}$, since its graph
  \[
  \mathrm{graph}(\log) = \{(x,y) \in \mathbb{R}^2 \mid x > 0, y = \mathrm{exp}(x) \}
  \]
  is clearly definable in $\mathbb{R}_{\mathrm{an},\mathrm{exp}}$.

  oOn particular difficulty is that our $S$ will be volume functions of various real semi-algebraic sets. Such functions are in general not semialgebraic anymore but can still be studied using the theory of o-minimal structures, which is the approach we will take.
}

\hide{
  For the above reasons, we gather some facts and references in real algebraic geometry and in the theory of o-minimal structures before we set out to prove Theorems \ref{201004lem2_1_general}. A set in $\R^d$ is a \emph{semialgebraic set} if it can be defined under taking finite unions/intersections/complements of sets satisfying polynomial equations/inequalities. For the same set we define  its \emph{complexity} to be the minimum possible total degree of all equations/inequalities used. We observe that every semialgebraic set is measurable. A function on a semialgebraic set is \emph{semialgebraic} if and only if its graph is semialgebraic and in such cases its \emph{complexity} is defined to be the complexity of its graph.

  In the proof of Theorem \ref{201004lem2_1_general} we will use Tarski's celebrated projection theorem saying that in $\R^{d+1}$, for any semialgebraic set of complexity $\leq k$, its projection to the first $d$ coordinates is a set of complexity $O_{d, k}(1)$. See for example \cite{MR2248869} for a reference. A direct consequence of this theorem is the following ``quantifier elimination'' result: in a given $\R^d$, any set described using
  semialgebraic sets with quantifiers is semialgebraic. And its complexity is bounded by a constant that depends only on the complexity of all semialgebraic sets involved in the description and the length of the description.

  We will use properties about o-minimal structures and definable sets/functions, for these notions and basic properties we refer the readers to Chapter 1 of the book \cite{MR1633348}.
}

\subsection{Proof of Theorem \ref{201004lem2_1_general}}

With the preparation above, let us prove a lemma that will play a crucial role in the proof of Theorem \ref{201004lem2_1_general}.
\begin{lem}\label{uncertaintysignchangelemma}
Let $d \geq 1$ and let $U$ be a semialgebraic set in $\R^{d+1}$ with complexity $\leq k$.
Assume further that $U$ is bounded.
For each $\beta \in \R$, we define
\begin{equation}
A_U(\beta) = \int_{\R^d} \one_U (\xi, \beta) d\xi.
\end{equation}
Then the function $A_U$ changes monotonicity $O_{d, k} (1)$ times.
\end{lem}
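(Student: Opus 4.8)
\emph{Proof strategy.} The plan is to reduce the lemma to the uniform monotonicity statement of Proposition~\ref{prop:monotonicity}, applied not over the semialgebraic category but over the o-minimal structure $\R_{\an,\exp}$. The point is that, although a single volume function $A_U$ need not be semialgebraic, it is always definable in $\R_{\an,\exp}$, and moreover belongs to a \emph{bounded} definable family once we fix $d$ and $k$; Proposition~\ref{prop:monotonicity} then produces a bound on the number of monotonicity changes that is uniform over the family, which is exactly the $O_{d,k}(1)$ in the statement.

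The first step is a uniform parametrization. A semialgebraic $U\subseteq\R^{d+1}$ of complexity $\leq k$ is a Boolean combination of at most $k$ polynomial equalities and inequalities in $d+1$ variables, each of degree at most $k$. There are only finitely many possible ``shapes'' for such a combination, and within a given shape the set $U$ is determined by the tuple $a$ of coefficients of the polynomials occurring, which ranges over some $\R^{D}$ with $D=D(d,k)$. Hence there are semialgebraic sets $V^{(1)},\dots,V^{(M)}\subseteq\R^{D}\times\R^{d+1}$, with $M,D$ depending only on $d,k$, such that every $U$ as in the lemma equals a fibre $V^{(i)}_a:=\{(\xi,\beta):(a,\xi,\beta)\in V^{(i)}\}$ for some $i$ and some $a\in\R^{D}$; this is an instance of the effective Tarski--Seidenberg theorem discussed above. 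For each $i$ I then define $h_i\colon\R^{D}\times\R\to\R$ by letting $h_i(a,\beta)$ be $\int_{\R^d}\one_{V^{(i)}}(a,\xi,\beta)\,d\xi$ when this integral converges and $0$ otherwise. Because $U$ is bounded, every slice $\{\xi:(\xi,\beta)\in U\}$ is a bounded semialgebraic set and hence of finite measure, so $h_i(a,\cdot)$ coincides with $A_U$ whenever $U=V^{(i)}_a$.

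Next I would invoke the constructibility of parametric integrals. The function $\one_{V^{(i)}}$ is semialgebraic, hence constructible in the sense of \cite{MR2769219} (see Remark~\ref{rem:constructible}), so by \cite[Theorem~1.3]{MR2769219} each $h_i$ is again constructible, and therefore definable in the o-minimal expansion $\R_{\an,\exp}$ of $\R$ (Remark~\ref{rem:constructible}(1) together with \cite{MR1289495}). Applying Proposition~\ref{prop:monotonicity} to each $h_i$ yields a constant $N_i=N_i(d,k)$ such that $h_i(a,\cdot)$ changes monotonicity at most $N_i$ times for every $a\in\R^D$; setting $N=\max_i N_i=O_{d,k}(1)$ and recalling that every admissible $U$ is some $V^{(i)}_a$ with $A_U=h_i(a,\cdot)$ finishes the argument.

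The step I expect to be the crux is this passage through constructible functions: one cannot stay inside the semialgebraic world, since even for a bounded semialgebraic family the slice-volume function can be (a branch of) a logarithm, so Proposition~\ref{prop:monotonicity} must genuinely be used in a larger o-minimal structure, and the fact that parametric integrals of semialgebraic functions land in such a structure is precisely the nontrivial input \cite[Theorem~1.3]{MR2769219}. The remaining ingredients --- the uniform parametrization by effective quantifier elimination, and the tameness-in-families Proposition~\ref{prop:monotonicity} --- are comparatively routine bookkeeping.
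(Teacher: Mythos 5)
Your proposal is correct and follows essentially the same route as the paper: a uniform parametrization of all complexity-$\leq k$ semialgebraic subsets of $\R^{d+1}$ by finitely many definable families over a parameter space $\R^{D(d,k)}$, the constructibility of parametric integrals from \cite[Theorem~1.3]{MR2769219} to place the slice-volume functions in $\R_{\an,\exp}$, and Proposition~\ref{prop:monotonicity} for the uniform bound on monotonicity changes. The only cosmetic difference is that you attribute the parametrization step to effective Tarski--Seidenberg, whereas no quantifier elimination is needed there since the defining formulas are quantifier-free by definition; this does not affect the argument.
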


\begin{proof}[Proof of Lemma \ref{uncertaintysignchangelemma}]
\hide{
  First we observe that there exist an integer $D>0$ and a semi-algebraic set $V\subset \R^{D+d+1}$ depending on $d$ and $k$ such that 
  every $U$ under consideration is a ``fiber'' in $V$. More precisely, for every $U$ under consideration there is a $\eta = \eta(U) \in \R^D$ such that
  \begin{equation}
  V \cap (\{\eta\}\times \R^{d+1}) = \{\eta\} \times U.
  \end{equation}
  To see this, we first notice that to parametrize a semialgebraic set of complexity $\le k$ in $\R^{d+1}$ we only have $O_{d, k} (1)$ many ``degrees of freedom''. In other words we can write down finitely many finite systems of polynomial equations/inequalities on $\R^{d+1}$, treat the coefficients of the polynomials as parameters, and each $U$ can be seen as a union of sets, each of which is defined by one system for some particular choice of the coefficients. Now we put the coefficients together with the original coordinates in $\R^{d+1}$ to form a bigger set of coordinates in $\R^{D+d+1}$. Next note that when taking the union, the above collection of systems determines a semialgebraic set $V = V(d, k) \subset \R^{D+d+1}$
  . And by definition each $U$ is a fiber in $V$. The $\R^D$ here can be thought of as the ``space of parameters''.

  For simplicity, for $\eta \in \R^D$ let $V_{\eta}$ be the fiber (viewed as a subset in $\R^{d+1}$) such tht \begin{equation}
  \{\eta\} \times V_{\eta} = V \cap (\{\eta\}\times \R^{d+1}).
  \end{equation}
  Consider the function $A$ on $\R^{D+1} = \R^D \times \R$ defined as follows: For $\eta\in \R^D$ and $\beta\in \R$, we set
  \begin{equation}
  A(\eta, \beta):=\int_{\R^{d}} \one_{V_{\eta} \cap \{\xi_{d+1} = \beta\}} d\xi_1 \cdots d\xi_{d}
  \end{equation}
  whenever the above integral is finite, and $A(\eta, \beta)=0$ otherwise.

  The function $\one_V$ is semialgebraic and hence (globally) subanalytic (in the language of the two paragraphs before \cite[Definition 1.1]{MR2769219}). It is then
  constructible in $\R_{\an}$ in the sense of \cite[Definition 1.2]{MR2769219}. Here $\R_{\an}$ is the (o-minimal) structure mentioned right before \cite[Definition 1.1]{MR2769219} whose detailed definition can be found in, e.g., \cite{MR1289495}. By Theorem 1.3 in \cite{MR2769219}, $A(\eta, \beta)$ is a constructible function on $\R^{D+1}$. By the Definition 1.2 of constructibility in \cite{MR2769219} and the fact that the logarithmic function is the inverse of the exponential function, one sees that $A(\eta, \beta)$ is definable in the structure $(\R_{\an}, \exp)$ (again see e.g. \cite{MR1289495} for the definition), which is known to be o-minimal (\cite{MR1289495, MR1264338}).

  We just showed that $A(\eta, \beta)$ is a definable function in the o-minimal structure $(\R_{\an}, \exp)$. When one fixes $\eta$, the one-variable function $A(\eta, \beta)$ is also definable in $(\R_{\an}, \exp)$. By the Monotonicity Theorem (Theorem 1.2 in Chapter 3 of \cite{MR1633348}), each $A(\eta, \beta)$ (as a function of $\beta$) changes monotonicity finitely many times. We claim further that this finite number is uniformly bounded when $\eta$ varies.

  To see this claim, let $W \subset \R^{D+1}$ be given by
  \begin{equation}
  W= \{(\eta, \gamma): A(\eta, \beta) \text{ as a function of }\beta \text{ changes monotonicity at } \beta=\gamma\}
  \end{equation}
  Then $W$ is a definable set in the same structure $(\R_{\an}, \exp)$ since we have the first order formula description $W = W^+ \cap W^-$ where $$W^+ = \{(\eta, \gamma): \text{ there is no }\varepsilon > 0 \text{ such that } A(\eta, \beta_1) \leq A(\eta, \beta_2), \forall \gamma-\varepsilon < \beta_1 \leq \beta_2 < \gamma+\varepsilon\}$$ and $$W^- = \{(\eta, \gamma): \text{ there is no }\varepsilon > 0 \text{ such that } A(\eta, \beta_1) \geq A(\eta, \beta_2), \forall \gamma-\varepsilon < \beta_1 \leq \beta_2 < \gamma+\varepsilon\}.$$ The set $W$ has each fiber $W \cap \big(\{\eta\} \times \R\big)$ being a finite set by discussions in the last paragraph. By Corollary 3.7 in Chapter 3 of \cite{MR1633348}, the cardinality of this finite set is uniformly bounded by a constant $C = C_W$.

  To conclude our proof, we just note that by definition each $A_U$ in our problem is some $A(\eta, \cdot)$ for a fixed $\eta=\eta_U$. By our last claim, the number of changes of monotonicity of $A_U$ is finite and can be bounded by $C_W$. $W$ depends only on $A$ and $A$ depends only on $V$, which in turn depends only on $d$ and $k$. Hence $C_W$ is a number that only depends on $d$ and $K$.
}

First observe that for each $k,d \geq 0$
there exists $D= D(k,d), N = N(k,d)$, and
a finite set
\[
\Phi_{k,d} =\{\phi_1(Y_1,\ldots,Y_D,X_1,\ldots,X_{d+1}), \ldots, \phi_N(Y_1,\ldots,Y_D,X_1,\ldots,X_{d+1})\}
\]
where each $\phi_i$ is a quantifier-free $L$-formula, such that if $U \subset \R^{d+1} $
is a semi-algebraic set  of complexity bounded by $k$, then
there exists $1 \leq i \leq N$ and  $\eta \in \R^{D}$
such that
\[
U = \{(x_1,\ldots,x_{d+1}) \in \R^{d+1} \mid \R \models \phi_i(\eta,X_1,\ldots,X_{d+1})\}.
\]

Intuitively, the above property holds because by definition, our set $U$ 
can be described by an $L(\R)$-formula involving $O_{k, d}(1)$ many polynomials of degree $O_{k, d}(1)$, parameters in $\R$, logic connectives and constant symbols.

For each $i, 1 \leq i \leq N$, let
$V_i \subset \R^{D} \times \R^{d+1}$ be the set defined by $\phi_i$, and let
$\pi: \R^{D} \times \R^{d + 1} \rightarrow \R^{D}$ be the projection map to the first factor.

For $1 \leq i \leq N$,
consider the functions $A_i$ on $\R^{D+1} = \R^D \times \R$ defined as follows: For $\eta\in \R^D$ and $\beta\in \R$, we set
\begin{equation}
A_i(\eta, \beta):=\int_{\R^{d}} \one_{V_{i}}(\eta,\xi,\beta) d\xi
\end{equation}
whenever the above integral is finite, and $A(\eta, \beta)=0$ otherwise.

The functions $\one_{V_{i}}$ are semialgebraic and thus definable
in
$\R_{\mathrm{an}}$.
By Theorem 1.3 in \cite{MR2769219} (as mentioned in Remark \ref{rem:constructible}), $A_i(\eta, \beta)$ is a constructible function on $\R^{D+1}$.
Using the properties of constructible functions listed in Remark~\ref{rem:constructible},
we obtain that $A_i(\eta, \beta)$ is definable in $\R_{\an, \exp}$.

It now follows from Proposition~\ref{prop:monotonicity} that there exists $M_i = M_i(k,d) > 0$, such that
the number of times the function $A_i(\eta,\cdot), \eta \in \R^D$ changes monotonicity is bounded by $M_i$.
Let $M(k,d)  = \max_{1 \leq i \leq N} M_i$.
It follows that the number of times the function $A_U$ changes monotonicity is bounded by
$M(k,d)$, observing that there exists $i, 1\leq i \leq n$,
$\eta \in \R^D$, such that $A_U(\beta) = A_i(\eta,\beta)$ for $\beta \in \R$.
\end{proof}

With Lemma \ref{uncertaintysignchangelemma} in mind, we now prove Theorem \ref{201004lem2_1_general}.

\begin{proof}[Proof of Theorem \ref{201004lem2_1_general}]
Fix a constant $0<\delta_0< \frac{1}{2}$ (in our proof $\delta_0 = 0.1$ will work). In the proof we will suppress the dependence on the constant $\delta_0$.
By a change of variables, we have
\begin{multline*}
\Big(\int_{-\delta_0}^{\delta_0} e(\alpha)d\alpha\Big)\Big(\int_{[0, 1]^d} e(Q(\xi))d\xi\Big)
=
\int_{[0, 1]^d} \int_{-\delta_0}^{\delta_0} e(Q(\xi)+ \alpha) d\alpha d\xi
\\ =
\int_{[0, 1]^d} \int_{Q(\xi)-\delta_0}^{Q(\xi)+\delta_0} e(\beta) d\beta d\xi
=
\int_{\R} |\{\xi \in [0, 1]^d: Q(\xi) \in [\beta - \delta_0, \beta + \delta_0]\}|\cdot e(\beta) d\beta.
\end{multline*}
Here, $|\cdot|$ denotes the Lebesgue measure.
Note that $\int_{-\delta_0}^{\delta_0} e(\alpha)d\alpha$ is a nonzero constant (depending only on $\delta_0$) as $0 < \delta_0 < \frac{1}{2}$.
Hence, if we define a \emph{stationary set} to be
\begin{equation}
S_{Q; \delta_0} (\beta) = \{\xi \in [0, 1]^d: Q(\xi) \in [\beta - \delta_0, \beta + \delta_0]\},
\end{equation}
then we have
\begin{equation}
\Big|\int_{[0, 1]^d} e(Q(\xi))d\xi\Big| \sim \Big|\int_{\R} |S_{Q; \delta_0} (\beta)| e(\beta) d\beta\Big|.
\end{equation}
Since $Q$ is bounded, the function $|S_{Q; \delta_0}(\cdot)|$ is compactly supported. We claim that it is piecewise monotonic and that it only changes monotonicity $O_{d, k} (1)$ many times.
This claim follows from Lemma \ref{uncertaintysignchangelemma} in a straightforward way, and we postpone its justification to the end of this proof.

We now explain how to get the desired conclusion from the above claim.
We decompose $\R$ into a union of $O_{d, k} (1)$ many closed intervals $\bigcup_j I_j$ such that the interiors of $I_j$ are disjoint and that the function $|S_{Q; \delta_0}(\cdot)|$ is compactly supported and monotonic on each $I_j$.
For each $I_j$, the total variation of $|S_{Q; \delta_0}(\cdot)|$ on $I_j$ is bounded by $2\sup_{\beta\in \R} |S_{Q; \delta_0} (\beta)|$, uniformly in $j$. By integration by parts, we see
\begin{equation}
\Big|\int_{I_j} |S_{Q; \delta_0} (\beta)| e(\beta) d\beta\Big| \lesssim  \sup_{\beta\in \R} |S_{Q; \delta_0} (\beta)|,
\end{equation}
uniformly in $j$.
Hence, we get
\begin{equation}
\Big|\int_{\R} |S_{Q; \delta_0} (\beta)| e(\beta) d\beta\Big| \leq \sum_j \Big|\int_{I_j} |S_{Q; \delta_0} (\beta)| e(\beta) d\beta\Big| \lesssim_{d, k} \sup_{\beta\in \R} |S_{Q; \delta_0} (\beta)|\lesssim_{d, k} \epsilon
\end{equation}
by assumption. This is the desired conclusion.

Finally, we justify our claim that $|S_{Q; \delta_0}|$  is piecewise monotonic and that it only changes monotonicity $O_{d, k} (1)$ many times. Consider the set
\begin{equation}
\Gamma (Q) :=
\{(\xi, \beta) \in \R^{d}\times\R: \beta - \delta_0 \leq Q(\xi)\leq \beta + \delta_0, \xi \in [0, 1]^d\}.
\end{equation}
The set $\Gamma(Q)$ is bounded and can be described as a ``vertical'' neighborhood of the graph of $Q$.

\hide{
  {\color{brown}By quantifier elimination (i.e. Tarski's theorem), we see that $\Gamma(Q)$ is semialgebraic with complexity $O_{d, k} (1)$.} Now we apply Lemma \ref{uncertaintysignchangelemma} to the set $\Gamma(Q)$ and note that $S_{Q; \delta_0} (\beta) = A_{\Gamma (Q)} (\beta)$ in the notation of that lemma. By the conclusion of Lemma \ref{uncertaintysignchangelemma} we  deduce the claim in the beginning of this paragraph.
}
Observe that since $Q$ is a semi-algebraic function, $\mathrm{graph}(Q)$ is a semi-algebraic subset of
$\R^{d+1}$ and so defined by a $L(\mathbb{R})$-formula $\phi(X_1,\ldots,X_{d+1})$. Then
$\Gamma(Q)$ is defined by the formula
\[
\psi(X_1,\ldots,X_{d+1}) := (\exists Y) \phi(X_1,\ldots,X_{d},Y) \wedge (X_{d+1} \leq Y +\delta_0) \wedge (Y \leq X_{k+1} + \delta_0).
\]
Using an effective version of Tarski-Seidenberg theorem 
(that was mentioned in Subsection 2.1; see e.g. \cite[Theorem 14.16]{MR2248869} for a reference),
we obtain that $\psi$ is equivalent to a quantifier-free
formula $\widetilde{\psi}$ such that the number of polynomials and their degrees that occur in $\widetilde{\psi}$
is $O_{d,k}(1)$. And so $\Gamma(Q)$ is semialgebraic with complexity $O_{d, k} (1)$. Now we apply Lemma \ref{uncertaintysignchangelemma} to the set $\Gamma(Q)$, and note that $S_{Q; \delta_0} (\beta) = A_{\Gamma (Q)} (\beta)$ in the notation of that lemma. By the conclusion of Lemma \ref{uncertaintysignchangelemma}, we deduce the claim in the beginning of this paragraph.
\end{proof}

\begin{remark}
In Theorem \ref{201004lem2_1_general}, the amplitude  function of the oscillatory integral is taken to be the sharp cut-off $\one_{[0, 1]^d}$. For a compactly supported smooth amplitude function, one can apply a Fourier series expansion and then apply  Theorem \ref{201004lem2_1_general}. In applications, due to the rapid decay of Fourier coefficients, the extra summation that is brought in by Fourier series expansion usually will not cause any problem. For example, in Tarry's problem, one can insert a compactly supported smooth amplitude function in \eqref{sharp_cutoff}, and ask what the convergence exponent is. It is elementary to see that it coincides with the one of sharp cut-off, that is, $p_{2, k}$.
\end{remark}






\section{Theorem~\ref{main_tarry}: Reduction to a spread out case}
\label{sec:tarry-concentrated}

Sections~\ref{sec:tarry-concentrated} and \ref{sec:tarry-spread} are devoted to the proof of Theorem~\ref{main_tarry}.
From now on, we will fix $d=2$ and abbreviate $p_{2, k}$ to $p_k$. To simplify notation, we denote
\begin{equation}
\qk=\frac{1}{6}k(k+1)(k+2)+2.
\end{equation}
Under this notation, to prove Theorem \ref{main_tarry}, it is equivalent to prove that $p_k=\qk$ for every $k\ge 2$.\\

We will use $\extend(x)$ to denote the function $E_{[0, 1]^d}^{(d, k)} 1(x)$. Our goal is to show that
\begin{equation}
|\{x\in \R^N: |E_k 1|(x)\in [\delta, 2\delta]\}|
\lesim_{k}
\abs{\log_{+} \delta}^{k-1} \delta^{-\qk}
\end{equation}
for every $\delta\leq 1$, where
\begin{equation}
\log_+\delta:=\log \big(1+\frac{1}{\delta}\big).
\end{equation}
This will imply the upper bound of $p_{k}$ that $p_k\le \qk$.
We will use $P(\xi; x)$ to denote the polynomial $x\cdot \phi_{k}(\xi)$.
For a polynomial $P: [0, 1]^2\to \R$ and real numbers $c>0$ and $\mu$, define
\begin{equation}
Z_c(P, \mu):=\{\xi\in [0, 1]^2: \mu\le P(\xi)\le \mu+c\}.
\end{equation}
For a given $x$, since the measure of $Z_1(P(\cdot\ ; x), \mu)$ depends continuously on $\mu$, we can choose $\mu_x$ such that
\begin{equation}\label{201219e7_2}
|Z_1(P(\cdot\ ; x), \mu_{x})| = \sup_{\mu\in\R} |Z_1(P(\cdot\ ; x), \mu)|.
\end{equation}
To simplify notation, we denote \begin{equation}
\gammax:=Z_1(P(\cdot\ ; x), \mu_x).
\end{equation}
From Theorem  \ref{201004lem2_1_general}, we immediately obtain that there exists $C_k>0$ such that for every $\delta\leq 1$, it holds that
\begin{equation}\label{201118e3_5}
\{x\in \R^N: |E_k 1|(x)\in [\delta, 2\delta]\}
\subseteq L_{\delta/C_k},
\end{equation}
where
\[
L_{\delta} := \{x\in \R^N: |\gammax|> \delta\}.
\]
We will show that there exists $C_{k}<\infty$ such that
\begin{equation}
\label{eq:Ldelta-est}
|L_{\delta}|\le C_{k} \abs{\log_{+} \delta}^{k-1} \delta^{-\qk},
\end{equation}
for every $\delta \in (0,1]$.
This will be proven via an inductive argument.
More precisely, we will show that there exists $C_{k, 1}>0$ such that for every sufficiently large dyadic number $K\geq 1$ (picking $K\ge 2^{10k}$ will be enough), we can find another constant $C_{k, K}$ such that
\begin{equation}\label{eq:Ldelta-recursion}
\abs{L_{\delta}} \leq
K \cdot K^{k(k+1)(k+2)/6} \abs{L_{\delta K/C_{k, 1}}}
+
C_{k,K} \abs{\log_{+} \delta}^{k-1} \delta^{-k(k+1)(k+2)/6-2}.
\end{equation}
Since $L_{1} = \emptyset$, this implies our claim \eqref{eq:Ldelta-est} after choosing $K$ so large that
\[
K^{k(k+1)(k+2)/6+1} < (K/C_{k, 1})^{k(k+1)(k+2)/6+2}
\]
and unrolling the recursion \eqref{eq:Ldelta-recursion}.

\begin{rem}
The main contribution in the recursive estimate \eqref{eq:Ldelta-recursion} comes from the second summand on the right-hand side.
This is due to the fact that the power of $K$ in the first summand is strictly smaller than $p_{k}$.
It is this fact that allows us to use a fixed value of $K$ and leads to only a logarithmic loss in \eqref{eq:Ldelta-est}.
\end{rem}

\begin{remark}
The choice of $C_{k, 1}$ will be made in Lemma~\ref{lem:large-projection}.
\end{remark}

Let $K\gg 1$ be large dyadic number to be determined.
Let $\mc{S}_K$ be the collection of all (vertical) dyadic rectangles $S_K\subset [0, 1]^2$ of size $1\times 1/K$.
We have $|\mc{S}_K|=K$.
Let $C_{k, 1}>1$ be a large constant to be determined. We write $L_{\delta}$ as a disjoint union
\begin{equation}
L_{\delta}=L_{\delta, \mathrm{con}}\cup L_{\delta, \mathrm{sprd}},
\end{equation}
where
\begin{equation}
\begin{split}
\quad
L_{\delta}(S_{K}) := \{x\in L_{\delta}: |\gammax\cap S_K| > \delta/C_{k, 1}\}.
\end{split}
\end{equation}
\[
L_{\delta, \mathrm{con}}:=
\bigcup_{S_{K} \in \mc{S}_{K}} L_{\delta}(S_{K}),
\]
We will control the sizes of $L_{\delta, \mathrm{con}}$ and $L_{\delta, \mathrm{sprd}}$ separately. For $x \in L_{\delta}$, we say that it is in the \emph{concentrated case} if $x \in L_{\delta, \mathrm{con}}$ and that it is in the \emph{spread out case} if $x \in L_{\delta, \mathrm{sprd}}$.

\begin{rem}
The concentrated/spread out dichotomy here is inspired by the broad-narrow dichotomy of Bourgain-Guth in \cite{MR2860188}. We leave the comparison to the interested readers.
\end{rem}


In the remaining part of this section we will control $L_{\delta, \mathrm{con}}$.
Let us start with estimating $|L_{\delta}(S_K)|$ for a fixed $S_K$.
Without loss of generality, take $S_K=[0, 1/K]\times [0, 1]$.
For an $x\in \R^N$, define
\begin{equation}
\bar{x}:=(x_{\beta}/K^{\beta_{1}})_{1\le |\beta|\le k},
\end{equation}
with $\beta=(\beta_1, \beta_2)\in \N^2$, so that $P((\xi_{1},\xi_{2}); x) = P((K\xi_{1},\xi_{2}); \bar{x})$.
Then, for $x\in L_{\delta}(S_{K})$, we have
\begin{equation}
|Z_1(P(\cdot\ ; \bar{x}), \mu_x)|
=
K |\gammax \cap S_{K}|
>
K \delta/C_{k, 1}.
\end{equation}
By definition of $L_{\cdot}$, we have
\begin{equation}
\{\bar{x}\in \R^N: |Z_{\bar{x}}| > K\delta/C_{k, 1} \}
= L_{K\delta/C_{k, 1}}.
\end{equation}
By scaling, this implies
\begin{equation}
|L_{\delta}(S_K)|
\le
\abs{L_{K\delta/C_{k, 1}}} \prod_{\abs{\beta}\leq k} K^{\beta_{1}}
=
K^{k(k+1)(k+2)/6} \abs{L_{K\delta/C_{k, 1}}}.
\end{equation}
We sum over $S_K$ and obtain
\begin{equation}
|L_{\delta,\mathrm{con}}|
\le
K \cdot K^{k(k+1)(k+2)/6} \abs{L_{K\delta/C_{k, 1}}}.
\end{equation}
This finishes the argument for the concentrated case.

\section{Theorem~\ref{main_tarry}: The spread out case}
\label{sec:tarry-spread}

In this section, we will show that
\begin{equation}
\abs{L_{\delta, \mathrm{sprd}}}\le C_{k,K} \abs{\log_{+} \delta}^{k-1} \delta^{-k(k+1)(k+2)/6-2},
\end{equation}
thus finishing the proof of the desired estimate \eqref{eq:Ldelta-recursion}. Next we sketch the proof ideas before working out the details,

For $x \in L_{\delta, \mathrm{sprd}} \subset L_{\delta}$, recall that the stationary set $Z_x = \{\xi \in [0, 1]^2: \mu_x \leq P(\xi; x) \leq \mu_x + 1\}$ has measure at least $\delta$. Let us think about what such a $Z_x$ may look like. Note that $Z_x$ has bounded complexity. In the second subsection, we will prove that a semialgebraic set of bounded complexity in $[0, 1]^d$ that has measure at least $\delta > 0$ is ``not far from'' a union of $\delta$-cubes. This is essentially Lemma \ref{lem:lagrange-interpolation} in two dimensions.\footnote{There is a higher dimensional version that can be proved in an identical way. We omit that more general version here since it is irrelevant to our current problem.} Before proving such a lemma we will need some preparations (done in the first subsection) using elementary real algebraic geometry.

Since $Z_x$ has bounded complexity, the (quantitative) Tarski-Seidenberg theorem shows that its projection to the $\xi_1$-axis is a union of boundedly many points and intervals. If we pretend that $Z_x$ is a union of $\delta$-squares, by the fact that it is spread out in the $\xi_1$ direction (thus hitting many vertical $1/K$-strips), we see that the projection of $Z_x$ to the $\xi_1$-axis has to contain a ``long interval'' (with constant length)  if we choose $C_{k, 1}$ large enough. This means we can find an interval $I \subset [0, 1]$ on the  $\xi_1$-axis of constant length such that for all dyadic interval $\tilde{I}$ of length $\delta$ in $I$, there is a $\delta$-square $\Delta_{\tilde{I}}$ in $Z_x$ whose projection to the $\xi_1$-axis  is $\tilde{I}$. In real life,  $Z_x$ is not honestly a union of $\delta$-squares but we still have something very similar to the conclusion above thanks to Lemma \ref{lem:large-projection}.

Finally, we fix a constant ($k+1$, to be more accurate) many evenly-spaced $\tilde{I}$ and look at all possible tuples $(\Delta_{\tilde{I}})_{\tilde{I}}$. For each tuple, if every $\delta$-square in it is essentially contained in $Z_x$, we will see that $x$ has to be contained in a very specific rectangular box (depending on the particular tuple $(\Delta_{\tilde{I}})_{\tilde{I}}$). Geometrically, this is the dual box to the convex hull of all caps corresponding to all $\Delta_{\tilde{I}}$ in that tuple. Therefore the union of all such rectangular boxes will contain $L_{\delta, \mathrm{sprd}}$. Simply by adding up the volume of all the rectangular boxes we get the desired upper bound of $\abs{L_{\delta, \mathrm{sprd}}}$ and finish the proof. This is carried out in the last subsection.

\begin{rem}
For general $S_{d, k}$ when $d \geq 3$, we expect this proof framework to continue working but also expect a lot more new technical difficulties and hence do not pursue the more general problem here. See Remark \ref{rem:higher-dim-tarry} for a discussion.
\end{rem}

\subsection{Shifts of semialgebraic sets}

\begin{lemma}[Finite boundary]
\label{lem:finite-boundary}
For every $\kappa\in\N$ there exists $B_{\kappa}<\infty$ such that every semialgebraic subset $\Gamma \subseteq \R$ of complexity $\leq \kappa$ has at most $B_{\kappa}$ boundary points.
\end{lemma}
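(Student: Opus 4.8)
The plan is to reduce the statement to the one-dimensional case of the Tarski--Seidenberg theorem with complexity bounds, which was already recalled in Subsection~\ref{subsec:logic}. First I would observe that a semialgebraic set $\Gamma \subseteq \R$ of complexity $\leq \kappa$ is, by definition, a finite union of sets each defined by a conjunction of conditions of the form $P = 0$ or $Q_j > 0$ with $P, Q_j \in \R[X]$; the total number of polynomials involved is at most $\kappa$ and each has degree at most $\kappa$. In particular $\Gamma$ is defined by a quantifier-free $L$-formula $\phi(X)$ in which at most $\kappa$ polynomials of degree $\leq \kappa$ appear. Since $\Gamma$ is a subset of $\R$ and an element of $\mathcal{S}_1$ for the o-minimal structure $\mathbb{R}$, it is a finite union of points and open intervals (Property~\eqref{itemlabel:3}); the boundary points of $\Gamma$ are then among the finitely many endpoints of these intervals together with the isolated points, so it only remains to bound their number in terms of $\kappa$.

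The key step is to control that number. Each polynomial $P_i$ appearing in $\phi$ has at most $\kappa$ real roots (being of degree $\leq \kappa$), so the set $R$ of all real roots of all the $P_i$ has cardinality at most $\kappa^2$. On each open interval of the complement $\R \setminus R$, every $P_i$ has constant sign, hence the truth value of $\phi(x)$ is constant there; thus $\Gamma$ restricted to $\R \setminus R$ is a union of some of these $\leq \kappa^2 + 1$ open intervals. Consequently $\Gamma$ differs from such a union by at most finitely many points of $R$, and every boundary point of $\Gamma$ lies in $R$. Therefore the number of boundary points of $\Gamma$ is at most $|R| \leq \kappa^2$, and we may take $B_\kappa = \kappa^2$.

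An alternative, slightly softer route — which I would mention as a remark rather than carry out — is to invoke the uniform finiteness (tameness in families) encoded in Proposition~\ref{prop:tame}: fix $D = D(\kappa)$ and finitely many $L$-formulas $\phi_1, \dots, \phi_N$ with parameters such that every complexity-$\leq\kappa$ subset of $\R$ is $\{x : \mathbb{R} \models \phi_i(\eta, x)\}$ for some $i$ and some $\eta \in \R^D$, exactly as in the proof of Lemma~\ref{uncertaintysignchangelemma}. Applying Proposition~\ref{prop:tame} to the projection $(\eta, x) \mapsto \eta$ restricted to the set defined by $\phi_i$ gives a uniform bound $N_i$ on the number of connected components of each fiber, and taking $B_\kappa = \max_i (2 N_i + 1)$ works; but this is overkill since the elementary root-counting argument above already gives the explicit bound $\kappa^2$. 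I expect no real obstacle here: the only mild point of care is making sure the notion of complexity ``$\leq \kappa$'' is unwound correctly so that both the number and the degrees of the polynomials are controlled by $\kappa$, which is exactly how complexity was defined in Subsection~\ref{subsec:logic}.
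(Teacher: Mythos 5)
Your proof is correct and follows essentially the same route as the paper's: both arguments reduce to the fact that the finitely many polynomials of bounded degree appearing in the defining formula have boundedly many real roots, so $\Gamma$ is a union of boundedly many points and intervals. Your version of collecting all roots into a single set $R$ and noting that the truth value of the formula is constant on each component of $\R\setminus R$ is a slightly more streamlined packaging and yields the explicit constant $B_\kappa=\kappa^2$, but the underlying idea is identical.
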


\begin{proof}[Proof of Lemma \ref{lem:finite-boundary}]
By definition, $\Gamma$ is a union of $O_{\kappa} (1)$ many subsets $\Gamma_j$ where each $\Gamma_j$ is a set defined by $O_{\kappa} (1)$ many polynomial equations or inequalities of degree $O_{\kappa} (1)$. This implies that each $\Gamma_j$ is an intersection of $O_{\kappa} (1)$ many larger sets, each larger set being a union of $O_{\kappa} (1)$ many points or intervals. Hence each $\Gamma_j$ is a union of $O_{\kappa} (1)$ many points or intervals. Taking the union, we deduce that $\Gamma$ is also a union of $O_{\kappa} (1)$ many points or intervals and the conclusion follows.
\end{proof}

\begin{corollary}[Shifts of a bounded complexity set]
\label{cor:shift-of-bdd-complexity}
Let $\Gamma \subseteq [0,1]$ be a semialgebraic set of complexity $\leq \kappa$.
Then, for every $\delta > 0$, we have
\[
\abs{ (\Gamma + [-\delta,\delta]) \setminus \Gamma } \lesssim_{\kappa} \delta.
\]
\end{corollary}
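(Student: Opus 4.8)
The plan is to reduce Corollary~\ref{cor:shift-of-bdd-complexity} to Lemma~\ref{lem:finite-boundary} by exploiting the structure of one-dimensional semialgebraic sets. By Lemma~\ref{lem:finite-boundary}, the set $\Gamma$ has at most $B_\kappa$ boundary points; since $\Gamma \subseteq [0,1]$ is also a finite union of points and open intervals (a basic property of semialgebraic subsets of $\R$), it follows that $\Gamma$ differs from a disjoint union of at most $O_\kappa(1)$ intervals by a finite set of measure zero. Write $\Gamma = \bigcup_{j=1}^{m} J_j \cup F$ where $m \lesssim_\kappa 1$, the $J_j$ are pairwise disjoint intervals, and $F$ is finite. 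The measure-zero finite set $F$ contributes nothing, so it suffices to bound $\abs{(\bigcup_j J_j + [-\delta,\delta]) \setminus \bigcup_j J_j}$.

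The key observation is that for a single interval $J$ with endpoints $a \le b$, the set $(J + [-\delta,\delta]) \setminus J$ is contained in $[a-\delta, a] \cup [b, b+\delta]$, which has measure at most $2\delta$. Hence $(\bigcup_j J_j + [-\delta,\delta]) \setminus \bigcup_j J_j \subseteq \bigcup_j \big((J_j + [-\delta,\delta]) \setminus J_j\big)$ (using that enlarging one interval can only add points not already in that interval, though possibly in another — but this only helps, i.e. the left side is contained in the union of the per-interval differences minus the other intervals, hence certainly in the union of the per-interval enlargement-differences). Therefore
\[
\abs{(\Gamma + [-\delta,\delta]) \setminus \Gamma} \le \sum_{j=1}^{m} \abs{(J_j + [-\delta,\delta]) \setminus J_j} \le 2m\delta \lesssim_\kappa \delta,
\]
which is exactly the claimed bound.

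I do not anticipate a genuine obstacle here; the only point requiring a little care is the set-theoretic bookkeeping in the inclusion $(\bigcup_j J_j + [-\delta,\delta]) \setminus \bigcup_j J_j \subseteq \bigcup_j \big((J_j + [-\delta,\delta]) \setminus \bigcup_i J_i\big) \subseteq \bigcup_j \big((J_j + [-\delta,\delta]) \setminus J_j\big)$, which holds because a point in the left-hand side lies in some $J_j + [-\delta,\delta]$ but in no $J_i$. One should also note that the number $m$ of intervals is controlled: the boundary points from Lemma~\ref{lem:finite-boundary} partition $[0,1]$ into at most $B_\kappa + 1$ subintervals, on each of which $\Gamma$ is (up to a measure-zero set) either all-in or all-out, so $m \le B_\kappa + 1$. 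Combining, the implicit constant can be taken to be $2(B_\kappa + 1)$, completing the proof.
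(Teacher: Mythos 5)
Your proof is correct and takes essentially the same route as the paper, whose entire argument is the observation that Lemma~\ref{lem:finite-boundary} makes $\Gamma$ a union of $O_{\kappa}(1)$ intervals, from which the bound is immediate; you have merely supplied the bookkeeping. One small inaccuracy: the finite set $F$ does not ``contribute nothing'' --- its $\delta$-enlargement $F+[-\delta,\delta]$ has measure up to $2\delta\,|F|$ --- but this is harmless since $|F|\lesssim_{\kappa}1$ (equivalently, treat isolated points as degenerate intervals, to which your $2\delta$ per-interval bound still applies).
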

\begin{proof}[Proof of Corollary \ref{cor:shift-of-bdd-complexity}]
By Lemma~\ref{lem:finite-boundary}, $\Gamma$ is the union of $O_{\kappa} (1)$ many intervals, which immediately implies the desired bound.
\end{proof}

\begin{lemma}
\label{lem:large-intersection-of-shifts}
For every $k,\kappa\in\N$, there exists a small constant $c_{k,\kappa}>0$ such that, for every $\delta>0$ and every semi\-algebraic set $\Gamma \subseteq [0,1]^{2}$ with complexity $\leq\kappa$ and measure $\abs{\Gamma}>\delta$, we have
\[
\abs[\Big]{\bigcap_{a \in \Set{0,\dotsc,k}^{2}} (\Gamma - c_{k,\kappa}\delta a)} > \delta/2.
\]
\end{lemma}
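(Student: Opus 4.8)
The plan is to reduce the two-dimensional statement to a one-dimensional statement about the measure of a symmetric difference, and then to control that symmetric difference by a quantitative version of Corollary \ref{cor:shift-of-bdd-complexity}, now applied to a fibered family rather than a single set. The starting observation is that
\[
\abs[\Big]{\bigcap_{a \in \Set{0,\dotsc,k}^{2}} (\Gamma - c\delta a)}
\geq
\abs{\Gamma} - \sum_{a \in \Set{0,\dotsc,k}^{2}} \abs{\Gamma \setminus (\Gamma - c\delta a)}
=
\abs{\Gamma} - \sum_{a} \abs{(\Gamma + c\delta a) \setminus \Gamma},
\]
where I abbreviate $c = c_{k,\kappa}$. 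Since $\Set{0,\dotsc,k}^{2}$ has $(k+1)^{2}$ elements and each shift vector $c\delta a$ has entries bounded by $ck\delta$, it suffices to show
\[
\abs{(\Gamma + v) \setminus \Gamma} \lesssim_{\kappa} \abs{v}_\infty
\]
for any vector $v$, with an implicit constant depending only on $\kappa$; then choosing $c_{k,\kappa}$ small enough in terms of $k$ and $\kappa$ makes the total error at most $\delta/2$, and since $\abs{\Gamma} > \delta$ the conclusion follows. So the entire problem is the two-dimensional shift estimate with a constant uniform over semialgebraic sets of bounded complexity.

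To prove $\abs{(\Gamma + v)\setminus\Gamma} \lesssim_\kappa \abs{v}_\infty$, I would first handle a shift $v = (t, 0)$ purely in the $\xi_1$-direction and a shift $v = (0,s)$ purely in the $\xi_2$-direction separately, since a general shift is a composition of the two and $\abs{(\Gamma + v_1 + v_2)\setminus \Gamma} \leq \abs{(\Gamma + v_1 + v_2)\setminus(\Gamma + v_2)} + \abs{(\Gamma + v_2)\setminus \Gamma}$, with the first term equal to $\abs{(\Gamma' + v_1)\setminus \Gamma'}$ for the translate $\Gamma' = \Gamma + v_2$, which has the same complexity. For the shift in the $\xi_2$-direction, Fubini lets me write $\abs{(\Gamma + (0,s))\setminus\Gamma} = \int \abs{(\Gamma_{\xi_1} + s)\setminus \Gamma_{\xi_1}}\, d\xi_1$, where $\Gamma_{\xi_1} \subseteq [0,1]$ is the vertical fiber. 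Each fiber is semialgebraic of complexity $O_\kappa(1)$ (quantifier-free formulas specialized at a parameter do not increase complexity), so Corollary \ref{cor:shift-of-bdd-complexity} gives $\abs{(\Gamma_{\xi_1} + s)\setminus\Gamma_{\xi_1}} \lesssim_\kappa \abs{s}$ for each fiber, with a uniform constant; integrating over $\xi_1 \in [0,1]$ gives $\lesssim_\kappa \abs{s}$. The shift in the $\xi_1$-direction is the symmetric statement with the roles of the coordinates exchanged.

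The main obstacle I anticipate is making the per-fiber bound genuinely \emph{uniform} in the parameter $\xi_1$ — i.e. ensuring that the number of boundary points of $\Gamma_{\xi_1}$, and hence the implicit constant in Corollary \ref{cor:shift-of-bdd-complexity}, does not blow up for special values of $\xi_1$. This is exactly the kind of uniformity that the effective Tarski--Seidenberg theorem (cited in Subsection 2.1, e.g.\ \cite[Theorem 14.16]{MR2248869}) provides: one writes $\Gamma$ via a quantifier-free formula $\phi(\xi_1,\xi_2)$ in $O_\kappa(1)$ polynomials of degree $O_\kappa(1)$, and for each fixed $\xi_1$ the fiber is defined by $\phi(\xi_1, \cdot)$, a formula in one variable whose polynomials have degree $O_\kappa(1)$ regardless of the value of $\xi_1$; Lemma \ref{lem:finite-boundary} then bounds the number of boundary points by $B_{O_\kappa(1)}$ uniformly. (A minor point to be careful about: fibers over a finite exceptional set of $\xi_1$ could in principle degenerate, but that set has measure zero and does not affect the integral.) Once this uniformity is in hand, the rest is the elementary union-bound and Fubini bookkeeping sketched above.
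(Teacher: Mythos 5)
Your proof is correct and follows essentially the same route as the paper's: both rest on applying Corollary \ref{cor:shift-of-bdd-complexity} fiberwise (with the constant uniform over fibers because specializing a bounded-complexity quantifier-free formula at one coordinate keeps the complexity bounded) and integrating via Fubini. The only difference is bookkeeping — you use a union bound over all $(k+1)^2$ shifts and so only ever need the complexity of $\Gamma$ and its translates, whereas the paper intersects coordinate-by-coordinate and tracks the complexity $\kappa'$ of the intermediate intersection; both work equally well.
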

\begin{proof}[Proof of Lemma \ref{lem:large-intersection-of-shifts}]
Let $e_{1}=(1,0)$ and $e_{2}=(0,1)$ be the unit vectors. Let $c=c_{k, \kappa}>0$ be a small number that is to be chosen. For any $\delta>0$, the set
\[
\bigcap_{a_{1} \in \Set{0,\dotsc,k}} (\Gamma - c\delta a_{1}e_{1})
\]
will be a semialgebraic set of complexity at most $\kappa' = \kappa'(k,\kappa)$.
Using Corollary~\ref{cor:shift-of-bdd-complexity} in the first variable and integrating, we see that
\[
\abs[\big]{ \bigcap_{a_{1} \in \Set{0,\dotsc,k}} (\Gamma - c\delta a_{1}e_{1}) } \geq \delta - C_{\kappa} c \delta k,
\]
for some large constant $C_{\kappa}$ depending only on $\kappa$.  If $c \leq 1/(4kC_{\kappa})$, this will be $\geq 3\delta/4$. Using Corollary~\ref{cor:shift-of-bdd-complexity} in the second variable and integrating, we see that
\[
\abs[\big]{ \bigcap_{a \in \Set{0,\dotsc,k}^{2}} (\Gamma - c\delta a) }
=
\abs[\big]{ \bigcap_{a_{2} \in \Set{0,\dotsc,k}} \bigl( \bigcap_{a_{1} \in \Set{0,\dotsc,k}^{2}} (\Gamma - c\delta a_{1}e_{1}) \bigr) - c\delta a_{2}e_{2}}
\geq
3\delta/4 - C_{\kappa'} c \delta k,
\]
for some large constant $C_{\kappa'}$ depending only on $\kappa'$. If $c \leq 1/(4kC_{\kappa'})$, then this is $\geq \delta/2$.
Summarizing, it suffices to take $c_{k,\kappa} := 1/\max(4kC_{\kappa},4kC_{\kappa'})$.
\end{proof}

\begin{rem}
Lemma \ref{lem:large-intersection-of-shifts} is also true when the dimension $2$ is replaced by an arbitrary $d$. One just needs to repeat the procedure in the proof $d$ times. We do not need that more general result in this paper but record it here as it may be of independent interest.
\end{rem}

\subsection{Large projections of stationary sets}\label{subsection4_1}
In this subsection, we fix $x\in L_{\delta, \mathrm{sprd}}$, and discuss some geometry of a particular stationary set $\gammax$.

We apply Lemma~\ref{lem:large-intersection-of-shifts} to the set $\gammax$. Its complexity is bounded by a constant depending on $k$. Therefore, we can find a small constant $c_k>0$ such that for the set
\begin{equation}\label{210121e4_2}
\tilde{Z}_{x} :=
\bigcap_{a \in \Set{0,\dotsc,k}^{2}} \bigl( \gammax - c_{k}\delta a \bigr),
\end{equation}
we have $\abs{\tilde{Z}_{x}} > \delta/2$. If we also know that $x\in L_{\delta, \mathrm{sprd}}$, then we will see that the set $\tilde{Z}_x$ has a ``large'' projection in the horizontal axis.

\begin{lem}
\label{lem:many-strips}
For every $x\in L_{\delta, \mathrm{sprd}}$, it holds that
\begin{equation}
\abs{\Set{S_{K} \in \calS_{K} \given \tilde{Z}_{x} \cap S_{K} \neq \emptyset}} \geq C_{k,1}/2.
\end{equation}
\end{lem}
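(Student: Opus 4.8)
The plan is a short pigeonhole argument built directly on the definitions plus Lemma~\ref{lem:large-intersection-of-shifts}. First I would unwind what it means for $x$ to be in the spread-out case: since $L_{\delta,\mathrm{sprd}} = L_{\delta}\setminus L_{\delta,\mathrm{con}}$ and $L_{\delta,\mathrm{con}} = \bigcup_{S_K\in\mc{S}_K} L_{\delta}(S_K)$, an $x\in L_{\delta,\mathrm{sprd}}$ lies in $L_{\delta}$ but in no $L_{\delta}(S_K)$; by the definition of $L_{\delta}(S_K)$ this is exactly the uniform per-strip bound
\begin{equation}
\abs{\gammax\cap S_K}\leq \delta/C_{k,1}\qquad\text{for every }S_K\in\mc{S}_K.
\end{equation}

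Next I would collect two facts about the shrunken set $\tilde{Z}_x$ of \eqref{210121e4_2}. Taking $a=(0,0)$ in the intersection shows $\tilde{Z}_x\subseteq \gammax$, so $\abs{\tilde{Z}_x\cap S_K}\leq\abs{\gammax\cap S_K}\leq\delta/C_{k,1}$ for every strip. On the other hand, $\gammax$ has complexity $O_k(1)$, so Lemma~\ref{lem:large-intersection-of-shifts} applies to it and yields $\abs{\tilde{Z}_x}>\delta/2$. Now the $K$ rectangles of $\mc{S}_K$ tile $[0,1]^2$ up to a set of measure zero, hence $\abs{\tilde{Z}_x}=\sum_{S_K\in\mc{S}_K}\abs{\tilde{Z}_x\cap S_K}$, and only the strips $S_K$ with $\tilde{Z}_x\cap S_K\neq\emptyset$ contribute a nonzero term. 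Writing $m$ for the number of such strips, we get $\delta/2<\abs{\tilde{Z}_x}\leq m\cdot\delta/C_{k,1}$, i.e.\ $m>C_{k,1}/2$, which is the claimed inequality (in fact with strict inequality).

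There is essentially no obstacle in this lemma; the only points that need a little care are (i) correctly translating ``$x$ is in the spread-out case'' into the uniform per-strip upper bound $\abs{\gammax\cap S_K}\leq\delta/C_{k,1}$ coming from $x\notin L_{\delta}(S_K)$, and (ii) noting $\tilde{Z}_x\subseteq\gammax$ so that this upper bound is inherited by $\tilde{Z}_x$. Once those are in place, the lower bound $\abs{\tilde{Z}_x}>\delta/2$ supplied by Lemma~\ref{lem:large-intersection-of-shifts} closes the argument by a one-line counting estimate.
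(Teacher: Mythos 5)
Your proof is correct and follows essentially the same route as the paper: unwind $x\notin L_{\delta}(S_K)$ into the per-strip bound, note $\tilde{Z}_x\subseteq Z_x$, invoke the lower bound $\abs{\tilde{Z}_x}>\delta/2$ from Lemma~\ref{lem:large-intersection-of-shifts}, and pigeonhole over the strips. No issues.
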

\begin{proof}[Proof of Lemma \ref{lem:many-strips}]
Let $\tilde{\calS}_{K} \subseteq \calS_{K}$ be the set consisting of those strips that have non-empty intersection with $\tilde{Z}_{x}$.
Using that $x \not\in L_{\delta}(S_{K})$ for any $S_{K} \in \calS_{K}$, we obtain
\[
\delta/2 < \abs{\tilde{Z}_{x}}
=
\sum_{S_{K} \in \tilde{\calS}_{K}} \abs{\tilde{Z}_{x} \cap S_{K}}
\leq
\sum_{S_{K} \in \tilde{\calS}_{K}} \abs{Z_{x} \cap S_{K}}
\leq
\sum_{S_{K} \in \tilde{\calS}_{K}} \delta/C_{k,1}
=
\delta \abs{\tilde{\calS}_{K}}/C_{k,1}.
\]
Rearranging the terms gives the claim.
\end{proof}

\begin{lemma}
\label{lem:large-projection}
If $C_{k,1}$ is large enough depending on $k$, then, for every $x\in L_{\delta, \mathrm{sprd}}$, the projection $\pi_{1}\tilde{Z}_{x}$ contains a dyadic interval of length $1/K$. Here $\pi_{1}: \R^2\to \R$ denote the projection to the first variable.
\end{lemma}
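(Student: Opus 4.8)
The plan is to pass from $\tilde{Z}_{x}$ to its projection $\Gamma := \pi_{1}\tilde{Z}_{x}\subseteq[0,1]$ and to exploit that $\Gamma$ has complexity bounded purely in terms of $k$. First I would observe that $\gammax = Z_{1}(P(\cdot\,;x),\mu_{x})$ is the closed semialgebraic set cut out by the two polynomial inequalities $\mu_{x}\le P(\xi;x)\le\mu_{x}+1$ together with $\xi\in[0,1]^{2}$, hence has complexity $O_{k}(1)$; each translate $\gammax - c_{k}\delta a$ then has the same complexity bound, and the finite intersection $\tilde{Z}_{x}$ from \eqref{210121e4_2} is again semialgebraic with complexity $O_{k}(1)$, a bound that does not depend on $\delta$ or $K$. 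Applying the effective Tarski--Seidenberg theorem recalled in Subsection~2.1 to the projection, $\Gamma$ is a semialgebraic subset of $\R$ of complexity $O_{k}(1)$; being the continuous image of a compact set, $\Gamma$ is also compact. Lemma~\ref{lem:finite-boundary} then supplies a constant $B_{k}$, depending only on $k$, bounding the number of boundary points of $\Gamma$.

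Next I would translate both the hypothesis and the desired conclusion into statements about $\Gamma$. Each strip $S_{K}\in\calS_{K}$ is of the form $S_{K}=I_{S_{K}}\times[0,1]$, where the $I_{S_{K}}$ are the $K$ dyadic intervals of length $1/K$ partitioning $[0,1]$; moreover $\tilde{Z}_{x}\cap S_{K}\neq\emptyset$ holds precisely when $\Gamma\cap I_{S_{K}}\neq\emptyset$, and $\pi_{1}\tilde{Z}_{x}$ contains a dyadic interval of length $1/K$ precisely when $I_{S_{K}}\subseteq\Gamma$ for some $S_{K}$. So Lemma~\ref{lem:many-strips} tells us that at least $C_{k,1}/2$ of these dyadic intervals meet $\Gamma$, and the goal becomes to exhibit one of them that is contained in $\Gamma$.

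The crux is a short combinatorial observation. If a dyadic interval $I$ of length $1/K$ satisfies $I\cap\Gamma\neq\emptyset$ but $I\not\subseteq\Gamma$, then $I$ meets both $\Gamma$ and its complement, so by connectedness $I$ contains a boundary point of $\Gamma$. Since every point of $\R$ lies in at most two dyadic intervals of length $1/K$, at most $2B_{k}$ of the dyadic intervals that meet $\Gamma$ can fail to be contained in $\Gamma$. Consequently, if $C_{k,1}$ is chosen large enough that $C_{k,1}>4B_{k}$ --- and this is precisely the choice of the constant $C_{k,1}$ that was left open earlier --- then among the $\geq C_{k,1}/2$ dyadic intervals of length $1/K$ meeting $\Gamma$ at least one must be contained in $\Gamma = \pi_{1}\tilde{Z}_{x}$, which is the assertion of the lemma.

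I do not expect a real obstacle. The only steps requiring care are the bookkeeping that $\tilde{Z}_{x}$, and hence its projection $\Gamma$, has complexity bounded solely in terms of $k$ --- which is exactly what the effective form of the Tarski--Seidenberg theorem quoted earlier provides, together with Lemma~\ref{lem:finite-boundary} --- and the elementary count of dyadic intervals straddling $\partial\Gamma$.
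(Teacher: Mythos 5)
Your proposal is correct and follows essentially the same route as the paper: project $\tilde{Z}_x$, invoke the effective Tarski--Seidenberg theorem and Lemma~\ref{lem:finite-boundary} to bound its complexity and boundary, then combine with Lemma~\ref{lem:many-strips} and a counting argument to extract a full dyadic interval. The only cosmetic difference is that you count dyadic intervals straddling boundary points while the paper counts interval components of $\pi_1\tilde{Z}_x$; both yield the same choice $C_{k,1}\gtrsim_k 1$.
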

\begin{proof}[Proof of Lemma \ref{lem:large-projection}]
Using an effective version of quantifier-elimination in the theory of real closed fields
(that was mentioned in Subsection 2.1; see e.g. \cite[Theorem 14.16]{MR2248869} for a reference)
$\pi_{1} \tilde{Z}_{x}$ is semialgebraic set having bounded complexity depending only on $k$.
By Lemma~\ref{lem:finite-boundary}, it is the union of at most $\tilde{C}_{k}$ intervals, where $\tilde{C}_{k}$ depends only on $k$.
If $C_{k,1} > 4\tilde{C}_{k}$, then it follows from Lemma~\ref{lem:many-strips} that $\pi_{1}\tilde{Z}_{x}$ contains at least one full dyadic interval of length $1/K$.
\end{proof}

\begin{rem}
From this point on, all estimates are allowed to depend implicitly on both $k$ and $K$; these implicit constants will accumulate to the constant $C_{k, K}$ in \eqref{eq:Ldelta-recursion}.
To simplify notation, the dependence on $K$ and $k$ will often be dropped.
\end{rem}

By Lemma~\ref{lem:large-projection} and a standard pigeonholing argument, we can find a subset $\widetilde{L}_{\delta, \mathrm{sprd}}\subset L_{\delta, \mathrm{sprd}}$ and an interval $I \subset [0,1]$ of length $1/K$ such that
\[
|\widetilde{L}_{\delta, \mathrm{sprd}}| \geq |L_{\delta, \mathrm{sprd}}|/K
\]
and for every $x\in \widetilde{L}_{\delta, \mathrm{sprd}}$ we have $I \subseteq \pi_{1} \tilde{Z}_{x}$.
Let
\begin{equation}
t_0, t_1, \dots, t_k\in I
\end{equation}
be a $1/(kK)$-separated set.

Let $\Part{\delta}$ be the partition of $[0, 1]^2$ into dyadic squares $\Delta$ of side length $\delta$. Next, for each $x\in L_{\delta, \mathrm{sprd}}$, we will construct a sub-collection $\mc{P}_x(\delta)\subset \mc{P}(\delta)$.

Roughly speaking, the purpose of this construction is that we want $Z_x$ to be approximated by a union of certain lattice dyadic $\delta$-squares. To this end, we choose sufficiently many $\Delta \in \mc{P}_x(\delta)$ such that the union of all $\Delta \in \mc{P}_x(\delta)$ approximates $Z_x$ well. Moreover when $\Delta \in \mc{P}_x(\delta)$, we want to have $\abs{ P(\xi; x)- \mu_x} \lesssim 1$ for \emph{every} $\xi \in \Delta$ so that $\bigcup_{\Delta \in \mc{P}_x(\delta)} \Delta$ is in a slightly thickened stationary set. This is guaranteed by the following Lemma whose proof uses the Lagrange interpolation.
\begin{lem}\label{lem:lagrange-interpolation}
Let $\calP_{x}(\delta) := \Set{\Delta \in \Part{\delta} \given \Delta \cap \tilde{Z}_{x} \neq \emptyset}$.
Then, for each $\Delta \in \calP_{x}(\delta)$ and $\xi\in\Delta$, we have
\begin{equation}
\label{eq:P-almost-const-near-Z}
\abs{ P(\xi; x)- \mu_x} \lesssim_{k, K} 1.
\end{equation}
\end{lem}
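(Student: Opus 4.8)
The plan is to use Lagrange interpolation in the first variable along the points $t_0, \dots, t_k \in I$, exploiting the fact that $P(\,\cdot\,; x)$ has degree at most $k$ in $\xi_1$. Fix $x \in L_{\delta,\mathrm{sprd}}$ and $\Delta \in \calP_x(\delta)$, and fix $\xi = (\xi_1,\xi_2) \in \Delta$. The key observation is that for each $j \in \{0,\dots,k\}$, the point $(t_j,\xi_2)$ lies near $\tilde{Z}_x$: indeed, because $I \subseteq \pi_1 \tilde{Z}_x$ there is some point of $\tilde{Z}_x$ over $t_j$, and because $\Delta \cap \tilde{Z}_x \neq \emptyset$ and $\tilde{Z}_x$ is an intersection of the shifted copies $\gammax - c_k\delta a$ over $a \in \{0,\dots,k\}^2$, the vertical shifts built into $\tilde{Z}_x$ let us move within an $O(k\delta)$-neighborhood in the $\xi_2$-direction and stay inside $\gammax$. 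More precisely, I would argue that $(t_j, \xi_2) $ lies within $O_{k}(\delta)$ of a point of $\tilde{Z}_x$, hence (by another shift) within $O_k(\delta) = O_{k,K}(\delta)$ of $Z_x$; since $Z_x \subseteq \{\mu_x \le P(\cdot;x) \le \mu_x+1\}$ and $P(\cdot;x)$ is a polynomial of degree $\le k$ with coefficients controlled by $x \in L_{\delta}$ (so on $[0,1]^2$ its gradient is bounded in terms of... — here one must be a bit careful, see below), we get $\abs{P(t_j,\xi_2;x) - \mu_x} \lesssim_{k,K} 1$ for every $j$.

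Granting that, the main step is the interpolation itself. Write $P(\,\cdot\,; x)$ restricted to the horizontal line $\{\xi_2 = \text{const}\}$ as a univariate polynomial $p(\xi_1)$ of degree $\le k$. By the Lagrange interpolation formula through the nodes $t_0,\dots,t_k$,
\begin{equation}
p(\xi_1) - \mu_x = \sum_{j=0}^{k} \bigl(p(t_j) - \mu_x\bigr) \prod_{i \neq j} \frac{\xi_1 - t_i}{t_j - t_i},
\end{equation}
using that the constant $\mu_x$ is itself a polynomial of degree $0 \le k$ so the interpolation identity applies to $p - \mu_x$. Each factor $\abs{\xi_1 - t_i} \le 1$ since everything lives in $[0,1]$, and each $\abs{t_j - t_i} \ge 1/(kK)$ by the separation of the nodes, so the Lagrange basis polynomials are bounded by $(kK)^k \lesssim_{k,K} 1$ on $[0,1]$. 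Combining with $\abs{p(t_j) - \mu_x} = \abs{P(t_j,\xi_2;x) - \mu_x} \lesssim_{k,K} 1$ from the previous paragraph yields $\abs{P(\xi_1,\xi_2;x) - \mu_x} \lesssim_{k,K} 1$, which is exactly \eqref{eq:P-almost-const-near-Z}.

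The step I expect to be the main obstacle is making rigorous the claim that $(t_j,\xi_2)$ is within $O_{k,K}(\delta)$ of $Z_x$, and in particular that $P(\cdot;x)$ does not vary by more than $O_{k,K}(1)$ over such a $\delta$-neighborhood. One cannot invoke a crude Lipschitz bound on $P(\cdot;x)$ over all of $[0,1]^2$, since the coefficients of $P(\cdot;x)$ (i.e., $x$) are unbounded on $L_\delta$; rather one must use the structure of $\tilde{Z}_x$. The point of defining $\tilde{Z}_x = \bigcap_{a \in \{0,\dots,k\}^2}(\gammax - c_k\delta a)$ is precisely this: if $\eta \in \tilde{Z}_x$ then $\eta + c_k\delta a \in \gammax$ for all $a \in \{0,\dots,k\}^2$, so the $(k+1)^2$ shifted lattice points $\eta + c_k \delta a$ all satisfy $\mu_x \le P(\cdot;x) \le \mu_x + 1$. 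Applying the Lagrange interpolation argument \emph{in both variables} to these $(k+1)^2$ nodes (a two-dimensional interpolation on the grid $\{\eta + c_k\delta a\}$) shows that $P(\cdot;x) - \mu_x$ is $O_{k}(1)$ on the whole square spanned by this grid, which has side length $\sim k\delta$; in particular $P(\cdot;x)$ varies by $O_k(1)$ over any $\delta$-ball meeting $\tilde{Z}_x$. This is really the heart of the lemma, and the one-variable interpolation through the $t_j$'s is then a second, similar application. I would organize the write-up so that a single "interpolation over a grid of controlled spacing and diameter" estimate is proved once and then applied twice: once to transfer control from $\gammax$ to a neighborhood of $\tilde{Z}_x$, and once along the horizontal direction through $t_0,\dots,t_k$ to reach an arbitrary $\xi \in \Delta$. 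Finally one should double-check the bookkeeping: the node separation $1/(kK)$ and the diameter $\le 1$ of $[0,1]$ feed into constants of the form $(kK)^{O(k)}$, all of which are permitted since the conclusion allows dependence on both $k$ and $K$.
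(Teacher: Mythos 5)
Your final paragraph contains the paper's actual proof, and it is complete on its own: take $\xi_0\in\Delta\cap\tilde{Z}_x$; by the definition \eqref{210121e4_2} the $(k+1)^2$ grid points $\xi_0+c_k\delta a$, $a\in\{0,\dots,k\}^2$, all lie in $Z_x$, so $P(\cdot\,;x)-\mu_x\in[0,1]$ at each of them; since $P(\cdot\,;x)$ has degree $\le k$ in each variable, two-variable Lagrange interpolation on this grid (spacing $c_k\delta$ with $c_k=c_k(k)$) bounds the Lagrange basis polynomials by $O_k(1)$ on any $O(\delta)$-neighborhood of $\xi_0$, hence $\abs{P(\xi;x)-\mu_x}\lesssim_k 1$ for all $\xi\in\Delta$. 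That is exactly the argument in the paper, and once you have it, the conclusion holds for \emph{every} $\xi\in\Delta$ directly — no second interpolation is needed.

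The first two paragraphs, which you frame as the main argument, rest on a claim that is not justified and is false in general: that $(t_j,\xi_2)$ lies within $O_k(\delta)$ of $\tilde{Z}_x$ for the particular $\xi_2$ coming from your chosen $\xi\in\Delta$. The inclusion $I\subseteq\pi_1\tilde{Z}_x$ only produces, for each $t_j$, \emph{some} point $(t_j,s_j)\in\tilde{Z}_x$ with $s_j$ completely uncontrolled in $[0,1]$; the shifts built into $\tilde{Z}_x$ only move points by $O(kc_k\delta)$ and cannot bridge the gap between $s_j$ and $\xi_2$. (The set $\tilde Z_x$ can perfectly well look like a thin curved strip whose fibre over $t_j$ is far from the fibre over $\pi_1\Delta$.) So the one-variable interpolation through $t_0,\dots,t_k$ should be dropped from this lemma entirely; those nodes, and the fact that $I\subseteq\pi_1\tilde{Z}_x$, are used only later, in the selection of the tuples $\bdelta$ and the dual-box counting of Subsection 4.4. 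If you reorganize so that the grid-interpolation estimate of your last paragraph \emph{is} the proof, the write-up is correct and matches the paper.
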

\begin{proof}[Proof of Lemma \ref{lem:lagrange-interpolation}]
In the proof of this lemma we will see the motivation of introducing \eqref{210121e4_2}. We take one $\Delta\in \calP_x(\delta)$. By definition, we can find $\xi_0\in \Delta$ such that $\xi_0\in \tilde{Z}_x$. By the definition of $\tilde{Z}_x$ as in \eqref{210121e4_2}, we know that
\begin{equation}
\xi_0+c_k \delta a\in Z_x \text{ for every } a\in \{0, 1, \dots, k\}^2.
\end{equation}
As a consequence, we obtain that
\begin{equation}
\mu_x\le P(\xi_0+c_k \delta a; x)\le \mu_x+1.
\end{equation}
The desired bound \eqref{eq:P-almost-const-near-Z} follows from applying the interpolation polynomials in the Lagrange form.
\end{proof}

In particular,
\begin{equation}
t_{k'}\in \pi_1 \Delta \text{ for some } \Delta\in \mc{P}_{x}(\delta),
\end{equation}
for every $x\in \widetilde{L}_{\delta, \mathrm{sprd}}$ and every $0\le k'\le k$.

\subsection{Rigidity of stationary sets}\label{201118subsection7_3}

For $\Xi\subset \R^N$, let $\con(\Xi)$ be the closure of the convex hull of $\Xi$. By John's ellipsoid lemma, we know that there exists a rectangular box $\Box_{\Xi}$ such that
\begin{equation}
\Box_{\Xi}\subset \con(\Xi)\subset C_N \Box_{\Xi},
\end{equation}
where $C_N$ is a large constant that depends only on $N$. Let $c(\Xi)$ denote the center of $\Box_{\Xi}$.  Let $\dual(\Xi)$ denote
\begin{equation}
\{x\in \R^N: |\langle \bxi-c(\Xi), x\rangle|\le 2C_{k, K} \text{ for every } \bxi\in \Box_{\Xi}\},
\end{equation}
where $C_{k, K}$ is the same as the one in Lemma \ref{lem:lagrange-interpolation}. Notice that $|\Box_{\Xi}||\dual(\Xi)|\sim 1$. \\

Recall from \eqref{eq:P-almost-const-near-Z} that, for every $\xi\in \Delta\in \mc{P}_x(\delta)$, we have
\begin{equation}
\mu_x-C_{k, K}\le P(\xi; x)=x\cdot \phi_{ k}(\xi)\le \mu_x+C_{k, K}.
\end{equation}
Denote
\begin{equation}
\Xi_x:=\{\phi_{ k}(\xi): \xi\in \Delta\in \mc{P}_x(\delta)\}.
\end{equation}
We obtain that
\begin{equation}
\mu_x-C_{k, K}\le x\cdot \bxi\le \mu_x+C_{k, K},
\end{equation}
for every $\bxi\in \con(\Xi_{x})$, and therefore
\begin{equation}
|x\cdot (\bxi-c(\Xi_{x}))|\le 2C_{k, K},
\end{equation}
for every $\bxi\in \con(\Xi_{x})$, which further means $x\in \dual(\Xi_{x})$.

\subsection{Estimating dual boxes} In this section, we will bound $\widetilde{L}_{\delta, \mathrm{sprd}}$. Recall the choice of $t_0, t_1, \dots, t_k$ from  previous subsections. Let $\mc{P}_{k'}(\delta)$ denote
\begin{equation}
\{\Delta \in \mc{P}(\delta) : (t_{k'}, s)\in \Delta \text{ for some } s\in [0, 1]\}.
\end{equation}
For a fixed tuple
\begin{equation}
\bdelta=(\Delta_0, \dots, \Delta_{k}) \in \mc{P}_0(\delta)\times \dots \times \mc{P}_{k}(\delta),
\end{equation}
denote
\begin{equation}
\widetilde{L}_{\delta, \mathrm{sprd}}(\bdelta):=\{x\in \widetilde{L}_{\delta, \mathrm{sprd}}: \Delta_{k'}\in \mc{P}_x(\delta) \text{ for every } k'\}.
\end{equation}
Therefore we can write
\begin{equation}\label{201121e7_21}
\widetilde{L}_{\delta, \mathrm{sprd}}=\bigcup_{\bdelta \in \mc{P}_0(\delta)\times \dots \times \mc{P}_{k}(\delta)} \widetilde{L}_{\delta, \mathrm{sprd}}(\bdelta).
\end{equation}

Let us focus on estimating $\widetilde{L}_{\delta, \mathrm{sprd}}(\bdelta)$. Recall the discussion in Subsection \ref{201118subsection7_3}. Let us use $\phi_k(\bdelta)$ to denote $\{\phi_{k}(\xi): \xi\in \Delta_{k'} \text{ for some } k'\}$. We have
\begin{equation}
\label{eq:3}
|\widetilde{L}_{\delta, \mathrm{sprd}}(\bdelta)|
\le |\dual(\phi_k(\bdelta))|
\sim \abs{\con(\phi_k(\bdelta))}^{-1}
\end{equation}
We will get a good lower bound on $\abs{\con(\phi_k(\bdelta))}$ by choosing a transverse set of line segments inside this convex set.
A standard geometric observation is that each set $\con(\phi_{k}(\Delta))$ contains line segments of length $\sim \delta^{-l}$ in the direction of $l$-th derivatives of $\phi_{k}$:
\begin{lem}
\label{lem:tangent-lines}
For every $\Delta \in \Part{\delta}$, every $\xi\in\Delta$, and every multiindex $\beta$ with $1 \leq \abs{\beta} \leq k$, the convex hull $\con(\phi_{k}(\Delta))$ contains a line segment of length $\sim\delta^{\abs{\beta}}$ in the direction $\partial^{\beta}\phi_{k}(\xi)$.
\end{lem}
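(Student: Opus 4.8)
The plan is to prove the statement first at a corner of $\Delta$, where the relevant direction is a coordinate axis, and then to transport it to an arbitrary base point by an affine change of variables. Consider the model case $\Delta=[0,1]^{2}$, $\xi=0$: here $\partial^{\beta}\phi_{k}(0)=\beta!\,e_{\beta}$, the $\beta$-th standard basis vector of $\R^{N}$, since $\partial^{\beta}(u^{\gamma})\big|_{u=0}$ equals $\beta!$ if $\gamma=\beta$ and $0$ otherwise. The key point is that the space $\mathrm{Poly}_{\le k}(\R^{2})$ of polynomials of degree $\le k$ has dimension $N+1$, and for a generic choice of $N+1$ points $q_{0},\dots,q_{N}\in[0,1]^{2}$ the Vandermonde-type matrix $\big(q_{i}^{\gamma}\big)_{0\le i\le N,\ 0\le\abs{\gamma}\le k}$ is invertible (the exceptional configurations all lie on a degree-$\le k$ curve); hence there is a finitely supported signed measure $\nu=\sum_{i}\lambda_{i}\delta_{q_{i}}$ on $[0,1]^{2}$ with $\int p\,d\nu=(\partial^{\beta}p)(0)$ for every $p\in\mathrm{Poly}_{\le k}$. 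Taking $p\equiv 1$ gives $\sum_{i}\lambda_{i}=0$, so the positive and negative parts of $\nu$ have a common total mass $m>0$; setting $P_{+}:=m^{-1}\sum_{\lambda_{i}>0}\lambda_{i}\phi_{k}(q_{i})$ and $P_{-}:=m^{-1}\sum_{\lambda_{i}<0}(-\lambda_{i})\phi_{k}(q_{i})$, these are convex combinations of points of $\phi_{k}([0,1]^{2})$, hence lie in $\con(\phi_{k}([0,1]^{2}))$, and applying $\nu$ to $\phi_{k}$ coordinatewise gives $P_{+}-P_{-}=m^{-1}\partial^{\beta}\phi_{k}(0)=(\beta!/m)\,e_{\beta}$. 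Thus $[P_{-},P_{+}]$ is a segment in the convex hull of length $\sim_{k}1$ in the direction $e_{\beta}$.

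Next I would treat a general dyadic square $\Delta=\xi_{0}+[0,\delta]^{2}$, with base point its lower-left corner $\xi_{0}$. By the binomial theorem, $\big(\phi_{k}(\xi_{0}+\delta u),1\big)=A_{\xi_{0}}\tilde D_{\delta}\big(\phi_{k}(u),1\big)$, where $\tilde D_{\delta}$ is diagonal with $\gamma$-entry $\delta^{\abs{\gamma}}$, and $A_{\xi_{0}}$ is unipotent and lower-triangular for the componentwise partial order on multi-indices, with $(\gamma,\gamma')$-entry $\binom{\gamma}{\gamma'}\xi_{0}^{\gamma-\gamma'}$. Since affine maps carry convex hulls to convex hulls and segments to segments, applying the affine map $v\mapsto A_{\xi_{0}}\tilde D_{\delta}(v,1)$ (followed by dropping the appended constant coordinate) to the model segment from the previous paragraph sends the direction $e_{\beta}$ to $\delta^{\abs{\beta}}A_{\xi_{0}}e_{\beta}=\delta^{\abs{\beta}}\big(\binom{\gamma}{\beta}\xi_{0}^{\gamma-\beta}\big)_{\gamma}=(\delta^{\abs{\beta}}/\beta!)\,\partial^{\beta}\phi_{k}(\xi_{0})$. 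Using the uniform bounds $\beta!\le\abs{\partial^{\beta}\phi_{k}(\xi_{0})}\lesssim_{k}1$ on $[0,1]^{2}$, this yields a segment inside $\con(\phi_{k}(\Delta))$ of length $\sim_{k}\delta^{\abs{\beta}}$ in the direction $\partial^{\beta}\phi_{k}(\xi_{0})$. Finally, for an arbitrary $\xi\in\Delta$ rather than the corner, $\partial^{\beta}\phi_{k}$ is Lipschitz on $[0,1]^{2}$ with constant $\lesssim_{k}1$ while $\abs{\partial^{\beta}\phi_{k}}\ge\beta!\ge 1$ there; since $\abs{\xi-\xi_{0}}\le\sqrt 2\,\delta$, the vectors $\partial^{\beta}\phi_{k}(\xi)$ and $\partial^{\beta}\phi_{k}(\xi_{0})$ are parallel up to an $O_{k}(\delta)$ rotation, so the segment constructed above is one "in the direction $\partial^{\beta}\phi_{k}(\xi)$" up to a perturbation that does not affect the subsequent lower bound for $\abs{\con(\phi_{k}(\bdelta))}$.

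The step I expect to require the most care is the affine reduction: checking that the $\beta$-column of the binomial change-of-basis matrix $A_{\xi_{0}}$ is exactly a scalar multiple of $\partial^{\beta}\phi_{k}(\xi_{0})$ — matching $\binom{\gamma}{\beta}\xi_{0}^{\gamma-\beta}$ against $\tfrac{\gamma!}{(\gamma-\beta)!}\xi_{0}^{\gamma-\beta}$, and noting that both vanish unless $\gamma\ge\beta$ componentwise — together with the bookkeeping of the extra constant coordinate. The remaining ingredients (invertibility of a generic Vandermonde matrix, the $\pm$-splitting into convex combinations, and the Lipschitz estimate) are routine.
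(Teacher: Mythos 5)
The paper offers no proof of this lemma at all --- it is introduced as ``a standard geometric observation'' and left to the reader --- so there is nothing to compare your argument against; you are supplying the missing proof. Your mechanism is sound. The model case is correct: since the evaluation functionals at $N+1$ generic points of the unit square form a basis of the dual of $\mathrm{Poly}_{\le k}(\R^2)$, the functional $p\mapsto\partial^\beta p(0)$ is represented by a signed combination $\sum_i\lambda_i\delta_{q_i}$ with $\sum_i\lambda_i=0$ (as $|\beta|\ge1$) and $\int\xi^\beta d\nu=\beta!\neq0$, so the $\pm$-splitting produces two points of the convex hull whose difference is $(\beta!/m)e_\beta$ with $0<m\lesssim_k 1$. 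The affine transport is also correct: the $\beta$-column of $A_{\xi_0}$ is $\bigl(\binom{\gamma}{\beta}\xi_0^{\gamma-\beta}\bigr)_\gamma=\frac{1}{\beta!}\partial^\beta\phi_k(\xi_0)$, the constant coordinate of that column vanishes, and $1\le\beta!\le|\partial^\beta\phi_k|\lesssim_k1$ on $[0,1]^2$ gives the length $\sim_k\delta^{|\beta|}$.

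The one point that does not match the statement is the final step. The lemma asserts a segment in the direction $\partial^\beta\phi_k(\xi)$ for \emph{every} $\xi\in\Delta$, whereas your construction produces it only for the corner $\xi_0$, and for general $\xi$ you retreat to ``the directions agree up to $O_k(\delta)$, which does not hurt the application.'' That last clause is a claim about the determinant estimates in the next subsection, not a proof of the lemma as stated, and it is not obviously harmless there: the determinants $\det\bar A_l$ being controlled can be as small as $\sim\delta$, which is exactly the size of the perturbation you are waving away. Fortunately the exact statement follows from your own framework with no extra machinery: keep the same interpolation points $q_i$ but represent the functional $p\mapsto\partial^\beta p(u_*)$ at the rescaled position $u_*=\delta^{-1}(\xi-\xi_0)\in[0,1]^2$ instead of at $0$. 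The coefficients $\lambda_i(u_*)$ are obtained by applying the inverse of the same fixed Vandermonde matrix to the bounded vector $\bigl(\partial^\beta u^\gamma|_{u=u_*}\bigr)_\gamma$, hence are uniformly bounded in $u_*$, one still has $\sum_i\lambda_i(u_*)=0$ and $\int\xi^\beta d\nu_{u_*}=\beta!$, and the transported difference vector is then exactly $m^{-1}\delta^{|\beta|}\partial^\beta\phi_k(\xi_0+\delta u_*)=m^{-1}\delta^{|\beta|}\partial^\beta\phi_k(\xi)$. With that substitution your proof establishes the lemma precisely as stated.
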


From now on, we order the monomials in $\phi$ in the increasing lexicographic order and write
\begin{equation}
\phi_{k}(\xi)=(t, t^2, \dots, t^k, s, st, \dots, st^{k-1}, s^2, \dots),
\quad \xi = (s,t).
\end{equation}
We also order the $t_{j}$'s in a convenient way.
By the Vandermonde determinant formula, we know that
\[
\det \begin{pmatrix}
1 & t_0, & t_0^2, & \dots & t_0^{l}\\
1 & t_1, & t_1^2, & \dots & t_1^{l}\\
\vdots & \vdots & \vdots & \ddots & \vdots\\
1 & t_{l}, & t_{l}^2, & \dots & t_{l}^{l}
\end{pmatrix}
\sim 1.
\]
Let $v_{l} \in \R^{l+1}$ be the vector whose Hodge dual is
\[
\star v_{l} = \begin{pmatrix}
1\\
1\\
\vdots\\
1
\end{pmatrix}
\wedge
\begin{pmatrix}
t_0\\
t_1\\
\vdots\\
t_{l}
\end{pmatrix}
\wedge \dotsb \wedge
\begin{pmatrix}
t_0^{l-1}\\
t_1^{l-1}\\
\vdots\\
t_{l}^{l-1}
\end{pmatrix}.
\]
It follows that $\norm{v_{l}} \sim 1$.
By rearranging $t_{0},\dotsc,t_{l}$ if necessary, we may assume that the last entry of $v_{l}$ is $\sim 1$.
Doing this in the reverse order of $l=k,\dotsc,1$, we may assume that this holds for every $l$.

With this order of $t_{j}$'s, we will use the following lower bound:
\begin{align*}
& \abs{\con(\phi_k(\bdelta))}
\\ & \geq
\abs[\Big]{ \bigwedge_{j=1}^{k} (\phi(\xi_{j})-\phi(\xi_{0}))
  \wedge \bigwedge_{j=0}^{k} \delta \partial_{s}\phi(\xi_{j})
  \wedge \bigwedge_{k'=2}^{k-1} \bigwedge_{j=1}^{k+1-k'} \delta^{k'} (\partial_{s}^{k'} \phi(\xi_{j}) - \partial_{s}^{k'} \phi(\xi_{0})) }
\\ &=
\delta^{k(k+1)(k+2)/6-k+1}
\abs[\Big]{\bigwedge_{j=1}^{k} (\phi(\xi_{j})-\phi(\xi_{0}))
  \wedge \bigwedge_{j=0}^{k} \partial_{s}\phi(\xi_{j})
  \wedge \bigwedge_{k'=2}^{k-1} \bigwedge_{j=1}^{k+1-k'} (\partial_{s}^{k'} \phi(\xi_{j}) - \partial_{s}^{k'} \phi(\xi_{0}))}.
\end{align*}
Here, we have chosen a total of
\[
k + (k+1) + (k-1) + \dotsb + 2
= k + k(k+1)/2 = N
\]
vectors in the difference set of the convex set $\con(\phi_{k}(\bdelta))$, of which the first $k$ come from differences between $\Delta_{0}$ and $\Delta_{1},\dotsc,\Delta_{k}$, while the remaining ones come from individual $\Delta_{j}$'s via Lemma~\ref{lem:tangent-lines}.

Writing these vectors as rows of a matrix, we see that, up to a $(k, l)$-dependent constant, the latter wedge product is the determinant of the block upper triangular matrix
\[
\begin{pmatrix}
A_{k} & * & \dots & \dots & * \\
0 & \tilde{A}_{k-1} & * & &\vdots \\
0 & 0 & \bar{A}_{k-2} & \ddots & \vdots \\
\vdots & & \ddots & \ddots &  * \\
0 & \dots & \dots & 0 & \bar{A}_1
\end{pmatrix}
\]
with
\[
A_{k}=\begin{bmatrix}
t_1-t_{0}, & t_1^2-t_{0}^{2}, & \dots & t_1^{k}-t_{0}^{k}\\
t_2-t_{0}, & t_2^2-t_{0}^{2}, & \dots & t_2^{k}-t_{0}^{k}\\
\vdots & \vdots & \ddots & \vdots\\
t_k-t_{0}, & t_k^2-t_{0}^{2}, & \dots & t_k^{k}-t_{0}^{k}
\end{bmatrix}
\]
\[
\tilde{A}_{l}:=\begin{bmatrix}
1 & t_0, & t_0^2, & \dots & t_0^{l} & s_{0}\\
1 & t_1, & t_1^2, & \dots & t_1^{l} & s_{1}\\
\vdots & \vdots & \vdots & \ddots & \vdots & \vdots\\
1 & t_{l+1}, & t_{l+1}^2, & \dots & t_{l+1}^{l} & s_{l+1}
\end{bmatrix}
\]
\[
\bar{A}_{l}:=
\begin{bmatrix}
t_1-t_{0}, & \dots & t_1^{l}-t_{0}^{l} & (s_{1}-s_{0})\\
\vdots &  \ddots & \vdots & \vdots\\
t_{l+1}-t_{0}, & \dots & t_{l+1}^{l}-t_{0}^{l} & (s_{l+1}-s_{0})
\end{bmatrix}
\]
Since $A_{k}$ is a Vandermonde matrix, its determinant is $\sim 1$.
Consider next
\begin{equation}
\label{eq:2}
\det \bar{A}_{l} = \det \tilde{A}_{l} = v_{l+1} \cdot (s_{0},\dotsc,s_{l+1}).
\end{equation}
We fix some $s_{0},s_{1}$ with
\begin{equation}
\label{eq:1}
(t_{i},s_{i}) \in \Delta_{i}
\end{equation}
and select $s_{l+1}$ for $l=1,\dotsc,k-1$ recursively in such a way that \eqref{eq:1} holds and the absolute value of \eqref{eq:2} is maximized.
By our arrangement of $t_{j}$'s, the last entry of $v_{l+1}$ is $\sim 1$.
Therefore, given $s_{0},\dotsc,s_{l}$, the set of $s_{l+1}$'s for which $\abs{\det\bar{A}_{l}} \sim \delta_{l}$ is the union of at most two intervals of length $\sim \delta_{l}$.
It follows that, given $\Delta_{0},\dotsc,\Delta_{l}$ and $\delta_{l} \in [\delta,1]$, there are $\sim \delta_{l}/\delta$ many $\Delta_{l+1}$ for which $\abs{\det \bar{A}_{l}} \sim \delta_{l}$, and there are no other possibilities.

Therefore,
\begin{align*}
& \sum_{\bdelta\in \mc{P}_0(\delta)\times \dots \times \mc{P}_{k}(\delta)} \abs{\con(\phi_k(\bdelta))}^{-1}
\\ &\leq
\sum_{\substack{\delta_1, \dots, \delta_{k-1} \in [\delta,1]\\ \text{dyadic}}} \sum_{\Delta_{0},\Delta_{1}} \sum_{\substack{\Delta_{2} \\ \abs{\det \bar{A}_{2}}\sim \delta_{1}}} \dots \sum_{\substack{\Delta_{k} \\ \abs{\det \bar{A}_{k}}\sim \delta_{k-1}}} \abs{\con(\phi_k(\bdelta))}^{-1}
\\ &\lesssim
(\delta^{k(k+1)(k+2)/6-k+1})^{-1} \sum_{\substack{\delta_1, \dots, \delta_{k-1} \in [\delta,1]\\ \text{dyadic}}} \sum_{\Delta_{0},\Delta_{1}} \sum_{\substack{\Delta_{2} \\ \abs{\det \bar{A}_{1}}\sim \delta_{1}}} \dots \sum_{\substack{\Delta_{k} \\ \abs{\det \bar{A}_{k-1}}\sim \delta_{k-1}}} \bigl( \det A_{k} \prod_{j=1}^{k-1} \det \bar{A}_{j}\bigr)^{-1}
\\ &\lesssim
(\delta^{k(k+1)(k+2)/6-k+1})^{-1} \sum_{\substack{\delta_1, \dots, \delta_{k-1} \in [\delta,1]\\ \text{dyadic}}} \sum_{\Delta_{0},\Delta_{1}} \sum_{\substack{\Delta_{2} \\ \abs{\det \bar{A}_{1}}\sim \delta_{1}}} \dots \sum_{\substack{\Delta_{k} \\ \abs{\det \bar{A}_{k-1}}\sim \delta_{k-1}}} \bigl( \prod_{j=1}^{k-1} \delta_{j} \bigr)^{-1}
\\ &\lesssim
(\delta^{k(k+1)(k+2)/6-k+1})^{-1} \sum_{\substack{\delta_1, \dots, \delta_{k-1} \in [\delta,1]\\ \text{dyadic}}} \delta^{-k-1}
\\ &\lesssim
\abs{\log_{+} \delta}^{k-1}
\delta^{-k(k+1)(k+2)/6-2}.
\end{align*}
This finishes the estimate for \eqref{201121e7_21} and the proof of Theorem~\ref{main_tarry}.

\begin{rem}\label{rem:higher-dim-tarry}
If we follow the proof of Theorem~\ref{main_tarry} in higher dimensions $d\geq 3$, the problem of estimating the spread out set becomes much more difficult.
In dimension $d=2$, we greatly benefited from a factorization of the upper bound \eqref{eq:3} into linear factors given by \eqref{eq:2}.
Unless a similar upper bound with good factorization can be found in higher dimensions, we would face complicated quantitative real algebraic problems.
We feel it may be difficult to find such an upper bound at this moment, and hence do not pursue the problem for $S_{d, k}$, $d\geq 3$, in the current paper.
\end{rem}

\section{Lower bounds}\label{210216section5}

In this section, we prove lower bounds for $p_{2, k}$. The idea is not difficult to explain. We will show that the function $E_k 1$ is ``large" (see Lemma \ref{210104lem6_2}) on a disjoint union of rectangular boxes (see Lemma \ref{210104lem6_1}). This idea has already been used by  Arkhipov, Chubarikov and Karatsuba \cite{MR552548, MR2113479} and by Ikromov \cite{MR1636721}. The lower bound in Lemma \ref{210104lem6_2} is essentially the same as in the above mentioned papers; the key difference is that we are able to find more rectangular boxes where such a lower bounds holds. Roughly speaking, this is achieved via applying the ``rotation symmetry" of Parsell-Vinogradov surfaces. \\

Let $\lambda>0$ be a large number. Let $\theta_r=r/(100\lambda)$, $r=0, 1, \dots, \lambda$. Let $w_r=1/4+r/(2\lambda)$, $r=0, 1, \dots, \lambda$. The choice of $\theta_r$ and $w_r$ is such that
\begin{equation}
|\theta_r|\le \frac{1}{100}, \ \ \frac{1}{4}\le w_r\le \frac{3}{4},
\end{equation}
and what we will need is that $\theta_r$ is close to 0, and $w_r$ is away from both 0 and 1.

Recall that $x\in \R^N$. Let us write
\begin{equation}\label{210121e5_2}
P(\xi; x)=x_1 \xi_1^k+ \cdots.
\end{equation}
For given $\theta_{r}$ and $w_{r'}$, we let $\bfr=(r, r')$ and write
\begin{equation}\label{210206e5_3}
P(\xi; x)=\sum_{0\le k'\le k} \gamma_{\bfr, k'}(\xi_2; x)(\xi_1+\theta_r \xi_2-w_{r'})^{k'},
\end{equation}
where for each $\bfr$, $k'$ and $x\in \R^N$, the function $\gamma_{\bfr, k'}(\xi_2; x)$ is a polynomial of degree $k-k'$ in $\xi_2$. In particular, $\gamma_{\bfr, k}(\xi_2; x)$ is a constant function that is independent of $\bfr$ and $\xi_2$; indeed, $\gamma_{\bfr, k}(\xi_2; x)=x_1$, which follows from the normalization in \eqref{210121e5_2}.  Define
\begin{multline}\label{210206e5_4}
\Omega_{\bfr}:=\{x\in \R^N: \lambda^k\le x_1\le (2\lambda)^k, \|\gamma_{\bfr, k'}(\cdot; x)\|\le \epsilon \lambda^{k'}, k'=1, 2, \dots, k-1,\\
\|\gamma_{\bfr, 0}(\cdot; x)-\gamma_{\bfr, 0}(0; x)\|\le\epsilon \}.
\end{multline}
Here $\epsilon>0$ is a sufficiently small parameter that will be picked later, and for a polynomial $\gamma: [0, 1]\to \R$, we use $\|\gamma\|$ to denote the $\ell^1$ sum of all its coefficients.
\begin{lem}\label{210104lem6_1}
If $\epsilon$ (depending only on $k$) is chosen to be sufficiently small, then for each $\bfr$, it holds that
\begin{equation}\label{210104e6_5}
|\Omega_{\bfr}|\gtrsim_k \lambda^{k(k+1)(k+2)/6},
\end{equation}
and for $\bfr_1\neq \bfr_2$, we have $\Omega_{\bfr_1}\cap \Omega_{\bfr_2}=\emptyset$.
\end{lem}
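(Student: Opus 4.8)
The plan is to establish the two claims in Lemma~\ref{210104lem6_1} separately, starting with the volume bound \eqref{210104e6_5} and then the disjointness.

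\textbf{Volume bound.} First I would observe that for fixed $\bfr$, the map $x \mapsto (x_1, (\gamma_{\bfr,k'}(\cdot;x))_{k'=0,\dots,k-1})$ is a linear change of coordinates on $\R^N$. Indeed, expanding \eqref{210206e5_3} and comparing coefficients of the monomials $\xi_1^{a}\xi_2^{b}$ expresses the coefficients of $P(\cdot;x)$ as invertible linear combinations of the coefficients of the polynomials $\gamma_{\bfr,k'}(\xi_2;x)$, because the substitution $\xi_1 \mapsto \xi_1 + \theta_r \xi_2 - w_{r'}$ is an invertible affine map of the plane, hence induces an invertible (unipotent-type, upper-triangular in an appropriate monomial order) linear map on the space of polynomials of degree $\le k$. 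The Jacobian of this change of variables is therefore a nonzero constant depending only on $\theta_r, w_{r'}$, and since these parameters lie in fixed compact ranges ($|\theta_r|\le 1/100$, $1/4\le w_{r'}\le 3/4$) the Jacobian is bounded above and below by constants depending only on $k$. Under these coordinates $\Omega_{\bfr}$ becomes a box: $x_1$ ranges over an interval of length $\sim \lambda^k$, each coefficient of $\gamma_{\bfr,k'}$ (there are $k-k'$ of them, for $k' = 1,\dots,k-1$) ranges over an interval of length $\sim \epsilon\lambda^{k'}$, and the $k+1$ coefficients of $\gamma_{\bfr,0} - \gamma_{\bfr,0}(0;\cdot)$ that are not the constant term (i.e. the coefficients of $\xi_2, \dots, \xi_2^{k}$) range over intervals of length $\sim\epsilon$, while $\gamma_{\bfr,0}(0;x)$ itself is free over an interval of length $\sim 1$ (this accounts for the remaining degree of freedom; note the total count is $1 + \sum_{k'=1}^{k-1}(k-k') + (k+1) = \binom{k+2}{2} - 1 = N$). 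Multiplying the side lengths gives a volume $\gtrsim_k \epsilon^{N-1}\lambda^{k + \sum_{k'=1}^{k-1} k' \cdot 1 \cdot (\text{one factor per coefficient})}$; carefully, the exponent of $\lambda$ is $k + \sum_{k'=1}^{k-1} k'(k-k')$, and one checks $k + \sum_{k'=1}^{k-1}k'(k-k') = k(k+1)(k+2)/6$ by a direct summation identity. Since $\epsilon$ depends only on $k$, this is the desired bound \eqref{210104e6_5}.

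\textbf{Disjointness.} Suppose $x \in \Omega_{\bfr_1} \cap \Omega_{\bfr_2}$ with $\bfr_1 = (r_1, r_1')$, $\bfr_2 = (r_2, r_2')$. The key rigidity input is that the top coefficient is pinned: $\gamma_{\bfr,k}(\xi_2;x) = x_1 \gtrsim \lambda^k$ for both $\bfr$, while all lower-order coefficients $\gamma_{\bfr,k'}$ are forced to be small, of size $\le \epsilon\lambda^{k'}$ for $1\le k' \le k-1$ and $\gamma_{\bfr,0}$ nearly constant. Writing $u_j = \xi_1 + \theta_{r_j}\xi_2 - w_{r_j'}$, the two expansions \eqref{210206e5_3} both represent the \emph{same} polynomial $P(\xi;x)$. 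I would extract a contradiction by comparing the ``near-degeneracy locus'' of $P(\cdot;x)$ as seen from the two coordinate systems. Concretely: freezing $\xi_2 = 0$, the univariate polynomial $\xi_1 \mapsto P((\xi_1,0);x)$ equals $\sum_{k'} \gamma_{\bfr_j,k'}(0;x)(\xi_1 - w_{r_j'})^{k'}$; its leading term is $x_1(\xi_1 - w_{r_j'})^k$ and all lower terms are small relative to $x_1$, so this polynomial is extremely close (after dividing by $x_1$) to $(\xi_1 - w_{r_j'})^k$ on, say, $[0,1]$, uniformly for small $\epsilon$. Hence $w_{r_1'}$ and $w_{r_2'}$ must both approximate the unique real point where $\xi_1 \mapsto P((\xi_1,0);x)/x_1$ is within $\epsilon'$ of having a $k$-fold near-zero — forcing $|w_{r_1'} - w_{r_2'}| \lesssim \epsilon^{c}$ for some $c = c(k) > 0$, which since the $w_r$ are $1/(2\lambda)$-separated and $\epsilon$ is a fixed small constant (not depending on $\lambda$) can only happen if $r_1' = r_2'$. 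Then, repeating the argument with $\xi_2$ not frozen — comparing the coefficient of $\xi_2$ times $(\xi_1 - w_{r'})^{k-1}$, which encodes $\theta_r$ through the cross term coming from $(\xi_1 + \theta_r\xi_2 - w_{r'})^k$ — forces $\theta_{r_1} = \theta_{r_2}$, i.e. $r_1 = r_2$. This gives $\bfr_1 = \bfr_2$, proving the sets are pairwise disjoint.

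\textbf{Main obstacle.} The volume computation is routine linear algebra plus a summation identity, so the real work is in the disjointness argument: making precise the claim that the smallness conditions defining $\Omega_{\bfr}$ pin down the pair $(\theta_r, w_{r'})$ up to an error $o_{\epsilon}(1)$ that is \emph{uniform in $\lambda$}. The delicate point is getting quantitative control — one needs a lower bound, depending only on $k$, on how much two distinct ``centers'' $(\theta, w)$ and $(\theta', w')$ change the small-coefficient data of a degree-$k$ polynomial with leading coefficient $\sim\lambda^k$; equivalently, a statement that the affine reparametrizations $\xi_1 \mapsto \xi_1 + \theta\xi_2 - w$ act ``rigidly'' on the subspace of polynomials with tiny lower coefficients. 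I expect this to follow from expanding $(\xi_1 + \theta\xi_2 - w)^k$ via the binomial/multinomial theorem and tracking the coefficients of $\xi_1^{k-1}$ and $\xi_1^{k-1}\xi_2$, but care is needed to ensure the resulting inequalities are genuinely $\lambda$-free, which is exactly why the normalization $x_1 \in [\lambda^k, (2\lambda)^k]$ and the rescaled smallness thresholds $\epsilon\lambda^{k'}$ were chosen.
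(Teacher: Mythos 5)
Your overall strategy for the volume bound (a triangular/unipotent linear change of coordinates from $x$ to the coefficients of the $\gamma_{\bfr,k'}$, then multiplying side lengths of the resulting box) is the same as the paper's, but your bookkeeping is wrong in a way that breaks the computation. The polynomial $\gamma_{\bfr,k'}(\cdot;x)$ has degree $k-k'$ in $\xi_2$ and hence $k-k'+1$ coefficients, not $k-k'$; likewise $\gamma_{\bfr,0}$ has $k$ non-constant coefficients, not $k+1$; and $\gamma_{\bfr,0}(0;x)$ is not a free parameter at all (setting $\xi=(0,0)$ in \eqref{210206e5_3} and using $P(0;x)=0$ shows it is determined by the constant terms of the other $\gamma_{\bfr,k'}$). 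Your dimension count $1+\sum_{k'=1}^{k-1}(k-k')+(k+1)=N$ fails already for $k=3$, and the asserted identity $k+\sum_{k'=1}^{k-1}k'(k-k')=k(k+1)(k+2)/6$ is false for every $k\ge 2$ (for $k=2$ the left side is $3$, the right side is $4$). The correct count, as in the paper, is $\sum_{k'=0}^{k}k'(k-k'+1)=k(k+1)(k+2)/6$. This is a fixable arithmetic error, but as written your proof of \eqref{210104e6_5} does not close.

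The disjointness argument has a genuine logical gap. You derive (after dividing by $x_1$) a bound of the form $\abs{w_{r_1'}-w_{r_2'}}\lesssim\epsilon^{c}$ with $c=c(k)$, uniformly in $\lambda$, and conclude that since $\epsilon$ is a fixed small constant this forces $r_1'=r_2'$. But distinct $w_{r'}$ are only $1/(2\lambda)$-separated, and $1/(2\lambda)\ll\epsilon^{c}$ once $\lambda$ is large, so a $\lambda$-free bound on $\abs{w_{r_1'}-w_{r_2'}}$ cannot distinguish two adjacent choices of $w$. The point you are missing is exactly the amplification by the leading coefficient: the paper compares the coefficient of $\xi_1^{k-1}$ (resp.\ $\xi_1^{k-1}\xi_2$) in $x_1(\xi_1+\theta_{r_1}\xi_2-w_{r_1'})^k-x_1(\xi_1+\theta_{r_2}\xi_2-w_{r_2'})^k$, which equals $kx_1(w_{r_2'}-w_{r_1'})$ (resp.\ $kx_1(\theta_{r_1}-\theta_{r_2})$) and hence has size at least $\sim\lambda^{k}\cdot\lambda^{-1}=\lambda^{k-1}$ whenever $\bfr_1\neq\bfr_2$, whereas membership in $\Omega_{\bfr_1}\cap\Omega_{\bfr_2}$ forces every non-constant coefficient of that difference to be $O_k(\epsilon\lambda^{k-1})$ — a contradiction for $\epsilon$ small depending only on $k$. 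In your normalization the perturbations of the monic polynomial are actually of size $\epsilon\lambda^{k'-k}\le\epsilon/\lambda$, so the salvageable conclusion is $\abs{w_{r_1'}-w_{r_2'}}\lesssim\epsilon/\lambda<1/(2\lambda)$; without tracking that extra factor of $\lambda^{-1}$, your argument proves nothing.
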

\begin{lem}\label{210104lem6_2}
For each $\bfr$ and each $x\in \Omega_{\bfr}$, it holds that
\begin{equation} \label{eq:E1-lower-bd}
\Big|\int_{[0, 1]^2} e(P(\xi; x))d\xi\Big|\gtrsim \lambda^{-1},
\end{equation}
provided that $\epsilon$ is chosen to be sufficiently small, depending only on $k$.
\end{lem}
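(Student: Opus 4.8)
The plan is to show that on $\Omega_{\bfr}$ the phase $P(\xi;x)$, written in the rotated-and-shifted coordinates $u = \xi_1 + \theta_r\xi_2 - w_{r'}$, $\xi_2$, is well approximated by a single term $x_1 u^k$ up to a $\xi_2$-independent affine piece, so that the integral $\int e(P)$ essentially factors and reduces to a one-dimensional oscillatory integral $\int_{0}^{1} e(\gamma_{\bfr,k}\, u^k + \text{lower order}) du$ over an interval of length $\sim 1$ on which $\gamma_{\bfr,k} = x_1 \in [\lambda^k, (2\lambda)^k]$. First I would perform the linear change of variables $(\xi_1,\xi_2) \mapsto (u,\xi_2)$ with $u = \xi_1 + \theta_r \xi_2 - w_{r'}$, which has Jacobian $1$ and maps $[0,1]^2$ onto a parallelogram of area $1$ that, since $|\theta_r| \le 1/100$ and $1/4 \le w_{r'} \le 3/4$, contains a fixed axis-parallel box $[a,b] \times [c,d]$ with $b - a, d - c \gtrsim 1$, say $u$ ranging over an interval of length $\gtrsim 1/4$ for $\xi_2$ in an interval of length $\gtrsim 1/2$; I will only integrate over this sub-box and estimate the contribution of the rest (which has comparable measure but the phase is large there) separately, or more simply choose the box small enough, e.g. $u \in [-1/8, 1/8]$, $\xi_2 \in [1/4,3/4]$, to be genuinely inside the image.

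Next, from the definition \eqref{210206e5_3} we have $P(\xi;x) = \sum_{k'=0}^{k} \gamma_{\bfr,k'}(\xi_2;x) u^{k'}$, and on $\Omega_{\bfr}$ the bounds $\|\gamma_{\bfr,k'}(\cdot;x)\| \le \epsilon\lambda^{k'}$ for $1 \le k' \le k-1$ give $|\gamma_{\bfr,k'}(\xi_2;x)| \le \epsilon \lambda^{k'}$ for all $\xi_2 \in [0,1]$; hence for $|u| \le 1/8$ the sum of these middle terms is bounded by $\sum_{k'=1}^{k-1} \epsilon\lambda^{k'}(1/8)^{k'} \lesssim_k \epsilon \lambda^{k-1}$. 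Also $\gamma_{\bfr,k} = x_1 \ge \lambda^k$, so the top term $x_1 u^k$ is the dominant one when $|u|$ is comparable to but not too small relative to $1$. The idea is then standard: write $e(P(\xi;x)) = e(\gamma_{\bfr,0}(0;x)) \cdot e\big(x_1 u^k + R(u,\xi_2;x)\big)$ where $R$ collects the terms $\gamma_{\bfr,0}(\xi_2;x) - \gamma_{\bfr,0}(0;x)$ (bounded by $\epsilon$ in absolute value by the last condition in \eqref{210206e5_4}) together with $\sum_{1\le k' \le k-1}\gamma_{\bfr,k'}u^{k'}$. I would then integrate first in $u$: by van der Corput's lemma (the $k$-th derivative version, since $\partial_u^k(x_1 u^k) = k!\, x_1 \gtrsim_k \lambda^k$ while the $k$-th derivative of the remainder $R$ in $u$ is $O(\epsilon)$, hence $\partial_u^k$ of the total phase is still $\gtrsim_k \lambda^k$ for $\epsilon$ small) one gets $\big|\int e(2\pi(x_1 u^k + R))\, du\big| \lesssim_k (\lambda^k)^{-1/k} = \lambda^{-1}$ — but this is an \emph{upper} bound, which is the wrong direction.

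So instead the correct route is to \emph{rescale}: substitute $u = \lambda^{-1} v$, so that $x_1 u^k = (x_1 \lambda^{-k}) v^k$ with $x_1\lambda^{-k} \in [1, 2^k]$ of size $\sim 1$, and $v$ ranges over an interval of length $\sim \lambda^{-1}\cdot\lambda = $ wait — $u \in [-1/8,1/8]$ gives $v \in [-\lambda/8,\lambda/8]$, which is long, not short. That is not the right scaling either. The actual mechanism, which I now believe is the intended one, is the following: one does \emph{not} expect cancellation, because the whole point of $\Omega_{\bfr}$ is that the phase is essentially \emph{stationary} on a set of size $\sim \lambda^{-1}$. Concretely, $\mu \le P(\xi;x) \le \mu+1$ on a large set. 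Here is the clean version. Restrict to $|u| \le c\lambda^{-1}$ and $\xi_2 \in [1/4,3/4]$: this set has measure $\sim \lambda^{-1}$. On it, $|x_1 u^k| \le x_1 (c\lambda^{-1})^k \le (2\lambda)^k c^k \lambda^{-k} = (2c)^k$, which is $O(1)$ and can be made $\le 1/100$ by choosing $c$ small (depending on $k$); the middle terms contribute $\sum \epsilon\lambda^{k'}(c\lambda^{-1})^{k'} \le \epsilon \sum (c)^{k'} \lesssim \epsilon$; and $|\gamma_{\bfr,0}(\xi_2;x) - \gamma_{\bfr,0}(0;x)| \le \epsilon$. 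Hence on this set of measure $\sim \lambda^{-1}$, we have $|P(\xi;x) - \gamma_{\bfr,0}(0;x)| \le 1/100 + C_k\epsilon \le 1/10$ for $\epsilon$ small, so $e(P(\xi;x))$ stays within a small arc of $e(\gamma_{\bfr,0}(0;x))$, i.e. $\mathrm{Re}\big(\overline{e(\gamma_{\bfr,0}(0;x))} e(P(\xi;x))\big) \ge \cos(\pi/5) > 1/2$. Therefore $\big|\int_{[0,1]^2} e(P)\big| \ge \big|\int_{|u|\le c\lambda^{-1},\, \xi_2 \in [1/4,3/4]} e(P)\big| - (\text{contribution of the complement})$. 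The first term is $\ge \frac12 \cdot \lambda^{-1} \cdot \frac12 \gtrsim \lambda^{-1}$ after transferring back through the area-preserving change of variables.

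The main obstacle is thus the second term: the contribution of $\xi \in [0,1]^2$ \emph{outside} the small box. One cannot just bound it by the measure, since that is $\sim 1 \gg \lambda^{-1}$. To handle it I would integrate in $u$ first for each fixed $\xi_2$ and apply van der Corput to the full phase $u \mapsto P(\xi;x)$, whose $k$-th $u$-derivative is $k!\,x_1 + O(\epsilon) \gtrsim_k \lambda^k$ uniformly; van der Corput then gives $\big|\int_{\{u : (u,\xi_2)\in[0,1]^2\}} e(P)\, du\big| \lesssim_k \lambda^{-1}$ for \emph{every} $\xi_2$, hence $\big|\int_{[0,1]^2} e(P)\big| \lesssim_k \lambda^{-1}$ as well — good for the upper direction but again not what we need. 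The resolution: split $\int_{[0,1]^2} e(P)\,du\,d\xi_2$ in the $u$ variable into the central piece $|u| \le c\lambda^{-1}$ (handled above, giving a main term $\gtrsim_k \lambda^{-1}$ with a \emph{fixed sign direction} after multiplying by $\overline{e(\gamma_{\bfr,0}(0;x))}$) and the outer piece, and on the outer piece use van der Corput with the lower-order derivative in $u$ together with the fact that there $|u|$ is bounded below, so that the relevant derivative is large and the outer integral is $\le \frac12 \cdot (\text{main term})$ once $c$ is small enough and $\lambda$ large; I expect the book-keeping of which derivative to use on the annuli $|u| \sim 2^j c \lambda^{-1}$ and summing a geometric series in $j$ to be the fiddly part, but it is routine. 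Putting the two pieces together yields $\big|\int_{[0,1]^2} e(P(\xi;x))\big| \gtrsim_k \lambda^{-1}$, completing the proof. Throughout, $\epsilon$ is chosen small depending only on $k$, exactly as in the statement, and $\lambda$ is assumed large (which is harmless since for bounded $\lambda$ the estimate $\gtrsim \lambda^{-1}$ reduces to non-vanishing of the integral on a nonempty open set, which follows from the same local argument).
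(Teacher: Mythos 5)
Your reduction to a one--dimensional integral in $u=\xi_1+\theta_r\xi_2-w_{r'}$ for each fixed $\xi_2$, and your treatment of the outer $\xi_2$--integration (the values stay in a narrow cone because $\abs{\gamma_{\bfr,0}(\xi_2;x)-\gamma_{\bfr,0}(0;x)}\le\epsilon$), both match the paper. But the core of your argument --- ``near-stationary main term of size $\sim c\lambda^{-1}$ plus an outer piece controlled by van der Corput'' --- has a genuine gap that you half-flag as ``fiddly but routine'' and that is in fact fatal. On the central region $\abs{u}\le c\lambda^{-1}$ you need $(2c)^k\le 1/100$, so the main term is at most $\sim c\lambda^{-1}$ with $c\le\tfrac12\cdot 100^{-1/k}$. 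On the outer annuli $\abs{u}\sim 2^jc\lambda^{-1}$, first-derivative van der Corput gives at best $O_k(\lambda^{-k}(2^jc\lambda^{-1})^{-(k-1)})$, and summing the geometric series yields $O_k(\lambda^{-1}c^{-(k-1)})$. The ratio of this bound to the main term is $\gtrsim c^{-k}\gtrsim 2^k\cdot 100$, so the outer piece can never be absorbed as an error no matter how $c$ is tuned within its allowed range. This is not an artifact of lossy estimates: for the model phase $x_1u^k$ the outer contribution equals $\lambda^{-1}\bigl(\int_{c}^{\lambda}+\int_{-\lambda}^{-c}\bigr)e\bigl((x_1\lambda^{-k})v^k\bigr)dv$, which genuinely converges to a nonzero constant times $\lambda^{-1}$ --- the same order as your main term. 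A ``stationary set'' lower bound cannot see the integral's true leading constant.

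What the paper uses instead, and what your argument is missing, is the precise asymptotic for the inner integral: Lemma~\ref{210121lem5_3} (Ikromov) gives $\int_a^b e(q)\,du = c(\alpha)\alpha_k^{-1/k}+O(A^{1-k})$ with $c(\alpha)=c+o(\epsilon)$ and $c\ne 0$, so for every fixed $\xi_2$ the inner integral has magnitude $\sim\lambda^{-1}$ \emph{and} lies in a fixed cone of angle $<\pi$ bounded away from $0$ (with a separate Fresnel-integral computation when $k=2$). Only then does integrating in $\xi_2$, using the $O(\epsilon)$ phase modulation, produce the lower bound. To repair your proof you would need to replace the van der Corput step on the outer piece by an actual evaluation of $\int e(v^k)\,dv$ (or invoke Lemma~\ref{210121lem5_3} directly), at which point the central/outer split becomes unnecessary.
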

The desired lower bound follows immediately from these two lemmas.
\begin{rem}\label{210216rem}
Lemma \ref{210104lem6_1} and Lemma \ref{210104lem6_2} also immediately lead to the sharpness of Corollary \ref{coro1_3}. Indeed, they imply that the bound \eqref{201219e1_11} also fails whenever $p\le p_{d, k}$; in particular, the endpoint case $p=p_{d, k}$ is also included.
\end{rem}

In the end, we will prove Lemma \ref{210104lem6_1} and Lemma \ref{210104lem6_2}. We will start with the latter one. Its proof relies on the following lemma in Ikromov \cite{MR1636721}, a slightly different form of which also already appeared in Arkhipov, Chubarikov and Karatsuba \cite{MR552548, MR2113479}.
\begin{lem}[\cite{MR1636721}]\label{210121lem5_3}
Let $a<-10^{-2}$ and $b>10^{-2}$. Suppose that the polynomial $q(x)=\alpha_k x^k+\dots+ \alpha_1 x$ satisfies
\begin{equation}
A^k\le \alpha_k\le (2A)^k, \ |\alpha_{k-1}|\le \epsilon A^{k-1}, \dots, |\alpha_1|\le \epsilon A,
\end{equation}
for some large $A$ and small $\epsilon$. Then we have the following asymptotic representation
\begin{equation}
\int_a^b e^{2\pi i q(x)}dx=\frac{c(\alpha)}{\alpha_k^{1/k}}+ O(A^{1-k})  \text{ as } A\to \infty,
\end{equation}
where $c(\alpha)$ is a constant depending on $\alpha_k , \dotsc, \alpha_1$ satisfying
\begin{equation}
c(\alpha)=c+o(\epsilon) \text{ as } \epsilon\to 0,
\end{equation}
with $c\neq 0$ depending only on $k$.
\end{lem}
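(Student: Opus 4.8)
The plan is to reduce the integral, by a linear rescaling in $x$, to a normalized oscillatory integral whose phase has leading coefficient $1$ and all lower coefficients of size $O(\epsilon)$, and then to recognize it as a complete oscillatory integral over $\R$ up to two tails that are negligible because, after rescaling, $[a,b]$ is essentially all of $\R$. Concretely, since $\alpha_k\ge A^k>0$ the positive root $\alpha_k^{1/k}\in[A,2A]$ is well defined, and substituting $y=\alpha_k^{1/k}x$ gives
\[
\int_a^b e^{2\pi i q(x)}\,dx = \alpha_k^{-1/k}\int_{a'}^{b'} e^{2\pi i\psi(y)}\,dy,
\qquad
\psi(y) := y^k + \sum_{j=1}^{k-1}\beta_j y^j,
\]
where $\beta_j:=\alpha_j\alpha_k^{-j/k}$ satisfies $\abs{\beta_j}\le \epsilon A^j A^{-j}=\epsilon$, and $a'=\alpha_k^{1/k}a\le -10^{-2}A$, $b'=\alpha_k^{1/k}b\ge 10^{-2}A$. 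I would then set $c(\alpha):=\int_{\R} e^{2\pi i\psi(y)}\,dy$ (an improper integral) and aim to prove (i) this integral converges, and (ii) $\int_{a'}^{b'} e^{2\pi i\psi}\,dy = c(\alpha)+O_k(A^{1-k})$; multiplying back by $\alpha_k^{-1/k}\lesssim A^{-1}$ then gives the claimed expansion, in fact with the better error $O_k(A^{-k})\subseteq O_k(A^{1-k})$ and an $O$-constant depending only on $k$ (uniformly in $a<-10^{-2}$, $b>10^{-2}$).

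For (i) and (ii) I would use van der Corput's first-derivative test. Fixing $\epsilon$ small depending only on $k$, for $\abs{y}\ge 1$ one has $\abs{\psi'(y)}=\abs{ky^{k-1}+\sum_{j<k}j\beta_j y^{j-1}}\ge \tfrac{k}{2}\abs{y}^{k-1}$ with $\psi'$ nonvanishing there, and $\abs{\psi''(y)/\psi'(y)^2}\lesssim_k \abs{y}^{-k}$; one integration by parts then yields $\abs{\int_R^\infty e^{2\pi i\psi}}+\abs{\int_{-\infty}^{-R} e^{2\pi i\psi}}\lesssim_k R^{1-k}$ for every $R\ge 1$, uniformly over the admissible $\beta$. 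Letting $R\to\infty$ gives (i), and taking $R=10^{-2}A$ gives (ii). The point worth emphasizing is that these bounds are uniform over the whole family of polynomials $q$, which is exactly what the normalization $\abs{\beta_j}\le\epsilon$ secures.

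It remains to analyze $c(\alpha)$. When all $\beta_j=0$, the substitution $w=2\pi\abs{y}^k$ evaluates $c(\alpha)$ to an explicit multiple of $\Gamma(1/k)$ times $\cos(\pi/(2k))$ (respectively $e^{i\pi/(2k)}$ when $k$ is even), which is nonzero for every $k\ge 2$; call this value $c$. For general admissible $\beta$ I would write $c(\alpha)-c=\int_{\R}\bigl(e^{2\pi i\psi(y)}-e^{2\pi i y^k}\bigr)\,dy$, split at $\abs{y}=M$, bound the integrand pointwise by $\lesssim_M\epsilon$ on $\abs{y}\le M$ and bound each of the two pieces by $O_k(M^{1-k})$ on $\abs{y}>M$ using the tail estimate above, and optimize in $M$ to conclude that $\abs{c(\alpha)-c}\to 0$ as $\epsilon\to 0$, uniformly in $\alpha$; in particular $c(\alpha)\ne 0$ once $\epsilon$ is small enough, which is the last assertion of the lemma. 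The main obstacle is not any single estimate — the lemma is essentially classical — but rather the bookkeeping needed to keep every oscillatory bound uniform over the family, together with the fact that for odd $k$ the integrals over $\R$ are only conditionally convergent, so the argument must proceed through integration by parts rather than absolute integrability.
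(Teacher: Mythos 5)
The paper does not prove this lemma at all: it is quoted verbatim from Ikromov \cite{MR1636721} (with the remark that a variant already appears in Arkhipov--Chubarikov--Karatsuba), so there is no in-paper argument to compare against. Your self-contained proof is correct and is essentially the standard route one would expect the cited source to take: rescale by $\alpha_k^{1/k}$ to normalize the leading coefficient, note that the rescaled interval $[a',b']$ covers $[-10^{-2}A,10^{-2}A]$, complete the integral to all of $\R$, and control the two tails by a first-derivative/non-stationary-phase bound that is uniform over $\abs{\beta_j}\le\epsilon$ because $\abs{\psi'(y)}\gtrsim_k\abs{y}^{k-1}$ for $\abs{y}\ge 1$ once $\epsilon\le 1/(2k)$. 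The bookkeeping all checks out: the tails contribute $O_k(A^{1-k})$ before multiplying back by $\alpha_k^{-1/k}\le A^{-1}$, so you even get the error $O_k(A^{-k})$, which is stronger than stated; the Fresnel-type evaluation of $c$ is nonzero for every $k\ge 2$; and the split-at-$M$ comparison gives $\abs{c(\alpha)-c}\lesssim_k \epsilon M^{k}+M^{1-k}$, hence $o(1)$ uniformly in $\alpha$. The one discrepancy worth flagging is that you prove $c(\alpha)=c+o(1)$ rather than the literal $c(\alpha)=c+o(\epsilon)$ of the statement; the literal version is in fact generically false (for $k=3$ the first variation $2\pi i\int_{\R}\beta_j y^{j}e^{2\pi i y^{k}}dy$ is a nonzero linear functional of $\beta$), so the notation in the lemma must be read as ``$o(1)$ as $\epsilon\to 0$,'' and the $o(1)$ version is exactly what the application in Lemma \ref{210104lem6_2} requires, namely that $c(\alpha)$ is bounded away from $0$ for $\epsilon$ small.
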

\begin{proof}[Proof of Lemma \ref{210104lem6_2}]
Let us write our integral as
\begin{equation}
\int_{[0, 1]^2} e\Big(\sum_{0\le k'\le k} \gamma_{\bfr, k'}(\xi_2; x)(\xi_1+\theta_r \xi_2-w_{r'})^{k'}\Big)d\xi.
\end{equation}
After the change of variable
\begin{equation}
\xi_1+\theta_r\xi_2-w_{r'}\mapsto \tilde{\xi}_1
\end{equation}
for each fixed $\xi_{2}$, the integral becomes
\begin{equation}
e(\gamma_{\bfr, 0}(0; x)) \int_0^1 e(\gamma_{\bfr, 0}(\xi_2; x)-\gamma_{\bfr, 0}(0; x)) \int_{\theta_r\xi_2-w_{r'}}^{1+\theta_r \xi_2-w_{r'}} e\Big(\sum_{0< k'\le k} \gamma_{\bfr, k'}(\xi_2; x) \tilde{\xi}_1^{k'}\Big)d\tilde{\xi}_1 d\xi_2.
\end{equation}
In the case $k\geq 3$, since $\gamma_{\bfr, k}(\xi_2; x) = x_{1}$, Lemma~\ref{210121lem5_3} gives a uniform lower bound for the inner integral, provided that $\epsilon$ is chosen to be sufficiently small.
In the case $k=2$, we complete squares in the phase and use the fact that the Fresnel integral $\int_{0}^{u} \cos(\xi^{2}) d\xi$ is bounded below by a positive constant as soon as $u>0$ is bounded away from $0$, while the Fresnel integral $\int_{0}^{u} \sin(\xi^{2}) d\xi$ is bounded above.

In both cases, the value of the inner integral is contained in a cone in $\C$ with angle $<\pi$ and bounded away from $0$.
Using the upper bound $\abs{\gamma_{\bfr, 0}(\xi_2; x)-\gamma_{\bfr, 0}(0; x)} \lesssim \epsilon$, we can complete the proof of \eqref{eq:E1-lower-bd}.
\end{proof}
\begin{proof}[Proof of Lemma \ref{210104lem6_1}]
Regarding the (elementary) lower bound \eqref{210104e6_5}, we first mention that this has also been observed by Ikromov in the case $\theta_r=0$, see \cite[page 183]{MR1636721}.
To show \eqref{210104e6_5}, note that the coefficient of $\xi_{2}^{l}$ in $\gamma_{\bfr, k'}(\xi_{2},x)$ equals $x_{k',l}$ plus some function of $\theta,w$ and the components $x_{k'',l}$ with $k' < k'' \leq k$.

Hence, for each $k'=k,\dotsc,0$, once $x_{k'',l}$ have been chosen for all $k''>k'$ and all $l$, we can choose each $x_{k',l}$ with $l \in \Set{0,\dotsc,k-k'}$ in an interval of length $\sim \lambda^{k'}$.
The measure of the set of all $x$ that we can choose is then proportional to $\lambda$ to the power
\[
\sum_{k'=0}^{k} k'(k-k'+1) = \frac{1}{6} k(k+1)(k+2).
\]

In the end, we show that $\Omega_{\bfr_1}\cap \Omega_{\bfr_2}=\emptyset$ whenever $\bfr_1\neq \bfr_2$. We argue by contradiction. Let $\bfr_1\neq \bfr_2$ and assume that there exists an $x\in \Omega_{\bfr_1}\cap \Omega_{\bfr_2}$. We now expand $P(\xi; x)$ in the form of \eqref{210206e5_3} with $(\theta_{r_1}, w_{r'_1})$ and $(\theta_{r_2}, w_{r'_2})$ separately. By only considering the first term in these two expansions, we obtain
\begin{equation}
x_1(\xi_1+\theta_{r_1}\xi_2-w_{r'_1})^k+\dots\equiv    x_1(\xi_1+\theta_{r_2}\xi_2-w_{r'_2})^k+\dots
\end{equation}
as functions depending on $\xi_1$ and $\xi_2$. Recall the definition \eqref{210206e5_4}. We will arrive at a contradiction if we can show that the following polynomial in $\xi_1$ and $\xi_2$
\begin{equation}\label{210207e5_15}
x_1(\xi_1+\theta_{r_1}\xi_2-w_{r'_1})^k-    x_1(\xi_1+\theta_{r_2}\xi_2-w_{r'_2})^k
\end{equation}
has a coefficient of a non-constant term with absolute value $\gtrsim \lambda^{k-1}$.

The monomial $\xi_1^{k-1}\xi_2$ has the coefficient $k x_1(\theta_{r_1}-\theta_{r_2})$, and the monomial $\xi_1^{k-1}$ has the coefficient $k x_1(w_{r_1'}-w_{r_2'})$.
At least one of these coefficients has absolute value $\gtrsim |x_{1}| / \lambda \sim \lambda^{k-1}$.
\end{proof}

\section{Proof of Corollary \ref{coro1_3}}\label{sectionrestr}

Fix $d=2$, $k\ge 2$ and $f$ with $\|f\|_{\infty}=1$. We abbreviate $\bar{p}_{2, k}$ to $\bar{p}_k$. Let $\mu$ be the measure supported on the surface $S_{2, k}$ whose projection to $\R^2$ is given by $f(\xi)d\xi$, and let $\mu_0$ be that given by  $\one_{[0, 1]^2}(\xi)d\xi$. We can therefore write
\begin{equation}
E^{(2, k)}_{[0, 1]^2}f=\widehat{\mu} \text{ and } E^{(2, k)}_{[0, 1]^2}1=\widehat{\mu_0}.
\end{equation}
Let $\chi: \R^N\to \R$ be a non-negative smooth bump function supported on $[-1, 1]^N$ satisfying that $\widehat{\chi}$ is non-negative and  $\widehat{\chi}(x)\gtrsim_N 1$ for every $|x|\le 1$.  Define $\chi_R(\xi):=R^N\chi(R\xi)$.
Recall that $\bar{p}_{k}$ is the smallest even number $\ge p_{2, k}$.  As a consequence of Theorem \ref{main_tarry} and H\"older's inequality, we obtain that for every $\epsilon>0$ and $R\ge 1$, it holds that
\begin{equation}
\|\widehat{\mu_0}\|_{L^{\bar{p}_{k}}(B_R)}\lesim_{\epsilon} R^{\epsilon},
\end{equation}
where $B_R\subset \R^N$ is an arbitrary ball of radius $R$. We will show that
\begin{equation}\label{210207e6_3}
\|\widehat{\mu}\|_{L^{\bar{p}_{k}}(B_R)}\lesim_{\epsilon} R^{\epsilon}.
\end{equation}
To prove this estimate, we first observe that, by modulating the function $f$, which does not change its $L^{\infty}$ norm, it suffices to consider the case that $B_R$ is centered at the origin. In this case, we have
\begin{equation}
\|\widehat{\mu}\|_{L^{\bar{p}_{k}}(B_R)}^{\bar{p}_k/2}\lesim \|\widehat{\mu}\cdot \widehat{\chi_R}\|_{L^{\bar{p}_{k}}(\R^N)}^{\bar{p}_k/2}= \big\|(\widehat{\mu}\cdot \widehat{\chi_R})^{\bar{p}_k/2}\big\|_2.
\end{equation}
We apply Plancherel's theorem to the last term and obtain
\begin{equation}
= \big\|(\mu*\chi_R)*\dots* (\mu*\chi_R)\big\|_2\le \big\|(\mu_0*\chi_R)*\dots* (\mu_0*\chi_R)\big\|_2.
\end{equation}
We apply Plancherel's theorem back and obtain \eqref{210207e6_3}. In the end, to pass from the local estimate \eqref{210207e6_3} to the desired global estimate in the corollary, one can follow Tao's epsilon-removal lemma in \cite{MR1666558} and its variants in Bourgain and Guth  \cite{MR2860188} and Kim  \cite{Kim2017SomeRO}.


\bibliographystyle{alpha}
\bibliography{reference.bib}

\end{document}